\newtheorem{thm}{Theorem}[section]
\newtheorem{cor}[thm]{Corollary}
\newtheorem{lem}[thm]{Lemma}
\newtheorem{pro}[thm]{Proposition}
\theoremstyle{definition}
\newtheorem{eg}[thm]{Example}
\newtheorem{de}[thm]{Definition}
\newtheorem*{re}{Remark}
\newcommand{\excise}[1]{}
\DeclareMathOperator{\diam}{diam}
\DeclareMathOperator{\post}{post}
\def\C{\mathbb{C}}
\def\Z{\mathbb{Z}}
\def\S{\mathbb{S}}
\def\N{\mathbb{N}}
\def\G{\mathcal{G}}
\def\T{\mathcal{T}}
\def\D{\mathcal{D}}
\def\:{\colon}
\def\ra{\rightarrow}
\title{Thurston maps and asymptotic upper curvature}
\author{Qian Yin}
\thanks{The author was partially supported by NSF grants  DMS 0757732, DMS 0353549, DMS 0456940, DMS 0652915, DMS 1058772, and DMS 1058283.}
\address{Qian Yin \\The University of Chicago\\ 5734 S. University Avenue\\ Chicago, Illinois 60637\\USA}
\email{qyin@math.uchicago.edu}
\begin{document}

\begin{abstract}
A Thurston map is a branched covering map from $\S^2$ to $\S^2$ with a finite postcritical set. We associate a natural Gromov hyperbolic graph $\G=\G(f,\mathcal C)$ with an expanding Thurston map $f$ and a Jordan curve $\mathcal C$ on $\S^2$ containing $\post(f)$. The boundary at infinity of $\G$ with associated visual metrics can be identified with $\S^2$ equipped with the visual metric induced by the expanding Thurston map $f$. We define asymptotic upper curvature of an expanding Thurston map $f$ to be the asymptotic upper curvature of the associated Gromov hyperbolic graph, and establish a connection between the asymptotic upper curvature of $f$ and the entropy of $f$.
\end{abstract}

\maketitle

\tableofcontents

\excise{
\section{History}
\noindent
The \emph{Julia set} $J(f)$ of $f$ is the closure of the set of repelling periodic points. It is also the smallest closed set containing at least three points which is completely invariant under $f^{-1}$. For the example $f(z)=z^2$, the Julia set of $f$ is the unit circle. The complement $F(f)=\widehat\C\setminus J(f)$ of the Julia set, called the \emph{Fatou set}, is the largest open set such that the iterates of $f$ restricted to it form a normal family. The Julia set and Fatou set are both invariant under $f$ and $f^{-1}$.

The \emph{postcritical set} $\post(f)$ of $f$ is the closure of the forward orbits of the critical points
\begin{eqnarray*}
\post(f)=\overline{\bigcup_{n\geq 1}\{f^n(c)\: c\in {\rm crit}(f)\}}.
\end{eqnarray*}
The postcritical set plays a crucial role in terms of understanding the expanding and contracting features of a rational map.
If the postcritical set $\post(f)$ is finite, we say that the map $f$ is \emph{postcritically finite}. In the postcritically finite case,
\begin{eqnarray*}
\post(f)=\bigcup_{n\geq 1}\{f^n(c)\: c\in {\rm crit}(f)\}.
\end{eqnarray*}

In 1918, Samuel Latt\`es described a special class of rational maps which have a simultaneous linearization for all of their periodic points (see \cite{LatSur}). This class of maps is named after Latt\`es, even though similar examples had been studied by Ernst Schr\"oder  much earlier (see \cite{SchUeber}). A \emph{Latt\`es map} $f\:\widehat\C\ra \widehat\C$ is a rational map that is obtained from a finite quotient of a conformal torus endomorphism, i.e., the map $f$ satisfies the following commutative diagram:
\begin{equation}\label{lat}
    \begin{CD}
\T @>\bar{A}>> \T\\
@V\Theta VV @VV\Theta V\\
\widehat\C @>f>> \widehat\C
\end{CD}
\end{equation}
where $\bar A$ is a map of a torus $\T$ that is a quotient of an affine map of the complex plane, and $\Theta$ is a finite-to-one holomorphic map. Latt\`es maps were the first examples of rational maps whose Julia set is the whole sphere $\widehat \C$, and the postcritical set of a Latt\`es map is finite. More importantly, Latt\`es maps play a central role as exceptional examples in complex dynamics. We will discuss this further in the following section.
}

\section{Introduction}
\noindent
A \emph{rational map} $f\:\widehat\C \ra \widehat\C$ is a map on the Riemann sphere $\widehat\C=\C \cup \{\infty\}$ which can be written as a quotient of two relatively prime complex polynomials $p(z)$ and $q(z)$, with $q(z)\not=0$,
\begin{eqnarray}\label{rationalmap}
f(z)=\frac{p(z)}{q(z)}=
\frac{a_0z^{m}+\ldots+a_{m}}{b_0z^l+\ldots+b_l},
\end{eqnarray}
where $a_i,b_j \in \C$ for $i=0,\ldots,m$ and $j=0,\ldots, l$.
The \emph{postcritical set} $\post(f)$ of $f$ is defined to be the forward orbits of the critical points
\begin{eqnarray*}
\post(f)=\bigcup_{n\geq 1}\{f^n(c)\: c\in {\rm crit}(f)\}.
\end{eqnarray*}
If the postcritical set $\post(f)$ is finite, we say that the map $f$ is \emph{postcritically finite}.

Thurston introduced a topological analog of a postcritically finite rational map, now known as a \emph{Thurston map} (see \cite{DHThurston}). A \emph{Thurston map} $f\:\S^2\ra \S^2$ is a branched covering map with finite postcritical set $\post(f)$.
The notion of an expanding Thurston map was introduced in \cite{BMExpanding} as a topological analog of a postcritically finite rational map whose Julia set is the whole sphere $\widehat{\C}$. Roughly speaking, a Thurston map is called \emph{expanding} if all the connected components of the preimage under $f^{-n}$ of any open Jordan region disjoint from $\post(f)$ become uniformly small as $n$ tends to infinity. We refer the reader to Definition~\ref{expandingmap} for a more precise statement. A related and more general notion of expanding Thurston maps was introduced in \cite{HPCoarse}. Latt\`es maps are among the simplest examples of expanding Thurston maps.


Let $f$ be an expanding Thurston map, and let $\mathcal C$ be a Jordan curve containing $\post(f)$.
The Jordan Curve Theorem implies that $\S^2\setminus\mathcal C$ has precisely two connected components, whose closures we call \emph{$0$-tiles}. We call the closure of each connected component of the preimage of $\S^2\setminus\mathcal C$ under $f^n$ an \emph{$n$-tile}. In Section 5 of \cite{BMExpanding}, it is proved that the collection of all $n$-tiles gives a cell decomposition of $\S^2$.

Every expanding Thurston map $f\:\S^2\ra \S^2$ induces a natural class of metrics on $\S^2$, called \emph{visual metrics} (see Definition \ref{visual}), and each visual metric $d$ has an associated \emph{expansion factor} $\Lambda > 1$. This visual metric is essentially characterized by the geometric property that the diameter of an $n$-tile is about $\Lambda^{-n}$, and the distance between two disjoint $n$-tiles is at least about $\Lambda^{-n}$. The supremum of the expansion factors of all visual metrics is called the \emph{combinatorial expansion factor} $\Lambda_0$ (see \cite[Theorem 1.5]{BMExpanding}). For Latt\`es maps, the supremum is obtained. In general, the supremum is not obtained.

\excise{
--------------------------
The points in $\post(f)$ divide $\mathcal C$ into several subarcs.  Let $D_n=D_n(f,\mathcal{C})$ be the minimum number of $n$-tiles needed to join two of these subarcs that are non-adjacent (see Definition \ref{joinoppositesides} and \eqref{defdn}). Even though $D_n=D_n(f,\mathcal C)$ depends on the Jordan curve $\mathcal C$, its growth rate is independent of the $\mathcal C$. So the limit
\begin{equation}\label{comb}
    \Lambda_0(f)=\lim_{n\ra \infty}\big(D_n(f,\mathcal C)\big)^{1/n}
\end{equation}
exists and only depends on the map $f$ itself (see \cite[Prop.~17.1]{BMExpanding}). We call this limit $\Lambda_0(f)$ the \emph{combinatorial expansion factor} of $f$. This quantity $\Lambda_0(f)$ is invariant under topological conjugacy and multiplicative in the sense that $\Lambda_0(f)^n$ is the combinatorial expansion factor of $f^n$.

The combinatorial expansion factor is closely related to the notion of \emph{visual metrics and their expansion factors}. Every expanding Thurston map $f\:\S^2\ra \S^2$ induces a natural class of metrics on $\S^2$, called \emph{visual metrics} (see Definition \ref{visual}), and each visual metric $d$ has an associated \emph{expansion factor} $\Lambda > 1$. This visual metric is essentially characterized by the geometric property that the diameter of an $n$-tile is about $\Lambda^{-n}$, and the distance between two disjoint $n$-tiles is at least about $\Lambda^{-n}$. The supremum of the expansion factors of all visual metrics is equal to the combinatorial expansion factor $\Lambda_0$ (see \cite[Theorem 1.5]{BMExpanding}). For Latt\`es maps, the supremum is obtained. In general, the supremum is not obtained.
----------------------------
}
\bigskip

A geodesic metric space $(X,d)$ is called a \emph{Gromov hyperbolic} space if every geodesic triangle in it is ``very thin''. It can also defined in terms of \emph{Gromov products}.
For any points $x,y,p\in X$, the \emph{Gromov product} $(x,y)_p$ of $x$ and $y$ with respect to the base point $p$ is defined as
\begin{eqnarray} \label{gproduct}
  (x,y)_p \:= \frac12 \left[d(x,p)+d(y,p)-d(x,y) \right].
\end{eqnarray}
The space $X$ is called \emph{$\delta$-hyperbolic} (or \emph{Gromov hyperbolic}) for some $\delta\geq 0$ if there exists a base point $p\in X$ such that for all $x,y,z\in X$, we have
\begin{eqnarray} \label{trianglein}
  (x,y)_p\geq \min\{(x,z)_p,(z,y)_p\}-\delta.
\end{eqnarray}

We construct a graph $\G=\G(f,\mathcal C)$ by letting the tiles in the cell decompositions of $(f, \mathcal C)$ be vertices of $\G$.
There is an edge between the two vertices $X^n,Y^m\in V$, denoted
$X^n\sim Y^m$ if as underlying tiles
\[|n-m|\leq 1 \mbox{ and } X^n\cap Y^m\not= \emptyset.\]
It turns out that the graph $\G$ with the path metric is a Gromov hyperbolic space (see Theorem \ref{gh}).
\begin{thm} 
Let $f\: \S^2 \ra \S^2$ be an expanding Thurston map 
and let $\mathcal C \subset \S^2$ be a Jordan curve containing $\post(f)$. Then the graph $\G(f,\mathcal C)$ equipped with the path metric $\eta$ is a Gromov hyperbolic space.
\end{thm}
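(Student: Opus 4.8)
The plan is to verify Gromov hyperbolicity directly via the Gromov product criterion \eqref{trianglein}, using a base point $p = X^0$ (a fixed $0$-tile) and exploiting the combinatorial structure of the cell decompositions. The first step is to understand the path metric $\eta$ on $\G$ well enough to compute Gromov products up to bounded additive error. The key observation I would establish is a distance formula: for two vertices $X^n$ and $Y^m$ (underlying $n$- and $m$-tiles), $\eta(X^n, Y^m)$ is comparable, up to an additive constant, to $n + m - 2k$, where $k$ is the largest level $\le \min\{n,m\}$ at which there exist a $k$-tile $Z^k$ containing $X^n$ and a $k$-tile $W^k$ containing $Y^m$ with $Z^k \cap W^k \neq \emptyset$ (equivalently, roughly, the largest $k$ such that $X^n$ and $Y^m$ are "close" at scale $k$). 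The lower bound here is the substantive direction: a path from $X^n$ to $Y^m$ must climb from level $n$ down to some level and back up, and an edge changes the level by at most $1$, so I must argue that a path cannot stay at a high level while traversing from the region near $X^n$ to the region near $Y^m$ — this is precisely where expansion of $f$ enters, since disjoint $k$-tiles are uniformly separated in any visual metric, so one cannot bridge far-apart tiles using only high-level (small) tiles. I would use the cell-decomposition results from Section 5 of \cite{BMExpanding}, together with the expansion property (Definition~\ref{expandingmap}), to make this rigorous, likely by relating chains of adjacent $k$-tiles to connectedness in $\S^2$ and using that the "$k$-flowers" or $k$-tile neighborhoods shrink.

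Once the distance formula is in hand, the Gromov product $(X^n, Y^m)_{X^0}$ becomes, up to an additive constant, $\tfrac12\big[(n) + (m) - (n + m - 2k)\big] = k$, i.e. the Gromov product is essentially the highest level at which the two tiles are still close. Hyperbolicity then reduces to a purely combinatorial three-point inequality about this "meeting level": if $X^n$ is close to $Y^m$ at level $k_1$ and $Y^m$ is close to $Z^l$ at level $k_2$, then $X^n$ is close to $Z^l$ at level at least $\min\{k_1, k_2\}$ minus a universal constant. This follows from the hierarchical nesting of the cell decompositions — a $k_1$-tile near $X^n$ meets a $k_1$-tile near $Y^m$, and similarly at level $k_2 \ge k_1$, so passing to level $k_1$ everywhere and using that adjacency of tiles is roughly transitive up to bounded multiplicity (bounded by the maximum number of tiles meeting at a vertex, which is finite by \cite{BMExpanding}) gives the claim. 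I expect the uniformity of $\delta$ — independence of the particular tiles — to come for free from the uniform bounds in the cell-decomposition theory (bounded valence, the expansion constants), so no compactness argument should be needed.

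The main obstacle will be the lower bound in the distance formula: proving that two tiles which are combinatorially "far apart at every high level" cannot be joined by a short path in $\G$. The difficulty is that $\G$ allows edges between tiles of adjacent levels $|n-m| \le 1$, so one must carefully track how the level can oscillate along a path and show that any "shortcut" through high-level tiles is blocked by the expansion of $f$ — essentially a quantitative statement that a chain of pairwise-adjacent $k$-tiles connecting two points has length bounded below in terms of the visual distance between those points divided by $\Lambda^{-k}$. I would prove this by fixing a visual metric $d$ with expansion factor $\Lambda$, invoking the two defining inequalities (diameter of $n$-tile $\asymp \Lambda^{-n}$, distance between disjoint $n$-tiles $\gtrsim \Lambda^{-n}$), and doing a telescoping/triangle-inequality estimate along the path. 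Everything else — the upper bound on $\eta$ (just exhibit an explicit path descending to the meeting level and back), and the combinatorial three-point inequality — I expect to be routine given the machinery of \cite{BMExpanding}.
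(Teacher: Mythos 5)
Your plan is essentially the paper's proof: fix a visual metric $d$ with expansion factor $\Lambda$, show via a telescoping estimate along arbitrary paths (diameters of level-$k$ tiles $\asymp\Lambda^{-k}$, disjoint $k$-tiles $\gtrsim\Lambda^{-k}$ apart) together with an explicit descending-and-ascending path that the Gromov product agrees up to an additive constant with the ``meeting level'' $m(X,Y)$, and then verify a three-point inequality for that level. Two cautions: define the meeting level via $k$-tiles \emph{meeting} $X^n$ and $Y^m$ rather than \emph{containing} them, since for a non-invariant curve $\mathcal C$ the cell decompositions $\D^k$ do not nest and a $k$-tile containing $X^n$ need not exist; and your combinatorial ``adjacency is roughly transitive by bounded valence'' step for the three-point inequality is under-justified as stated (it needs a cross-level covering/flower lemma, not just bounded valence), whereas the paper closes this step immediately with the same visual-metric diameter comparison, namely $\diam(X\cup Y)\leq 2\max\{\diam(X\cup Z),\diam(Z\cup Y)\}$ combined with $\diam\asymp\Lambda^{-m}$.
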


There is a natural boundary at infinity of a Gromov hyperbolic space. Roughly speaking, the \emph{boundary at infinity} is the set of equivalence classes of geodesic rays in the Gromov hyperbolic space. It can also be equipped with a \emph{Gromov product} by taking infimum of the infimum limit of the Gromov product along all the geodesic rays among the corresponding equivalence classes.
 A \emph{visual metric} $\rho$ on the boundary at infinity of a Gromov hyperbolic space is a metric that has a bounded ratio
\[ \rho(\xi,\xi')/ \Lambda^{-(\xi,\xi')_p}\]
for some fixed $\Lambda>1$ and  for all points $\xi$ and $\xi'$ on the boundary.


In Proposition \ref{samevisual}, we show the following:
\begin{pro} 
For an expanding Thurston map $f$ and a Jordan curve $\mathcal C\subset \S^2$ containing $\post (f)$, the boundary at infinity $\partial_{\infty}\G$ of the graph tile $\G(f,\mathcal C)$ can be identified with $\S^2$.
Under this identification, a metric $d$ is a visual metric on $\S^2$ with respect
to the expanding Thurston map $f$ if and only if $d$ is a visual metric on $\partial_{\infty}\G$ (in the sense of Gromov hyperbolic spaces).
\end{pro}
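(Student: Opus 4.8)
The plan is to establish a bijection between $\p\G$ and $\S^2$, and then to match up the two Gromov products (up to a bounded additive error) so that the two notions of visual metric coincide. The key intuition is that a point $\xi \in \S^2$ should correspond to the equivalence class of a geodesic ray in $\G$ that descends through a nested sequence of tiles shrinking to $\xi$. Concretely, given $\xi\in\S^2$, one can choose for each $n$ an $n$-tile $X^n$ containing $\xi$ with $X^{n+1}\subset X^n$ (using the cell decomposition structure from \cite{BMExpanding}); the uniform smallness of tiles coming from expansion guarantees $\bigcap_n X^n=\{\xi\}$. Consecutive tiles $X^n, X^{n+1}$ are adjacent in $\G$, so the sequence $(X^0, X^1, X^2,\dots)$ is a path in $\G$ going to infinity; by a standard argument it lies within bounded Hausdorff distance of a geodesic ray, and different choices of the nested sequence give rays within bounded distance of each other, hence the same boundary point. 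This defines a map $\Phi\:\S^2 \to \p\G$. For surjectivity and injectivity I would argue that any geodesic ray, being quasi-isometric to such a tile chain, determines a unique nested sequence of tiles (up to bounded ambiguity) and thus a unique point of $\S^2$; injectivity uses that two distinct points of $\S^2$ are eventually separated into disjoint $n$-tiles, which forces the corresponding rays to diverge.

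The second and main step is the comparison of Gromov products. Let $p$ be the basepoint of $\G$, naturally taken to be one of the two $0$-tiles. For $\xi,\xi'\in\S^2$, I claim
\[
(\xi,\xi')_p \asymp m(\xi,\xi') + O(1),
\]
where the right-hand Gromov product is computed in $\G$ along the tile-chain rays, and $m(\xi,\xi')$ is the largest $n$ such that $\xi$ and $\xi'$ lie in a common $n$-tile, or in two adjacent $n$-tiles — exactly the combinatorial quantity that controls visual metrics for $f$ (cf. Definition~\ref{visual}). The point is that the tile chains for $\xi$ and $\xi'$ agree (or stay adjacent) up to level roughly $m(\xi,\xi')$ and then separate; since adjacency in $\G$ drops the ``level'' coordinate of a vertex by at most one per edge, the distance in $\G$ from $p$ to the point where the two chains part company is comparable to $m(\xi,\xi')$, and standard thin-triangle estimates then give $(\xi,\xi')_p = m(\xi,\xi') + O(1)$. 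Here one must use both directions of expansion: that $n$-tiles have diameter $\lesssim \Lambda^{-n}$ (so chains that stay together long correspond to nearby points) and that disjoint $n$-tiles are $\gtrsim \Lambda^{-n}$ apart (so chains that separate correspond to points that are genuinely far in the visual metric).

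Once this comparison is in hand, the equivalence of the two visual-metric notions is essentially formal. A metric $d$ on $\S^2$ is visual for $f$ with expansion factor $\Lambda$ iff $d(\xi,\xi') \asymp \Lambda^{-m(\xi,\xi')}$ (this is the standard reformulation from \cite{BMExpanding}), while a metric $\rho$ on $\p\G$ is visual for the hyperbolic graph $\G$ iff $\rho(\xi,\xi')\asymp \Lambda^{-(\xi,\xi')_p}$; the estimate $(\xi,\xi')_p = m(\xi,\xi')+O(1)$ makes these two conditions the same, the $O(1)$ error merely affecting the implicit multiplicative constants. I would also note that the visual parameter $\Lambda$ must be taken close enough to $1$ for the hyperbolic-boundary visual metric to exist (the usual requirement $\log\Lambda$ small relative to the hyperbolicity constant $\delta$ of $\G$), and that this restriction is harmless since we only claim the two \emph{classes} of metrics agree, and both classes are parametrized by admissible $\Lambda$.

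I expect the main obstacle to be the careful bookkeeping in Step~2: showing that a tile chain is within bounded Hausdorff distance of a \emph{geodesic} ray in $\G$ (not just a quasigeodesic), and controlling how much the ``level'' of a vertex can oscillate along a genuine geodesic between $p$ and a far-away tile. This requires a lemma, presumably already available from the proof of Theorem~\ref{gh} or provable by the same methods, saying that geodesics in $\G$ are, up to bounded error, monotone in the level coordinate and stay close to nested tile chains — essentially a ``geodesics go straight down through the tile hierarchy'' statement. With that lemma the Gromov product computation reduces to the combinatorics of the tile decomposition, which is exactly the content of the visual-metric characterization in \cite{BMExpanding}.
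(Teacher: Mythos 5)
Your overall strategy is the same as the paper's: identify boundary points with points of $\S^2$ via sequences of tiles shrinking to a point, and reduce the visual-metric equivalence to an additive comparison between the Gromov product and the combinatorial quantity $m$. Two points of difference are worth noting. First, your claim that one can choose $n$-tiles $X^n\ni\xi$ with $X^{n+1}\subset X^n$ is false for a general Jordan curve $\mathcal C\supseteq\post(f)$: the decompositions $\D^{n+1}$ and $\D^n$ are nested only when $f(\mathcal C)\subseteq\mathcal C$, which is not assumed (Theorem \ref{invariantJordancurve} provides such a curve only for some iterate of $f$). This is harmless, since any choice of tiles $X^n\ni\xi$ suffices: consecutive tiles share the point $\xi$, hence are adjacent, and because $\eta(X,\S^2)=\ell(X)+1$ the resulting chain is automatically a geodesic ray, so the quasigeodesic-versus-geodesic bookkeeping you worry about does not arise. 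Second, the "level-monotonicity of geodesics" lemma that you defer to is not needed and is not how the paper argues: the paper defines the identification in the opposite direction, sending a sequence converging to infinity to its limit point in $\S^2$, works throughout with the sequential definition of $\partial_{\infty}\G$ and of $(\xi,\xi')=\inf\liminf(X_n,Y_n)$ rather than with geodesic rays, and obtains your key estimate $(X,Y)=m(X,Y)+O(1)$ from Lemmas \ref{mdiam} and \ref{pdiam} and Proposition \ref{mp}, which use the visual metric $d$ on $\S^2$ (via diameters of $X\cup Y$) as the bridge; Lemma \ref{twom} absorbs the difference between your adjacency-based definition of $m(\xi,\xi')$ and $m_{f,\mathcal C}$. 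Your more combinatorial route for that estimate is viable (the remark after Theorem \ref{gh} notes such proofs exist), but as written your Step 2 rests on a lemma you only presume is available; in this paper the required statement is precisely Proposition \ref{mp} together with Lemma \ref{pdiam}, already proved before this proposition, so once you cite those your argument closes and the final transfer of visual metrics is the formal step you describe.
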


We deduce that for any Jordan curves $\mathcal C$ and $\mathcal C'$ containing $\post (f)$, the classes of visual metrics on $\partial_{\infty}\G(f,\mathcal C)$ and $\partial_{\infty}\G(f,\mathcal C')$  can also be identified (see Corollary \ref{corsamevisual}). A similar graph to $\G(f,\mathcal C)$ has also been studied by Kevin Pilgrim in \cite{PilJulia}, from a somewhat different point of view. Our results overlap in some special cases. He considers the map $f$ being $C^1$ and $\S^2\setminus \post{(f)}$ equipped with a special Riemannian metric, and prove that the Julia set of $f$ can be identified as the Gromov boundary of a certain Gromov hyperbolic one-complex.

In \cite{BFAsymptotic}, the \emph{asymptotic upper curvature} of a Gromov hyperbolic space is introduced. It is the analog of sectional curvature on Riemannian manifolds. Fix $\kappa\in [-\infty,0)$. We call a metric space $X$ an \emph{AC$_u(\kappa)$-space} if there exists $p\in X$ and a constant $c\geq 0$ such that for all $x,x'\in X$ and all finite sequences $x_0=x,x_1,\ldots,x_n=x'$ in $X$,
\begin{eqnarray}\label{acspaceeq}
(x,x')_p\geq \min_{i=1,\ldots,n}(x_{i-1},x_i)_p-\frac1{\sqrt{-\kappa}}\log n-c.
\end{eqnarray}
Here we use the convention $1/\sqrt{\infty}=0$.
We call
\begin{eqnarray*}
K_u(X):=\inf\{\kappa\ \in [-\infty,0): X \mbox{ is an AC$_u(\kappa)$-space} \}
\end{eqnarray*}
the \emph{asymptotic upper curvature} of $X$. It is invariant under rough-isometry.

For any Jordan curves $\mathcal C$ and $\mathcal C'$ containing $\post (f)$, the metric spaces $\G=\G(f,\mathcal C)$ and $\G'=\G(f,\mathcal C')$ are rough-isometric (see Proposition~\ref{roughgg}). Hence we may define the \emph{asymptotic upper curvature} $K_u(f)$ of an expanding Thurston map $f$ as 
\begin{eqnarray} \label{asyf}
K_u(f):= K_u(\G(f,\mathcal C)),
\end{eqnarray}
where $\mathcal C \subseteq \S^2$ is any Jordan curve containing $\post(f)$. Using the notation above, we have the following theorem (see Theorem~\ref{main2}).

\begin{thm} \label{main3}
Let $f \: \S^2 \ra \S^2$ be an expanding Thurston map. The asymptotic upper curvature of $f$ satisfies
\[K_u(f)\geq-\frac14\log^2(\deg f). \]
If in addition, the map $f$ has no periodic critical points, then the tile graph $\G=\G(f)$ is an AC$_u(\kappa)$-space with
\[ \kappa= -\frac14\log^2(\deg f),\]
if and only if the map $f$ is topologically conjugate to a Latt\`es map.
\end{thm}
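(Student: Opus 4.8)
The plan is to prove the three assertions in order: (1) the lower bound $K_u(f) \geq -\frac14\log^2(\deg f)$, (2) that a Lattès map attains $K_u(f) = -\frac14\log^2(\deg f)$ with the $\mathrm{AC}_u(\kappa)$ property holding for $\kappa = -\frac14\log^2(\deg f)$, and (3) the converse, that attaining this value forces $f$ to be topologically conjugate to a Lattès map. For (1), I would exploit the identification (Proposition~\ref{samevisual}) of $\partial_\infty \G$ with $\S^2$ together with the known relationship between the asymptotic upper curvature and the combinatorial data of the tiling. The key point is that if $\G$ were an $\mathrm{AC}_u(\kappa)$-space for some $\kappa > -\frac14\log^2(\deg f)$, then by the definition \eqref{acspaceeq} applied along chains of tiles, the combinatorial expansion factor $\Lambda_0$ would be forced to satisfy $\Lambda_0 > \deg(f)^{1/2}$; but for every expanding Thurston map one has the universal inequality $\Lambda_0 \leq \deg(f)^{1/2}$ (a consequence of the fact that each $n$-tile has $\deg(f)$ preimages among $(n+1)$-tiles, so that $D_{n+1} \leq \deg(f)\, D_n$ up to bounded error, giving $\Lambda_0 = \lim D_n^{1/n} \leq \deg(f)^{1/2}$... wait, one must be careful: the correct combinatorial bound yields $\Lambda_0 \le \deg(f)$ trivially, and $\Lambda_0 \le \deg(f)^{1/2}$ is the sharper statement needing the structure of $f^n$ on non-adjacent arcs). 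I would package this as: $\G$ is $\mathrm{AC}_u(\kappa)$ with $\kappa = -\frac14\log^2 \Lambda_0$ at best, and $\Lambda_0 \le \deg(f)^{1/2}$, giving $K_u(f) \geq -\frac14\log^2\Lambda_0 \ge -\frac14\log^2(\deg f)$.

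For (2), suppose $f$ is topologically conjugate to a Lattès map; since $K_u$ is a topological-conjugacy invariant (it is rough-isometry invariant and the graph $\G$ is built from the combinatorics, which is preserved under conjugacy), I may assume $f$ is itself Lattès. Then $\S^2$ carries the flat orbifold metric pushed down from the torus via $\Theta$ in the diagram \eqref{lat}, and in this metric the map $\bar A$ is affine with all eigenvalues of modulus $\deg(f)^{1/2}$. Consequently the visual metric of $f$ has expansion factor exactly $\Lambda_0 = \deg(f)^{1/2}$ — this is precisely the case "the supremum is obtained" mentioned in the introduction. In this situation the tiling is genuinely self-similar at ratio $\deg(f)^{-1/2}$ with uniformly bounded combinatorics, and one checks directly that the Gromov products in $\G$ satisfy the sharp chain inequality \eqref{acspaceeq} with $-\kappa = \frac14\log^2(\deg f)$ and a bounded constant $c$: along any finite chain $x_0,\dots,x_n$ the worst drop in the Gromov product is governed by the number of "levels" crossed, which is $O(\log n)$ with the sharp constant $1/\sqrt{-\kappa} = 2/\log(\deg f)$, because the branching of the tree-like structure of $\G$ is exactly $\deg(f)$ per unit of scale. (The absence of periodic critical points is automatic for Lattès maps, so the hypothesis is consistent.)

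For (3), the converse, I would run a rigidity argument. Assume $f$ has no periodic critical points and $\G = \G(f)$ is $\mathrm{AC}_u(\kappa)$ with $\kappa = -\frac14\log^2(\deg f)$. From the analysis in (1), attaining equality forces $\Lambda_0(f) = \deg(f)^{1/2}$, i.e., the combinatorial expansion factor is maximal. The result I would invoke here is the characterization — this is the substantive input, analogous to results of Bonk–Meyer and Häissinsky–Pilgrim — that an expanding Thurston map with no periodic critical points whose combinatorial expansion factor equals $\deg(f)^{1/2}$ is topologically conjugate to a Lattès map. The idea of this step: maximality of $\Lambda_0$ means the diameters of $n$-tiles contract at the fastest possible rate given the degree, which forces the local multipliers to be "balanced" everywhere; this balancedness, combined with postcritical finiteness and expansion, is exactly the condition that makes the Thurston obstruction vanish with the orbifold being Euclidean, so Thurston's characterization theorem produces a rational realization, and the Euclidean orbifold condition then pins it to a Lattès map.

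The main obstacle I anticipate is step (3): extracting from the single quantitative equality $K_u(f) = -\frac14\log^2(\deg f)$ the rigid combinatorial conclusion $\Lambda_0(f) = \deg(f)^{1/2}$, and then bridging from that purely metric/combinatorial statement to the dynamical classification. The first half requires showing that the "defect" in the $\mathrm{AC}_u$ inequality is controlled below by $-\frac14\log^2\Lambda_0$ with no slack — essentially that the asymptotic upper curvature of $\G$ equals $-\frac14\log^2\Lambda_0$ exactly, not just $\ge$; this needs a matching upper-curvature bound for $\G$ expressed in terms of $\Lambda_0$, which is where the hypothesis "no periodic critical points" should enter (periodic critical points would create exceptional tiles where the local degree compounds, breaking the uniform comparison between combinatorial chains and Gromov products). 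The second half is a known-type but nontrivial rigidity theorem; I would cite or adapt the relevant Lattès-characterization machinery rather than reprove it. I would also need to double-check the edge case $\deg f$ when $-\kappa$ is finite but the map has a Euclidean-but-not-torus orbifold (the $(2,2,2,2)$, $(2,4,4)$, $(2,3,6)$, $(3,3,3)$ types) — all of these are covered under "topologically conjugate to a Lattès map" in the generalized sense, so the statement should be interpreted accordingly, and I would make that explicit in a remark.
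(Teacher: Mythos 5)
Your parts (1) and (2) follow essentially the paper's route: identify $\partial_\infty\G$ with $\S^2$ (Proposition~\ref{samevisual}), use Theorem~\ref{acku} together with the Bonk--Meyer bound $\Lambda_0(f)\le \deg(f)^{1/2}$ for the lower bound on $K_u(f)$, and for the Latt\`es direction use the existence of a visual metric with expansion factor exactly $\deg(f)^{1/2}$ (your direct verification of the chain inequality \eqref{acspaceeq} is unnecessary and vague; citing Theorem~\ref{acku} closes that step, as the paper does). The genuine problem is in your step (3). You reduce the hypothesis ``$\G$ is an AC$_u(\kappa)$-space with $\kappa=-\frac14\log^2(\deg f)$'' to the single equality $\Lambda_0(f)=\deg(f)^{1/2}$ and then want to cite a Latt\`es characterization in terms of the combinatorial expansion factor alone. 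No such characterization is available in the form you need: the characterization actually stated (Theorem~\ref{main0}) requires the \emph{uniform} bound $D_n\ge c(\deg f)^{n/2}$ for all $n$, which is strictly stronger than the limit statement $\lim D_n^{1/n}=\deg(f)^{1/2}$. The paper itself stresses that in general the supremum of expansion factors is not attained, which is exactly the gap between the limit statement and the uniform bound (or the existence of a visual metric with factor $\deg(f)^{1/2}$); your fallback sketch via vanishing Thurston obstructions and a Euclidean orbifold is not an argument that bridges it.

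Moreover, your reduction discards precisely the information the hypothesis carries. Being AC$_u(\kappa)$ for the specific value $\kappa=-\frac14\log^2(\deg f)$ is stronger than $K_u(f)=-\frac14\log^2(\deg f)$ (the infimum defining $K_u$ need not be attained), and the paper's converse argument uses this extra strength directly: it applies inequality \eqref{acspaceeq}, with the exact coefficient $2/\log(\deg f)$, to a minimal chain $X_1,\dots,X_{D_n}$ of $n$-tiles joining opposite sides of $\mathcal C$. Consecutive tiles intersect, so $(X_{i-1},X_i)\ge n-O(1)$ (here the ``no periodic critical points'' hypothesis enters via Lemma~\ref{noperiodic}), while $(X_1,X_{D_n})=O(1)$ because $X_1\cup X_{D_n}$ has diameter bounded below at definite scale (Lemma~\ref{pdiam}). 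This yields $\log D_n\ge (n-O(1))\log(\deg f)^{1/2}$, i.e.\ $D_n\ge c(\deg f)^{n/2}$ for all $n$, which is exactly the hypothesis of Theorem~\ref{main0}. To repair your proof you would either have to run this chain argument (or an equivalent one producing the uniform bound), or prove the missing implication from $\Lambda_0(f)=\deg(f)^{1/2}$ to $D_n\ge c(\deg f)^{n/2}$, which you neither do nor can simply cite.
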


Finally, we explain how Theorem~\ref{main3} may be interpreted as adding a new and important piece to the Sullivan dictionary.
The Sullivan dictionary is a collection of correspondences between concepts and results from the field of Kleinian groups and
the field of iterated maps on $\S^2$.

The dictionary first appeared alongside the proof of the \emph{no wandering domain theorem} \cite{SulQuasiconformal1}.
Recall that the \emph{Fatou set} of a rational map $f$ on the Riemann sphere
is the set of points whose nearby points
stay close together under iteration of $f$.
The no wandering domain theorem states that each connected component of the Fatou set is eventually periodic under iteration by $f$.
Sullivan proved this result, observing that it was analogous to \emph{Ahlfors' finiteness theorem} \cite{AhlRemarks}. Recall that a Kleinian group $\Gamma$ acts on the Riemann sphere via M\"obius transformations, and that its
\emph{domain of discontinuity} is the largest open subset of the Riemann sphere on which $\Gamma$ acts properly discontinuously.
If $\Gamma$ is finitely generated, then Ahlfors' finiteness theorem implies that every connected component of the domain of discontinuity has a non-trivial stabilizer, i.e. there are no wandering domains.

Sullivan argued that the Fatou set of a rational map plays an analogous role to the domain of discontinuity of a Kleinian group, along with many other such correspondences (see Table~\ref{t:dict} for a sample, as well as \cite[Section~2]{RRKleinian}). Since its original appearance in \cite{SulQuasiconformal1}, the Sullivan dictionary has been significantly developed, and has continued to provide inspiration for research across both fields. For example, the recently proved \emph{density theorem for Kleinian groups} \cite{NSNonrealizability,OhsRealising} provides a counterpart to the classical \emph{density theorem for structurally stable rational maps} \cite{MRSDynamics}.



\begin{table}\footnotesize
    \begin{tabular}{| c | c |}
  \hline
  {\bf Dynamical Systems} & {\bf Kleinian Group }\\ \hline
  Fatou sets & Domains of discontinuity \\ \hline
  Julia sets & Limit sets\\ \hline
  Periodic points  & Fixed points \\ \hline
  Repelling periodic points are dense  & Loxodromic fixed points are dense \\
  in the Julia set &in the limit set \\ \hline
  Mandelbrot set & Parameter space \\ \hline
  Sullivan's no wandering domain theorem & Ahlfors' finiteness theorem\\ \hline
    Shishikura's sharp bound on number  & Ber's area theorem\\
  of Fatou cycles& \\ \hline
  Density theorem of structural stable maps & Density theorem for Kleinian groups\\ \hline
  No invariant line fields on Julia set  & No invariant line fields on limit set \\
  (conjecture)& \\ \hline
  Theorem \ref{main3} & Hamenst{\"a}dt's entropy rigidity theorem \\ \hline
    \end{tabular}
    \vspace{.3cm}
  \caption{Sullivan dictionary}\label{t:dict}
\end{table}


One famous result from the field of Kleinian groups with no known analogue under the Sullivan dictionary is
 \emph{Hamenst{\"a}dt's entropy rigidity theorem} \cite{HamEntropy}, which establishes a connection between the curvature of a compact manifold $M$ and the topological entropy of the geodesic flow on the tangent bundle of $M$. More precisely, let $h(N)$ denote the topological entropy of the geodesic flow on the unit tangent bundle of a Riemannian manifold $N$. The theorem states that if $M$ is homotopic to a compact quotient $S$ of a hyperbolic space and
 $h(M) \leq h(S)$, then the maximum of the sectional curvature of $M$ is bounded below by $-1$, and is equal to $-1$ if and only if
$M$ and $S$ are isometric.

Theorem \ref{main3} provides the counterpart to Hamenst{\"a}dt's entropy rigidity theorem under the Sullivan dictionary. Indeed, Corollary~20.8 in \cite{BMExpanding} shows that the topological entropy $h(f)$ of an expanding Thurston map $f$ is $\log(\deg(f))$. Hence, the theorem provides a lower bound on the asymptotic upper curvature of an expanding Thurston map $f$ in terms of the topological entropy of $f$, i.e.
\[K_u(f)\geq-\frac14h(f)^2, \]
together with a condition for equality to hold.



\bigskip
\noindent
\textbf{Acknowledgements.} This paper is part of the author's PhD thesis under the supervision of Mario Bonk. The author would like to thank Mario Bonk for introducing her to and teaching her about the subject of Thurston maps and its related fields. The author is inspired by his enthusiasm and mathematical wisdom, and is especially grateful for his patience and encouragement.
The author would like to thank Dennis Sullivan for valuable conversations and sharing his mathematical insights. 
The author benefited greatly from Dick Canary's mini-course on the Kleinian group aspects of the Sullivan dictionary.
The author also would like to thank Michael Zieve and Alan Stapledon for useful comments and feedback.

\section{Expanding Thurston maps and Cell Decompositions} \label{expanding}
\noindent
In this section we review some definitions and facts on expanding Thurston maps. We refer the reader to Section 3 in \cite{BMExpanding} for more details. We write $\N$ for the set of positive integers, and $\N_0$ for the set of non-negative integers. We denote the identity map on $\S^2$ by ${\rm id}_{\S^2}$.

Let $\S^2$ be a topological 2-sphere with a fixed orientation. A continuous map $f\:\S^2\ra \S^2$ is called \emph{a branched covering map} over $\S^2$ if $f$ can be locally written as
\[z\mapsto z^d\]
under certain orientation-preserving coordinate changes of the domain and range. More precisely, we require that for any point $p\in \S^2$, there exists some integer $d>0$, an open neighborhood $U_p\subseteq \S^2$ of $p$, an open neighborhood $V_q\subseteq \S^2$ of $q=f(p)$, and orientation-preserving homeomorphism
\[\phi\: U_p\ra U\subseteq \C\]
and
\[\psi \: V_p\ra V\subseteq \C\]
with $\phi(p)=0$ and $\psi(q)=0$ such that
\[(\psi\circ f \circ \phi^{-1} )(z)=z^d\]
for all $z\in U$. The positive integer $d=\deg_f(p)$ is called the \emph{local degree} of $f$ at $p$ and only depends on $f$ and $p$. A point $p\in \S^2$ is called a \emph{critical point} of $f$ if $\deg_f(p)\geq 2$, and a point $q$ is called \emph{critical value} of $f$ if there is a critical point in its preimage $f^{-1}(q)$. If $f$ is a branched covering map of $\S^2$, $f$ is open and surjective. There are only finitely many critical points of $f$ and $f$ is \emph{finite-to-one} due to the compactness of $\S^2$. Hence, $f$ is a covering map away from the critical points in the domain and critical values in the range. The \emph{degree $\deg(f)$} of $f$ is the cardinality of the preimage over a non-critical value. In addition, we have
\[\deg(f)=\sum_{p\in f^{-1}(q)}\deg_f(p)\]for every $q\in \S^2$.

For $n\in \N$, we denote the $n$-th iterate of $f$ as
\[f^n=\underbrace{f\circ f\circ \cdots \circ f}_{\textstyle{n} \mbox{ factors}}.\]
We also set $f^0={\rm id}_{\S^2}$.

If $f$ is a branched cover of $\S^2$, so is $f^n$, and
\[\deg(f^n)=\deg(f)^n.\]Let crit$(f)$ be the set of all the critical points of $f$. We define the set of \emph{postcritical points} of $f$ as
\[\post(f)=\bigcup_{n\in \N}\{f^n(c)\: c\in {\rm crit}(f)\}.\]We call a map $f$ \emph{postcritically-finite} if the cardinality of $\post(f)$ is finite. Notice that $f$ is postcritically-finite if and only if there is some $n\in \N$ for which $f^n$ is postcritically-finite.

Let $\mathcal{C}\subseteq \S^2$ be a Jordan curve containing $\post(f)$. We fix a metric $d$ on $\S^2$ that induces the standard metric topology on $\S^2$.
Denote by \emph{${\rm mesh}(f,n,{\mathcal C})$} the supremum of the diameters of all connected components of the set $f^{-n}(\S^2\setminus {\mathcal C})$.

\begin{de} \label{expandingmap}
A branched covering map $f\:\S^2\ra \S^2$ is called a \emph{Thurston map} if $\deg(f)\geq 2$ and $f$ is postcritically-finite. A Thurston map $f\:\S^2\ra \S^2$ is called \emph{expanding} if there exists a Jordan curve $\mathcal{C}\subseteq \S^2$ with $\mathcal{C} \supseteq \post(f)$ and
\begin{equation} \label{mesh}
\lim_{n\ra \infty}{\rm mesh}(f,n,{\mathcal C})=0.
\end{equation}
\end{de}

The relation \eqref{mesh} is a topological property, as it is independent of the choice of the metric, as long as the metric induces the standard topology on $\S^2$. Lemma 8.1 in \cite{BMExpanding} shows that if the relation \eqref{mesh} is satisfied for one Jordan curve ${\mathcal C}$ containing $\post(f)$, then it holds for every such curve. One can essentially show that a Thurston map is expanding if and only if all the connected components in the preimage under $f^{-n}$ of any open Jordan region not containing $\post(f)$  become uniformly small as $n$ goes to infinity.

The following theorem (Theorem 1.2 in \cite{BMExpanding}) says that there exists an invariant Jordan curve for some iterates of $f$.
\begin{thm} \label{invariantJordancurve}
If $f \: \S^2\ra \S^2$ is an expanding Thurston map, then for some $n\in \N$ there exists a Jordan curve $\mathcal{C}\subseteq \S^2$ containing $\post(f)$ such that $\mathcal C$ is invariant under $f^n$, i.e., $f^n({\mathcal C})\subseteq {\mathcal C}$.
\end{thm}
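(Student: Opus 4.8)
The plan is to obtain an invariant curve as the limit of a rapidly convergent sequence of Jordan curves produced by repeatedly lifting a small deformation through $f$. Since every iterate $F=f^{n}$ is again an expanding Thurston map with $\post(F)=\post(f)$, and an $F$-invariant Jordan curve through $\post(f)$ is exactly what is wanted, I am free to replace $f$ by a high iterate $F=f^{n}$, with $n$ fixed later. Begin with \emph{any} Jordan curve $\mathcal{C}^{0}\subseteq\S^{2}$ with $\post(f)\subseteq\mathcal{C}^{0}$. The graph $F^{-1}(\mathcal{C}^{0})=f^{-n}(\mathcal{C}^{0})$ contains $\post(f)$, is connected, and its complementary faces are the $n$-tiles of $\mathcal{D}^{n}(f,\mathcal{C}^{0})$, all of diameter at most $\operatorname{mesh}(f,n,\mathcal{C}^{0})$, which tends to $0$ as $n\to\infty$. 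Put $\mathcal{C}^{0}$ in general position with respect to this graph and push every subarc of $\mathcal{C}^{0}$ that lies in the interior of an $n$-tile onto the boundary of that tile, doing the pushes consistently (innermost subarcs first) so that the result stays embedded. This produces a Jordan curve $\mathcal{C}^{1}\subseteq F^{-1}(\mathcal{C}^{0})$ and an ambient isotopy $H^{0}$ of $\S^{2}$ with $H^{0}_{0}=\mathrm{id}$, $H^{0}_{1}(\mathcal{C}^{0})=\mathcal{C}^{1}$, fixing $\post(f)$ pointwise, and with displacement $\sup_{x,t}d(H^{0}_{t}(x),x)$ bounded by a quantity that goes to $0$ as $n\to\infty$.

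\textbf{Iteration and the Hausdorff limit.} Since $H^{0}$ is an isotopy of $\S^{2}$ starting at the identity and fixing the critical values of $F$ (which all lie in $\post(f)$), the isotopy lifting theorem for branched covers yields a unique isotopy $H^{1}$ with $H^{1}_{0}=\mathrm{id}$ and $F\circ H^{1}_{t}=H^{0}_{t}\circ F$; it fixes $F^{-1}(\post(f))\supseteq\post(f)$ pointwise. Defining $H^{k+1}$ to be the $F$-lift of $H^{k}$ and $\mathcal{C}^{k+1}:=H^{k}_{1}(\mathcal{C}^{k})$, an induction using $F\circ H^{k}_{1}=H^{k-1}_{1}\circ F$ and the base case $F(\mathcal{C}^{1})\subseteq\mathcal{C}^{0}$ gives $F(\mathcal{C}^{k+1})\subseteq\mathcal{C}^{k}$ for all $k$; hence $\mathcal{C}^{k}\subseteq F^{-k}(\mathcal{C}^{0})=f^{-nk}(\mathcal{C}^{0})$ is a union of edges of $\mathcal{D}^{nk}(f,\mathcal{C}^{0})$, and each $\mathcal{C}^{k}$ contains $\post(f)$. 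If the isotopies shrink fast enough, the compositions $G_{k}:=H^{k}_{1}\circ\cdots\circ H^{0}_{1}$ converge uniformly, the Jordan curves $\mathcal{C}^{k}=G_{k-1}(\mathcal{C}^{0})$ converge in the Hausdorff metric to a compact connected set $\mathcal{C}^{\infty}\supseteq\post(f)$, and passing to Hausdorff limits in $F(\mathcal{C}^{k+1})\subseteq\mathcal{C}^{k}$ (using continuity of $F$ and compactness of $\S^{2}$) yields $f^{n}(\mathcal{C}^{\infty})\subseteq\mathcal{C}^{\infty}$. So, granting convergence and that $\mathcal{C}^{\infty}$ is a Jordan curve, the theorem follows.

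\textbf{Why the isotopies shrink.} Each nontrivial track $t\mapsto H^{k+1}_{t}(x)$ is carried by $F$ onto a track $t\mapsto H^{k}_{t}(F(x))$ that avoids $\post(f)$ (as the isotopies fix $\post(f)$ pointwise), and it lies in a single connected component of the $F$-preimage of that track; since a track is an arc disjoint from $\post(f)$, it is contained in an open Jordan region disjoint from $\post(f)$, and expansion forces the components of the $f^{-n}$-preimage of such a region to have small diameter. Quantitatively, working in a visual metric $d$ with expansion factor $\Lambda$ one aims for $\|H^{k+1}\|\le\theta\,\|H^{k}\|$ with $\theta\asymp\Lambda^{-n}<1$ once $n$ is fixed large, whence $\sum_{k}\|H^{k}\|<\infty$. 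The subtle point is that the supports of the $H^{k}$, being iterated $F$-preimages of a fixed set, necessarily accumulate on $\post(f)$ as $k\to\infty$, and near a critical value the inverse branches of $f$ are \emph{not} contracting; it is precisely the global expansion property, expressed through a visual metric, whose tile structure is uniformly $\asymp\Lambda^{-m}$ at level $m$ even across critical points, that rescues a contraction factor uniform in $k$. Establishing this uniform decay is one of the two delicate parts of the argument.

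\textbf{The main obstacle.} The main obstacle is to show that $\mathcal{C}^{\infty}$ is a genuine Jordan curve rather than a more complicated continuum: uniform convergence of the homeomorphisms $G_{k}$ does not by itself prevent the limit from identifying distinct points, and the crude displacement bounds give no control on the inverses $G_{k}^{-1}$. Here one must once more exploit the structure produced by expansion. The approach I would take is to prove that the approximating curves $\mathcal{C}^{k}$ are \emph{uniformly Jordan}, that is, admit parametrizations $\gamma_{k}\colon\S^{1}\to\S^{2}$ with a modulus of continuity and a modulus of injectivity that do not depend on $k$; the key would be to show, using that $\mathcal{C}^{k}$ is a subcomplex of $\mathcal{D}^{nk}(f,\mathcal{C}^{0})$ whose edges at level $nk$ have comparable diameters $\asymp\Lambda^{-nk}$, that $\mathcal{C}^{k}$ meets only a uniformly bounded number of tiles at each scale, so that along $\mathcal{C}^{k}$ combinatorial distance and spatial distance are comparable uniformly in $k$. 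Granting such a uniform modulus, an Arzel\`a--Ascoli argument extracts a parametrization of $\mathcal{C}^{\infty}$ inheriting both moduli, so $\mathcal{C}^{\infty}$ is a Jordan curve through $\post(f)$; combined with $f^{n}(\mathcal{C}^{\infty})\subseteq\mathcal{C}^{\infty}$ this proves the theorem.
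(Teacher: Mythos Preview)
The paper does not prove this theorem at all: it is quoted verbatim as Theorem~1.2 of \cite{BMExpanding} and used as background. So there is no ``paper's own proof'' to compare your proposal against.

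That said, your outline is essentially the Bonk--Meyer iterative pullback argument from \cite{BMExpanding}: choose a high iterate $F=f^{n}$, find $\mathcal{C}^{1}\subseteq F^{-1}(\mathcal{C}^{0})$ isotopic to $\mathcal{C}^{0}$ rel $\post(f)$, lift the isotopy repeatedly, and pass to a limit. You have correctly located the two genuine difficulties and have been honest that you are only sketching them. Two comments on those sketches. First, your contraction heuristic ``a track of $H^{k}$ is contained in an open Jordan region disjoint from $\post(f)$, so its $F$-preimage components are small'' does not by itself yield a ratio $\|H^{k+1}\|\le\theta\|H^{k}\|$: it only bounds $\|H^{k+1}\|$ by the mesh of the preimage of some region whose size depends on $\|H^{k}\|$, and you need an additional uniformity statement (in Bonk--Meyer this is done by controlling lifts through flowers and the visual metric, and one first arranges $\|H^{0}\|$ small enough that every track sits in a single $0$-flower). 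Second, for the Jordan property of $\mathcal{C}^{\infty}$, the assertion that $\mathcal{C}^{k}$ meets only a uniformly bounded number of tiles at each fixed scale is exactly the crux, and it does not follow just from $\mathcal{C}^{k}$ being a subcomplex of $\mathcal{D}^{nk}$; one has to exploit that $\mathcal{C}^{k}$ is the $H^{k-1}_{1}\circ\cdots\circ H^{0}_{1}$-image of $\mathcal{C}^{0}$ together with the summable displacement to get a quasi-arc/quasi-circle type estimate. Both points are handled in \cite{BMExpanding}, and your proposal would need to import those arguments to be complete.
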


Recall that an \emph{isotopy} $H$ between two homeomorphisms is a homotopy so that at each time $t\in [0,1]$, the map $H_t$ is a homeomorphism. An \emph{isotopy $H$ relative to a set $A$} is an isotopy satisfying
\[H_t(a)=H_0(a)=H_1(a)\]
for all $a\in A$ and $t\in [0,1]$.

\begin{de}
Consider two Thurston maps $f\:\S^2\ra \S^2$ and $g\:\S^2_1\ra \S^2_1$, where $\S^2$ and $\S^2_1$ are $2$-spheres. We call the maps $f$ and $g$ \emph{(Thurston) equivalent} if there exist homeomorphisms $h_0,h_1\:\S^2\ra \S^2_1$ that are isotopic relative to $\post(f)$ such that $h_0\circ f=g\circ h_1$.
We call the maps $f$ and $g$ \emph{topologically conjugate} if there exists a homeomorphism $h\:\S^2\ra \S^2_1$ such that $h\circ f=g\circ h$.
\end{de}
For equivalent Thurston maps, we have the following commutative diagram
\[\begin{CD}
\S^2 @>h_1>>\S^2_1 \\
@Vf VV @VVg V\\
\S^2 @>h_0>> \S^2_1  .
\end{CD} \]



We now consider the cardinality of the postcritical set of $f$. In Remark 5.5 in \cite{BMExpanding}, it is proved that there are no Thurston maps with $\#\post(f)\leq 1$. Proposition 6.2 in \cite{BMExpanding} shows that all Thurston maps with $\#\post(f)=2$ are Thurston equivalent to a \emph{power map}  on the Riemann sphere,
\[z\mapsto z^k, \mbox{ for some }k\in \Z\setminus\{-1,0,1\}.\]
Corollary 6.3 in \cite{BMExpanding} states that if $f\:\S^2\ra\S^2$ is an expanding Thurston map, then $\#\post(f)\geq 3$.

Let $f\:\S^2\ra\S^2$ be a Thurston map, and let ${\mathcal C}\subseteq \S^2$ be a Jordan curve containing $\post(f)$. By the Sch\"onflies theorem, the set $\S^2\setminus {\mathcal C}$ has two connected components, which are both homeomorphic to the open unit disk. Let $T_0$ and $T_0'$ denote the closures of these components. They are cells of dimension $2$, which we call \emph{$0$-tiles}. The postcritical points of $f$ are called \emph{$0$-vertices} of $T_0$ and $T_0'$, singletons of which are cells of dimension $0$. We call the closed arcs between vertices \emph{$0$-edges} of $T_0$ and $T_0'$, which are cells of dimension $1$. These $0$-vertices, $0$-edges and $0$-tiles form a cell decomposition of $\S^2$, denoted by $\D^0=\D^0(f,{\mathcal C})$. We call the elements in $\D^0$ $0$-cells. Let $\D^1=\D^1(f,{\mathcal C})$ be the set of connected subsets $c\subseteq \S^2$ such that $f(c)$ is a cell in $\D^0$ and $f|_c$ is a homeomorphism of $c$ onto $f(c)$. Call $c$ a $1$-tile if $f(c)$ is a $0$-tile, call $c$ a $1$-edge if $f(c)$ is a $0$-edge, and call $c$ a $1$-vertex if $f(c)$ is a $1$-vertex. Lemma 5.4 in \cite{BMExpanding} states that $\D^1$ is a cell decomposition of $\S^2$. Continuing in this manner, let $\D^n=\D^n(f,{\mathcal C})$  be the set of all connected subsets of $c\subseteq \S^2$ such that $f(c)$ is a cell in $\D^{n-1}$ and $f|_c$ is a homeomorphism of $c$ onto $f(c)$, and call these connected subsets $n$-tiles, $n$-edges and $n$-vertices correspondingly, for $n\in\N_0$. By Lemma 5.4 in \cite{BMExpanding}, $\D^n$ is a cell decomposition of $\S^2$, for each $n\in \N_0$, and we call the elements in $\D^n$ $n$-cells. The following lemma lists some properties of these cell decompositions. For more details, we refer the reader to Proposition 6.1 in \cite{BMExpanding}.
\begin{lem}\label{tilenumber}
Let $k,n\in \N_0$, let $f\:\S^2\ra \S^2$ be a Thurston map, let $\mathcal C\subset~ \S^2$ be a Jordan curve with $\mathcal C\supseteq \post(f)$, and let $m=\#\post(f)$.
\begin{enumerate}
  \item 
     If $\tau$ is any $(n+k)$-cell, then $f^k(\tau)$ is an $n$-cell, and $f^k|_{\tau}$ is a homeomorphism of $\tau$ onto $f^k(\tau)$.
  \item Let $\sigma$ be an $n$-cell. Then $f^{-k}(\sigma)$ is equal to the union of all $(n+k)$-cells $\tau$ with $f^k(\tau)=\sigma$.
  \item The number of $n$-vertices is less than or equal to $m\deg(f)^n$, the number of $n$-edges is $m\deg(f)^n$, and the number of $n$-tiles is $2\deg (f)^n$.
  \item The $n$-edges are precisely the closures of the connected components of
       $f^{-n}(\mathcal C)\setminus f^{-n}(\post(f))$. The $n$-tiles are precisely the closures of the connected components of $\S^2\setminus f^{-n}(\mathcal C)$.
  \item Every $n$-tile is an $m$-gon, i.e., the number of $n$-edges and $n$-vertices contained in its boundary is equal to $m$.
\end{enumerate}
\end{lem}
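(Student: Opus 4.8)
\emph{Proof strategy.} The plan is to base everything on two inputs. The first is that $\D^{n}$ is by construction the pullback of $\D^{n-1}$ under $f$ --- the $n$-cells are exactly the connected sets mapped homeomorphically by $f$ onto an $(n-1)$-cell --- so that, by Lemma~5.4 of \cite{BMExpanding}, each $\D^{n}$ is a cell decomposition and, iterating, $\D^{n+k}$ is the pullback of $\D^{n}$ under $f^{k}$. The second is that every critical value of $f^{k}$ lies in $\post(f)$: if $\deg_{f^{k}}(p)\geq 2$ then $f^{j}(p)\in\mathrm{crit}(f)$ for some $0\leq j\leq k-1$, whence $f^{k}(p)\in\post(f)$ by forward invariance of $\post(f)$. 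Since $\post(f)\subseteq\mathcal C$, and a proper local homeomorphism is a covering map, I would first record that $f^{k}$ restricts to covering maps of degree $\deg(f)^{k}$:
\[ f^{k}\colon\S^{2}\setminus f^{-k}(\mathcal C)\to\S^{2}\setminus\mathcal C \quad\text{and}\quad f^{k}\colon\S^{2}\setminus f^{-k}(\post(f))\to\S^{2}\setminus\post(f). \]

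For~(1) I would argue by induction on $k$: the case $k=1$ is the definition of $\D^{n+1}$, and if $\tau\in\D^{n+k}$ then $f(\tau)\in\D^{n+k-1}$ with $f|_{\tau}$ a homeomorphism, so the inductive hypothesis gives that $f^{k-1}(f(\tau))=f^{k}(\tau)$ is an $n$-cell with $f^{k-1}|_{f(\tau)}$ a homeomorphism, and then $f^{k}|_{\tau}=\big(f^{k-1}|_{f(\tau)}\big)\circ\big(f|_{\tau}\big)$ is a homeomorphism onto $f^{k}(\tau)$. For~(2), the inclusion ``$\supseteq$'' is immediate from~(1); for ``$\subseteq$'' I would use that $\D^{n+k}$ is the pullback of $\D^{n}$ under $f^{k}$, so that a point $x\in f^{-k}(\sigma)$ lies on an $(n+k)$-cell that $f^{k}$ maps onto $\sigma$, reducing --- if $x$ sits on a lower-dimensional face --- to an $(n+k)$-cell carrying that face on its boundary. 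From~(2) I would then read off the set identities needed later: the $n$-vertices are $f^{-n}(\post(f))$ (a cell collapsing to a point is a vertex); $f^{-n}(\mathcal C)$ is the union of all closed $n$-edges and all $n$-vertices; and $f^{-n}$ of the interior of a $0$-tile (resp.\ $0$-edge) is the union of the interiors of the $n$-tiles (resp.\ $n$-edges) over it. Because in a cell decomposition the open cells are disjoint, connected, cover $\S^{2}$, and are determined by their closures, it follows that the open $n$-tiles are exactly the connected components of $\S^{2}\setminus f^{-n}(\mathcal C)$ and the open $n$-edges are exactly the components of $f^{-n}(\mathcal C)\setminus f^{-n}(\post(f))$, which is~(4).

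The counts in~(3) I would get by counting sheets. The component $\inter T_{0}$ of $\S^{2}\setminus\mathcal C$ is a disk, hence simply connected, so the covering $f^{n}\colon\S^{2}\setminus f^{-n}(\mathcal C)\to\S^{2}\setminus\mathcal C$ is trivial over it with $\deg(f)^{n}$ sheets, and likewise over $\inter T_{0}'$; since by~(4) the sheets are the open $n$-tiles, there are $2\deg(f)^{n}$ of them. Similarly each of the $m$ open arcs of $\mathcal C\setminus\post(f)$ is simply connected, so $f^{n}\colon\S^{2}\setminus f^{-n}(\post(f))\to\S^{2}\setminus\post(f)$ has $\deg(f)^{n}$ sheets over each, giving $m\deg(f)^{n}$ open $n$-edges. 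Finally, each fiber of the degree-$\deg(f)^{n}$ branched cover $f^{n}$ has at most $\deg(f)^{n}$ points, and the $n$-vertices are the union of the $m$ fibers $f^{-n}(p)$, $p\in\post(f)$, so there are at most $m\deg(f)^{n}$ of them. For~(5), I would fix an $n$-tile $\tau$, use~(1) to see that $f^{n}|_{\tau}$ is a homeomorphism onto a $0$-tile $T$, and observe that $c\mapsto f^{n}(c)$ and $c_{0}\mapsto(f^{n}|_{\tau})^{-1}(c_{0})$ give mutually inverse bijections between the cells of $\D^{n}$ in $\tau$ and the cells of $\D^{0}$ in $T$; since $\partial T=\mathcal C$ contains all $m$ postcritical points and all $m$ $0$-edges, $\partial\tau$ contains exactly $m$ $n$-vertices and $m$ $n$-edges, so $\tau$ is an $m$-gon.

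I expect the only real obstacle to be the reverse inclusion in~(2): ensuring that preimages of closed cells decompose coherently into closed cells with the expected incidences between vertices, edges and tiles. This is exactly the content already packaged in Lemma~5.4 of \cite{BMExpanding} (that each $\D^{n}$ is a genuine cell decomposition, obtained as a pullback of the previous one); once that is in hand, everything else --- the two covering maps, the sheet counts, and the $m$-gon statement --- is formal.
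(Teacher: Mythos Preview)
The paper does not give its own proof of this lemma; it simply records the statement and refers the reader to Proposition~6.1 in \cite{BMExpanding}. Your proposal therefore goes beyond what the paper does: you sketch an actual argument, and the outline you give is essentially the standard one underlying the Bonk--Meyer proof---induct on $k$ using the definition of $\D^{n}$ as the pullback of $\D^{n-1}$, use the covering maps away from $f^{-n}(\mathcal C)$ and $f^{-n}(\post(f))$ to read off the component descriptions in (4) and the sheet counts in (3), and use the homeomorphism $f^{n}|_{\tau}$ onto a $0$-tile for (5). Your identification of the one nontrivial point (the reverse inclusion in (2), which amounts to the fact that the pullback really is a cell decomposition with the expected incidences, i.e.\ Lemma~5.4 of \cite{BMExpanding}) is accurate.

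One small remark on (5): when you write that $c_{0}\mapsto (f^{n}|_{\tau})^{-1}(c_{0})$ lands in $\D^{n}$, you are implicitly using not just (1) but its converse---that every connected set mapped homeomorphically by $f^{n}$ onto a $0$-cell is an $n$-cell. This too is part of the Lemma~5.4 package (the iterated pullback $\D^{n}$ agrees with the one-step pullback of $\D^{0}$ under $f^{n}$), so it is covered by your caveat, but it is worth making explicit.
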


\excise{
------------------------
Let $\sigma$ be an $n$-cell. Let $W^n(\sigma)$ be the union of the interiors of all $n$-cells intersecting with $\sigma$, and call $W^n(\sigma)$ the \emph{$n$-flower} of $\sigma$. In general, $W^n(\sigma)$ is not necessarily simply connected. The following lemma (from Lemma 7.2 in \cite{BMExpanding}) says that if $\sigma$ consists of a single $n$-vertex, then $W^n(\sigma)$ is simply connected.
\begin{lem} \label{flower}
Let $f\: \S^2\ra \S^2$ be a Thurston map, and let $\mathcal C$  be a Jordan curve containing $\post(f)$. If $\sigma$ is an $n$-vertex, then $W^n(\sigma)$ is simply connected. In addition, the closure of $W^n(\sigma)$ is the union of all $n$-tiles containing the vertex $\sigma$.
\end{lem}
One of the most important properties of $n$-flowers is that they build a connection between $n$-tiles of different Jordan curves due to the following lemma in \cite[Lemma 7.12]{BMExpanding}.
\begin{lem}\label{flowertile}
Let $\mathcal C$ and $\mathcal C'$ be Jordan curves in $\S^2$ both containing
$\post(f)$. Then there exists a number $M$ such that each $n$-tile for $(f, \mathcal C)$
is covered by $M$ $n$-flowers for $(f, \mathcal C')$.
\end{lem}
\begin{re}
The exact same proof for this lemma shows that for $n'\geq n$, there exists a number $M$ such that each $n'$-tile $(f, \mathcal C)$ is covered by $M$ $n$-flowers for $(f, \mathcal C')$.
\end{re}
-------------------------
}

We obtain a sequence of cell decompositions of $\S^2$ from a Thurston map and a Jordan curve on $\S^2$. It would be nice if the local degrees of the map $f$ at all the vertices were bounded, and this can be obtained by the assumption of no periodic critical points (see  \cite[Lemma 16.1]{BMExpanding}).
\begin{lem} \label{noperiodic}
Let $f : \S^2\ra \S^2$ be a branched covering map. Then f has no
periodic critical points if and only if there exists $N\in \N$ such that
\[\deg_{f^n}(p)\leq N,\]
for all $p\in \S^2$ and all $n\in \N$.
\end{lem}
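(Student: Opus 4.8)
The plan is to reduce the whole statement to the multiplicativity of the local degree under composition. The first step is to record the chain rule
\[ \deg_{f^n}(p)=\prod_{k=0}^{n-1}\deg_f\bigl(f^k(p)\bigr),\qquad p\in\S^2,\ n\in\N, \]
which I would obtain by induction on $n$, writing $f^n=f\circ f^{n-1}$ and using the composition formula $\deg_{g\circ h}(p)=\deg_g\bigl(h(p)\bigr)\cdot\deg_h(p)$ for branched covering maps $g,h$ of $\S^2$. That formula is read off from the local normal form $z\mapsto z^d$: in suitable coordinates $h$ near $p$ becomes $z\mapsto z^{\deg_h(p)}$ and $g$ near $h(p)$ becomes $w\mapsto w^{\deg_g(h(p))}$, so $g\circ h$ near $p$ becomes $z\mapsto z^{\deg_h(p)\deg_g(h(p))}$ (alternatively, count the preimages near $p$ of a generic point close to $g(h(p))$). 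The one thing to be careful about is that the chart used for the target of $h$ and the one used for the source of $g$ near $h(p)$ need not coincide, but this is harmless since the local degree is a topological invariant. I would also note the elementary fact that the factor $\deg_f\bigl(f^k(p)\bigr)$ exceeds $1$ exactly when $f^k(p)\in{\rm crit}(f)$, so the displayed product is really a product over those $k$ with $f^k(p)$ critical.

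Granting the chain rule, the forward implication is quick. Fix $p\in\S^2$ and $n\in\N$ and set $J=\{\,k\in\{0,\dots,n-1\}:f^k(p)\in{\rm crit}(f)\,\}$. The key observation is that $k\mapsto f^k(p)$ is injective on $J$: if $k_1<k_2$ both lie in $J$ with $f^{k_1}(p)=f^{k_2}(p)=c$, then $f^{k_2-k_1}(c)=c$ with $k_2-k_1\ge1$, so the critical point $c$ would be periodic, contrary to hypothesis. Hence $\{f^k(p):k\in J\}$ consists of pairwise distinct critical points, and since all local degrees are $\ge1$,
\[ \deg_{f^n}(p)=\prod_{k\in J}\deg_f\bigl(f^k(p)\bigr)\le\prod_{c\in{\rm crit}(f)}\deg_f(c)=:N. \]
Because ${\rm crit}(f)$ is finite, $N\in\N$ is a bound independent of $p$ and $n$, as required.

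For the converse I would argue the contrapositive. If $c\in{\rm crit}(f)$ is periodic, say $f^q(c)=c$ with $q\in\N$, then applying the chain rule to $p=c$ and $n=mq$, the $m$ indices $k=0,q,2q,\dots,(m-1)q$ each contribute a factor $\deg_f(c)\ge2$, and the remaining factors are $\ge1$, so $\deg_{f^{mq}}(c)\ge\deg_f(c)^{m}\ge2^{m}$, which is unbounded as $m\to\infty$. Hence no uniform bound $N$ can exist, completing both implications.

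The only step with genuine content is the chain rule for local degrees, and within it the composition formula $\deg_{g\circ h}(p)=\deg_g(h(p))\deg_h(p)$; everything afterward is bookkeeping with the finitely many critical points together with the elementary remark that a critical point occurring twice along a forward orbit would be periodic. If one prefers, the composition formula can simply be cited as a standard property of branched coverings, in which case the whole argument is very short.
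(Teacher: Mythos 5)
Your proof is correct: the chain rule $\deg_{f^n}(p)=\prod_{k=0}^{n-1}\deg_f(f^k(p))$, the observation that a critical point repeated along a forward orbit segment would be periodic (so without periodic critical points the bound $N=\prod_{c\in{\rm crit}(f)}\deg_f(c)$ works), and the $\deg_f(c)^m$ growth at a periodic critical point for the converse all hold up. The paper itself gives no proof here---it simply cites Lemma 16.1 of \cite{BMExpanding}---and your argument is essentially the standard one used in that reference, so there is nothing further to reconcile.
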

Henceforth we assume that \emph{all Thurston maps have no periodic critical points}.

Let $f\:\S^2\ra \S^2$ be an expanding Thurston map and let $\mathcal{C}$ be a Jordan curve containing $\post(f)$.
\begin{de}\label{joinoppositesides}
A set $K\subseteq \S^2$ \emph{joins opposite sides} of $\mathcal C$ if $\#$$\post(f)\geq 4$
and $K$ meets two disjoint $0$-edges, or if $\#$$\post(f) = 3$ and $K$ meets all
three $0$-edges.
\end{de}

Let $D_n=D_n(f,\mathcal C)$ be the minimum number of $n$-tiles needed to join opposite sides of a Jordan curve $\mathcal{C}$. More precisely,
\begin{eqnarray}\label{defdn}
D_n =\min\{N\in \N: \mbox{ there exist $n$-tiles } X_1, . . . ,X_N \mbox{ such that }  \\
\bigcup_{j=1}^N X_j\mbox{ is connected and joins opposite sides of }\mathcal C\}. \nonumber
\end{eqnarray}
Of course, $D_n$ depends on $f$ and $\mathcal C$.


Let $f$ be an expanding Thurston map. For any two Jordan curves $\mathcal{C}$ and $\mathcal{C'}$ with $\post(f)\subset \mathcal C,\mathcal C'$, inequality (17.1) in \cite{BMExpanding} states that there exists a constant $c>0$ such that for all $n>0$,
\[ \frac1{c}D_n(f,\mathcal C)\leq D_n(f,\mathcal C')\leq c D_n(f,\mathcal C).\]
Proposition 17.1 in \cite{BMExpanding} says that:
\begin{pro} \label{expansionfactor}
For an expanding Thurston map $f\: \S^2\ra \S^2$, and a Jordan curve $\mathcal C$ containing $\post(f)$,
the limit
\[\Lambda_0=\Lambda_0(f):=\lim_{n\ra\infty}D_n(f,\mathcal{C})^{1/n}\] exists and is independent of $\mathcal C$.
\end{pro}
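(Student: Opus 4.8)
The plan is to prove existence of the limit for one conveniently chosen Jordan curve and then propagate it to all curves via the comparison estimate quoted just above. By Theorem~\ref{invariantJordancurve} we may fix $N\in\N$ and a Jordan curve $\widetilde{\mathcal C}\supseteq\post(f)$ with $f^N(\widetilde{\mathcal C})\subseteq\widetilde{\mathcal C}$, equivalently $f^{-N}(\widetilde{\mathcal C})\supseteq\widetilde{\mathcal C}$; then the sets $f^{-kN}(\widetilde{\mathcal C})$ increase with $k$, so the cell decompositions $\D^{kN}(f,\widetilde{\mathcal C})$ form a nested sequence in which every $kN$-tile is a union of $(k+1)N$-tiles. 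By inequality~(17.1) of \cite{BMExpanding}, for every Jordan curve $\mathcal C\supseteq\post(f)$ there is a constant $c\ge1$ with $c^{-1}D_n(f,\widetilde{\mathcal C})\le D_n(f,\mathcal C)\le c\,D_n(f,\widetilde{\mathcal C})$ for all $n$. Hence it suffices to show that $\lim_{n\to\infty}D_n(f,\widetilde{\mathcal C})^{1/n}$ exists: the two-sided bound then forces $\lim_{n\to\infty}D_n(f,\mathcal C)^{1/n}$ to exist and to equal $\lim_{n\to\infty}D_n(f,\widetilde{\mathcal C})^{1/n}$ for every such $\mathcal C$, which is precisely the asserted independence of $\mathcal C$.

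The main step is a quasi-submultiplicativity for $\widetilde{\mathcal C}$ along multiples of $N$: there is $C\ge1$ with
\[ D_{(k+k')N}(f,\widetilde{\mathcal C})\le C\,D_{kN}(f,\widetilde{\mathcal C})\,D_{k'N}(f,\widetilde{\mathcal C})\qquad(k,k'\in\N).\]
To see this, take an optimal chain $Y_1,\dots,Y_p$ of $kN$-tiles ($p=D_{kN}(f,\widetilde{\mathcal C})$) whose union is connected and joins opposite sides of $\widetilde{\mathcal C}$. By Lemma~\ref{tilenumber}(1), $f^{kN}$ restricts to a homeomorphism of each $Y_i$ onto a $0$-tile, carrying the $(k+k')N$-cells contained in $Y_i$ to the $k'N$-cells contained in that $0$-tile. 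Replace $Y_i$ by a chain of $(k+k')N$-tiles contained in $Y_i$ connecting the two $kN$-edges of $\partial Y_i$ along which $Y_i$ meets its neighbours in the chain (for $Y_1$ and $Y_p$, one of these edges is replaced by a $0$-edge of $\widetilde{\mathcal C}$). Transported through $f^{kN}|_{Y_i}$, the length needed is at most the number of $k'N$-tiles required inside a single $0$-tile to connect a prescribed pair of its $0$-edges, and this is bounded by a fixed multiple of $D_{k'N}(f,\widetilde{\mathcal C})$. Concatenating the refined pieces yields a connected chain of at most a constant times $D_{kN}(f,\widetilde{\mathcal C})\,D_{k'N}(f,\widetilde{\mathcal C})$ many $(k+k')N$-tiles joining opposite sides of $\widetilde{\mathcal C}$, giving the displayed bound. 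Consequently $k\mapsto\log\!\bigl(C\,D_{kN}(f,\widetilde{\mathcal C})\bigr)$ is subadditive, so Fekete's lemma gives that $\Lambda_0:=\lim_{k\to\infty}D_{kN}(f,\widetilde{\mathcal C})^{1/(kN)}$ exists; by the crude bounds $1\le D_n\le2(\deg f)^n$ from Lemma~\ref{tilenumber}(3) it is finite (and at least $1$).

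It remains to pass from multiples of $N$ to all $n$. Under the standing hypothesis that $f$ has no periodic critical points, the local degrees $\deg_{f^m}(\cdot)$ are uniformly bounded (Lemma~\ref{noperiodic}), which gives the cell decompositions a uniform bounded-geometry property; in particular a given $n$-tile meets only a uniformly bounded number of $(n\pm1)$-tiles. Covering each tile of an optimal chain by the tiles of the adjacent level that it meets produces a connected family that still contains the original chain and still joins opposite sides, so $D_{n+1}(f,\widetilde{\mathcal C})\asymp D_n(f,\widetilde{\mathcal C})$ with a constant independent of $n$. Iterating at most $N$ times yields $D_n(f,\widetilde{\mathcal C})\asymp D_{N\lfloor n/N\rfloor}(f,\widetilde{\mathcal C})$, hence $D_n(f,\widetilde{\mathcal C})^{1/n}\to\Lambda_0$; together with the first paragraph this proves the proposition.

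The step I expect to be the main obstacle is the combinatorial estimate used in the second paragraph: bounding, uniformly in $k'$, the number of $k'N$-tiles needed inside a fixed $0$-tile to connect a prescribed pair of its $0$-edges by $O\!\bigl(D_{k'N}(f,\widetilde{\mathcal C})\bigr)$, and arranging the local refinements so that consecutive refined pieces actually glue along the shared $kN$-edges. The other ingredients — Fekete's lemma, the two-sided bound $1\le D_n\le2(\deg f)^n$, the comparison of adjacent levels, and the transfer across Jordan curves via inequality~(17.1) — are routine.
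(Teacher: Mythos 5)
Note first that the paper does not prove this proposition at all: it is quoted verbatim from Proposition 17.1 of \cite{BMExpanding}, so the only meaningful comparison is with the standard argument there. Your overall architecture (pass to an $f^N$-invariant curve $\widetilde{\mathcal C}$, prove a multiplicative inequality along multiples of $N$, apply Fekete, compare adjacent levels, transfer to arbitrary curves via inequality (17.1)) is reasonable, but the step you yourself flag as the main obstacle is not just unproven --- the claim it rests on is false in general. You need, uniformly in $k'$, that a \emph{prescribed} pair of disjoint $0$-edges can be joined inside a \emph{prescribed} $0$-tile by at most a constant times $D_{k'N}$ tiles of level $k'N$. Consider the Latt\`es-type map obtained from $(x,y)\mapsto(2x,3y)$ on the torus $\R^2/\Z^2$, quotiented by $z\mapsto -z$: this is an expanding Thurston map of degree $6$ with no periodic critical points, the pillow seam $\mathcal C$ through the four postcritical points is $f$-invariant (so $N=1$), and the $n$-tiles are combinatorial rectangles of horizontal size comparable to $2^{-n}$ and vertical size comparable to $3^{-n}$. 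A horizontal row of about $2^n$ tiles joins the two ``vertical'' $0$-edges, so $D_n\asymp 2^n$, whereas any connected union of $n$-tiles meeting both ``horizontal'' $0$-edges must contain at least about $3^n$ tiles (project to the vertical axis: each tile contributes vertical extent at most a constant times $3^{-n}$). Thus the constrained crossing number for the wrong pair of edges is exponentially larger than $D_{k'N}$, and your tile-by-tile refinement cannot be justified as written: nothing guarantees that the two cells through which a minimal $kN$-chain enters and leaves a given tile $Y_i$ map under $f^{kN}$ to a ``cheap'' pair of $0$-cells. (The submultiplicative inequality itself is consistent with $D_n\asymp\Lambda_0^n$, but your proposed proof of it collapses.) There is also a secondary issue when $\#\post(f)=3$, where joining opposite sides means meeting all three $0$-edges and your refinement bookkeeping does not obviously preserve this.

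The standard way around this difficulty --- and the route taken in \cite{BMExpanding} --- is to prove the \emph{opposite}, supermultiplicative inequality $D_{(k+k')N}(f,\widetilde{\mathcal C})\geq D_{kN}(f,\widetilde{\mathcal C})\,D_{k'N}(f,\widetilde{\mathcal C})$ for the invariant curve. There one starts from a minimal chain of $(k+k')N$-tiles joining opposite sides, shows it must genuinely cross at least $D_{kN}$ tiles of level $kN$ (this is where the real combinatorial work lies), and observes that the portion of the chain crossing such a tile, pushed forward by $f^{kN}$, is a connected union of $k'N$-tiles joining two disjoint $0$-edges, hence contains at least $D_{k'N}$ tiles: the definition of $D$ as a minimum supplies the per-tile estimate for free, which is exactly what your upper-bound direction lacks. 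Combined with $1\leq D_n\leq 2(\deg f)^n$, Fekete again gives the limit along multiples of $N$; your remaining ingredients (the adjacent-level comparison, which in your version uses the paper's standing no-periodic-critical-points hypothesis, and the transfer between Jordan curves via (17.1)) are fine. So the skeleton of your argument can be salvaged, but the central multiplicativity step must be replaced by the lower-bound (supermultiplicative) version rather than the constructive refinement you propose.
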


We call $\Lambda_0(f)$ the \emph{combinatorial expansion factor} of $f$.

Proposition 17.2 in \cite{BMExpanding} states that:
\begin{pro}
If $f\:\S^2\ra \S^2$ and $g\:\S^2_1\ra \S^2_1$ are expanding Thurston maps that are topologically conjugate, then $\Lambda_0(f)=\Lambda_0(g)$.
\end{pro}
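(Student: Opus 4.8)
The plan is to transport everything through the conjugating homeomorphism. Let $h\colon \S^2 \ra \S^2_1$ be a homeomorphism with $h\circ f = g\circ h$, so that $h\circ f^n = g^n\circ h$ for all $n\in\N$. Since $g = h\circ f\circ h^{-1}$ and local injectivity of a branched covering is preserved under pre- and post-composition with homeomorphisms, the critical points correspond, i.e.\ $h({\rm crit}(f)) = {\rm crit}(g)$, and therefore
\[ h(\post(f)) = h\Big(\bigcup_{n\in\N} f^n({\rm crit}(f))\Big) = \bigcup_{n\in\N} g^n({\rm crit}(g)) = \post(g). \]
In particular, for any Jordan curve $\mathcal C\subseteq\S^2$ with $\mathcal C\supseteq\post(f)$, the image $\mathcal C' := h(\mathcal C)$ is a Jordan curve in $\S^2_1$ with $\mathcal C'\supseteq\post(g)$, so the cell decompositions $\D^n(g,\mathcal C')$ are defined.

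Next I would check that $h$ carries $\D^n(f,\mathcal C)$ onto $\D^n(g,\mathcal C')$, preserving dimension. By Lemma~\ref{tilenumber}(4), the $n$-edges of $(f,\mathcal C)$ are the closures of the connected components of $f^{-n}(\mathcal C)\setminus f^{-n}(\post(f))$ and the $n$-tiles of $(f,\mathcal C)$ are the closures of the connected components of $\S^2\setminus f^{-n}(\mathcal C)$; the relation $h\circ f^n = g^n\circ h$ gives $h(f^{-n}(\mathcal C)) = g^{-n}(\mathcal C')$ and $h(f^{-n}(\post(f))) = g^{-n}(\post(g))$, and a homeomorphism carries connected components to connected components and closures to closures. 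Hence $h$ restricts to a bijection from the set of $n$-tiles of $(f,\mathcal C)$ onto the set of $n$-tiles of $(g,\mathcal C')$, and it maps the $0$-edges of $\mathcal C$ onto the $0$-edges of $\mathcal C'$.

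Finally I would deduce $D_n(f,\mathcal C) = D_n(g,\mathcal C')$ for every $n$. If $X_1,\dots,X_N$ are $n$-tiles of $(f,\mathcal C)$ whose union is connected and joins opposite sides of $\mathcal C$, then $h(X_1),\dots,h(X_N)$ are $n$-tiles of $(g,\mathcal C')$; since $h$ is a homeomorphism it preserves connectedness of the union, and since it maps $0$-edges of $\mathcal C$ onto $0$-edges of $\mathcal C'$ and $\#\post(f) = \#\post(g)$, it preserves the property of joining opposite sides (Definition~\ref{joinoppositesides}). Thus $D_n(g,\mathcal C')\leq D_n(f,\mathcal C)$, and applying the same argument to $h^{-1}$ gives the reverse inequality, so $D_n(f,\mathcal C) = D_n(g,\mathcal C')$ for all $n$. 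Therefore, by Proposition~\ref{expansionfactor} applied to $f$ and to $g$,
\[ \Lambda_0(f) = \lim_{n\ra\infty} D_n(f,\mathcal C)^{1/n} = \lim_{n\ra\infty} D_n(g,\mathcal C')^{1/n} = \Lambda_0(g). \]
No step here is genuinely hard; the only thing requiring care is the bookkeeping that $h$ respects all the combinatorial data — that it preserves $\post$, sends $n$-tiles to $n$-tiles and $0$-edges to $0$-edges — which reduces to the characterization of tiles and edges via connected components of $f^{-n}(\mathcal C)$ in Lemma~\ref{tilenumber}(4) together with the conjugacy relation $h\circ f^n = g^n\circ h$.
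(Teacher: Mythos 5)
Your argument is correct. The paper does not prove this statement itself—it simply quotes it as Proposition 17.2 of \cite{BMExpanding}—and your proof is the natural (and essentially standard) one behind that citation: transport the combinatorial data through the conjugacy $h$ using $h(\post(f))=\post(g)$ and $h(f^{-n}(\mathcal C))=g^{-n}(h(\mathcal C))$, so that $n$-tiles, $0$-edges, connectedness and the ``joins opposite sides'' condition all correspond, giving $D_n(f,\mathcal C)=D_n(g,h(\mathcal C))$ for all $n$, and then conclude via the curve-independence of the limit in Proposition~\ref{expansionfactor}.
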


\begin{de}
Let $f \: \S^2 \ra \S^2$ be an expanding Thurston map, and let
${\mathcal C}\subseteq \S^2$ be a Jordan curve containing $\post(f) $. Let $x, y \in \S^2$.
For $x \not= y$ we define
\begin{eqnarray*}
m_{f,\mathcal C}(x, y) = \min\{n\in \N_0 :\mbox{ there exist disjoint $n$-tiles }X \mbox{ and } Y \\
\mbox{ for }(f, \mathcal C)  \mbox{ with } x\in X \mbox{ and } y\in Y \}.
\end{eqnarray*}
If $x = y$, we define $m_{f,\mathcal C}(x, x)= \infty$.
\end{de}

The minimum in the definition above is always obtained since the
diameters of $n$-tiles go to $0$ as $n\ra \infty$. We usually drop one or both
subscripts in $m_{f,\mathcal C}(x, y)$ if $f$ or $\mathcal C$ is clear from the context. If
we define for $x,y\in \S^2$ and $x \not= y$,
\begin{eqnarray*}
m'_{f,\mathcal C}(x, y) = \max\{n\in \N_0 : \mbox{ there exist nondisjoint $n$-tiles }X \mbox{ and } Y  \\
\mbox{ for } (f, \mathcal C) \mbox{ with } x\in X \mbox{ and } y\in Y \},
\end{eqnarray*}
then $m_{f,\mathcal C}$ and $m'_{f,\mathcal C}$ are essentially the same up to a constant (see Lemma 8.6 (v) in \cite{BMExpanding}).
\begin{lem} \label{twom}
Let $m_{f,\mathcal C}$ and $m'_{f,\mathcal C}$ as defined above. There exists a constant $k>0$, such that for any $x,y\in \S^2$ and $x\not=y$,
\[ m_{f,\mathcal C}(x,y)-k\leq m'_{f,\mathcal C}(x,y)\leq m_{f,\mathcal C}(x,y)+1.\]
\end{lem}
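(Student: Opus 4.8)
The two inequalities have rather different flavours, so I would prove them separately.

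\emph{The left inequality.} For $x\ne y$ both $m_{f,\mathcal C}(x,y)$ and $m'_{f,\mathcal C}(x,y)$ are finite: since ${\rm mesh}(f,n,\mathcal C)\to 0$, every $n$-tile through $x$ is disjoint from every $n$-tile through $y$ once $n$ is large (and, incidentally, $m_{f,\mathcal C}(x,y)\ge 1$, as the only two $0$-tiles meet along $\mathcal C$). The crucial point is the covering property from Lemma~\ref{tilenumber}: for every $n$ the $n$-tiles cover $\S^2$, so there are $n$-tiles $X\ni x$ and $Y\ni y$, and the pair $(X,Y)$ is either disjoint or not. Hence at \emph{every} scale $n$ there is a disjoint pair of $n$-tiles $X\ni x$, $Y\ni y$, or a nondisjoint one. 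Applying this at $n=m'_{f,\mathcal C}(x,y)+1$, where maximality of $m'_{f,\mathcal C}$ rules out a nondisjoint pair, produces a disjoint pair; hence $m_{f,\mathcal C}(x,y)\le m'_{f,\mathcal C}(x,y)+1$. This proves the left inequality, in fact with $k=1$.

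\emph{The right inequality.} This is where the combinatorics of the cell decompositions is needed, and here the plan is to pass to $n$-flowers. Write $W^n(p)$ for the union of the interiors of the $n$-cells containing $p$, so that the closure $\overline{W^n(p)}$ is the union of the $n$-tiles containing $p$. One checks directly that nondisjoint $n$-tiles $X\ni x$, $Y\ni y$ exist if and only if $\overline{W^n(x)}\cap\overline{W^n(y)}\ne\emptyset$, so that $m'_{f,\mathcal C}(x,y)=\max\{n:\overline{W^n(x)}\cap\overline{W^n(y)}\ne\emptyset\}$. I would then invoke the flower lemmas of \cite{BMExpanding} — in particular a nesting of the form $\overline{W^{n+1}(p)}\subseteq W^n(p)$ — together with the bounded-degree hypothesis (Lemma~\ref{noperiodic}), which bounds the number of $n$-tiles in a flower uniformly in $n$. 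Using these, one shows that if $x$ and $y$ lie in disjoint $m$-tiles then their flowers separate after a bounded number of further subdivisions, i.e. $\overline{W^{m+c}(x)}\cap\overline{W^{m+c}(y)}=\emptyset$ for a constant $c$ independent of $x,y$; this gives $m'_{f,\mathcal C}(x,y)<m_{f,\mathcal C}(x,y)+c$, and a careful bookkeeping of constants as in \cite{BMExpanding} sharpens the conclusion to $m'_{f,\mathcal C}(x,y)\le m_{f,\mathcal C}(x,y)+1$.

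\emph{The main obstacle.} The hard step is this last one, and the reason it is hard is the interplay between $n$-vertices and the non-nestedness of the decompositions $\D^n$ when $\mathcal C$ is not $f$-invariant: a single point may lie in many $n$-tiles, so "$x$ and $y$ lie in disjoint $n$-tiles" is strictly weaker than "the flowers of $x$ and $y$ are disjoint", and an $n$-tile need not be contained in a single $(n-1)$-tile. The flower lemmas of \cite{BMExpanding}, which are valid for an arbitrary Jordan curve $\mathcal C\supseteq\post(f)$, are designed precisely to let one argue with flowers (which do nest) rather than with individual tiles, and the bounded local degree keeps the flowers of uniformly bounded combinatorial size so that the additive errors remain uniform in $x,y$. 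The conceptually cleanest route, however, is to first establish the statement for an $f^N$-invariant Jordan curve, which exists by Theorem~\ref{invariantJordancurve}: there the decompositions genuinely refine one another, each $n$-tile sits in a unique $(n-1)$-tile, and the flower argument becomes transparent; one then transfers the conclusion to a general $\mathcal C$ by the curve-comparison lemmas of \cite{BMExpanding}.
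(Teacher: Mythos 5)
First, note that the paper does not prove Lemma~\ref{twom} at all: it is quoted from \cite{BMExpanding}*{Lemma 8.6(v)}, so your argument can only be judged on its own merits. Your proof of the left inequality is correct and is the standard one: at level $n=m'_{f,\mathcal C}(x,y)+1$ the $n$-tiles cover $\S^2$ (Lemma~\ref{tilenumber}), maximality of $m'$ forbids a nondisjoint pair, so the pair you pick is disjoint and $m_{f,\mathcal C}(x,y)\le m'_{f,\mathcal C}(x,y)+1$, i.e.\ the left inequality holds with $k=1$.

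The right inequality is where your proposal has a genuine gap, and in fact you are aiming at the wrong target. The nesting $\overline{W^{n+1}(p)}\subseteq W^n(p)$ that your flower argument hinges on is not available here: for a general (non-invariant) curve $\mathcal C$ the decompositions $\D^{n+1}$ and $\D^n$ are not comparable, an $(n+1)$-tile through $p$ need not lie in any single $n$-tile, and the flower lemmas of \cite{BMExpanding} are statements about flowers of \emph{vertices} with an additive level shift $k_0$ depending on $(f,\mathcal C)$, not a one-step nesting at arbitrary points. Granting that machinery, your argument yields exactly what it should yield, namely $m'_{f,\mathcal C}(x,y)\le m_{f,\mathcal C}(x,y)+c$ for some constant $c=c(f,\mathcal C)$; the final ``careful bookkeeping sharpens the conclusion to $+1$'' is not a proof step, and no bookkeeping will produce it: if $X\ni x$, $Y\ni y$ are nondisjoint $m'$-tiles then $d(x,y)\le 2C\Lambda^{-m'}$, while a disjoint pair of $m$-tiles through $x,y$ gives $d(x,y)\ge C^{-1}\Lambda^{-m}$ (Lemma~\ref{charvisual}), whence $m'\le m+\log_\Lambda(2C^2)$ --- and this additive constant genuinely depends on $f$ and $\mathcal C$ through $C$, since disjoint $n$-tiles can be at distance as small as $C^{-1}\Lambda^{-n}$ while $(n+2)$-flowers can have diameter as large as a multiple of $\Lambda^{-n-2}$. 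The same loss of additive constants occurs in your proposed reduction to an $f^N$-invariant curve via Theorem~\ref{invariantJordancurve} and the curve-comparison lemmas, so that route cannot recover ``$+1$'' either. In short: the sharp constant $1$ belongs to the inequality you proved first ($m\le m'+1$), while on the other side one can only prove $m'\le m+k$ with $k=k(f,\mathcal C)$, which is what \cite{BMExpanding}*{Lemma 8.6(v)} asserts; the displayed statement in this paper appears to have the roles of $1$ and $k$ interchanged, and your attempt to defend the literal ``$m'\le m+1$'' chases that misprint rather than a provable bound. Replacing your second half by the two-line comparison with Lemma~\ref{charvisual} above (or by the combinatorial fact that a connected union of two $(n+k)$-tiles cannot meet two disjoint $n$-cells once $k$ exceeds a constant) gives a complete and correct proof of the corrected statement.
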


\begin{de}\label{visual}
Let $f \: \S^2\ra \S^2$ be an expanding Thurston map and
$d$ be a metric on $\S^2$. The metric $d$ is called a \emph{visual metric} for $f$ if there
exists a Jordan curve $\mathcal C\subseteq \S^2$ containing $\post(f) $, constants $\Lambda > 1$ and  $C \geq 1$ such that
\[\frac1{C}\Lambda^{-m_{f,\mathcal C}(x, y)} \leq d(x, y) \leq C\Lambda^{-m_{f,\mathcal C}(x, y)}\]
for all $x, y \in \S^2$.
\end{de}

Proposition 8.9 in \cite{BMExpanding} states that for any expanding Thurston map $f\:\S^2\ra \S^2$, there exists a visual metric for $f$, which induces the standard topology on $\S^2$. Lemma 8.10 in the same paper gives the following characterization of visual metrics.
\begin{lem} \label{charvisual}
Let $f \: \S^2 \ra \S^2$ be an expanding Thurston map. Let $\mathcal C\subseteq \S^2$
be a Jordan curve containing $\post(f)$, and $d$ be a visual metric for $f$ with
expansion factor $\Lambda > 1$. Then there exists a constant $C > 1$ such that
\begin{enumerate}
  \item $d(\sigma,\tau)\geq (1/C)\Lambda^{-n}$ whenever $\sigma$ and $\tau$ are disjoint $n$-cells,
  \item $(1/C)\Lambda^{-n}\leq \diam (\tau)\leq C\Lambda^{-n}$ for $\tau$ as any $n$-edge or $n$-tile.
\end{enumerate}
Conversely, if $d$ is a metric on $\S^2$ satisfying conditions $(1)$ and $(2)$
for some constant $C>1$, then $d$ is a visual metric with expansion
factor $\Lambda > 1$.
\end{lem}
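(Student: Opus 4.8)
The plan is to prove the two implications of the characterization separately; the converse is an essentially formal translation, whereas the forward direction rests on the combinatorics of the cell decompositions from \cite{BMExpanding}.

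For the converse, suppose $d$ is a metric satisfying (1) and (2) for some $C>1$, the $n$-cells being those of the curve $\mathcal C$ in the statement. I would show that $d$ is then a visual metric for $f$, witnessed by this same $\mathcal C$ and with expansion factor $\Lambda$. Fix $x\neq y$ and set $n=m_{f,\mathcal C}(x,y)$; note $n\ge1$, since the two $0$-tiles always meet along $\mathcal C$. On the one hand there are disjoint $n$-tiles $X\ni x$ and $Y\ni y$, so (1) gives $d(x,y)\ge d(X,Y)\ge (1/C)\Lambda^{-n}$. On the other hand, minimality of $n$ forces every $(n-1)$-tile $X'$ through $x$ to meet every $(n-1)$-tile $Y'$ through $y$; choosing one of each and applying (2) yields $d(x,y)\le\diam(X')+\diam(Y')\le 2C\Lambda^{-(n-1)}=2C\Lambda\cdot\Lambda^{-n}$. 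Hence the defining inequality of a visual metric holds with constant $2C\Lambda$.

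For the forward direction, let $d$ be a visual metric for $f$, say with respect to a Jordan curve $\mathcal C_0$ and with constant $C_0$ and expansion factor $\Lambda$. Because (1) and (2) refer to the cells of the given curve $\mathcal C$, which in general differs from $\mathcal C_0$, the first step is to note that $m_{f,\mathcal C}$ and $m_{f,\mathcal C_0}$ differ by at most a constant depending only on $f,\mathcal C,\mathcal C_0$ --- this uses the mutual covering of tiles and flowers for different Jordan curves established in \cite{BMExpanding} --- so that, after enlarging the constant, $d$ also satisfies the defining inequality with respect to $\mathcal C$; I may therefore assume $\mathcal C_0=\mathcal C$ and write $m=m_{f,\mathcal C}$. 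The upper bound in (2) is then quick: if $\tau$ is an $n$-tile or $n$-edge and $x,y\in\tau$, then $\tau$ is contained in some $n$-tile and so $x,y$ lie in a common $n$-tile, whence $m'_{f,\mathcal C}(x,y)\ge n$; Lemma~\ref{twom} gives $m(x,y)\ge n-1$, hence $d(x,y)\le C_0\Lambda^{-(n-1)}$, and taking the supremum bounds $\diam_d\tau\le C_0\Lambda\cdot\Lambda^{-n}$.

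The lower bound in (1) is the heart of the matter, and here I would invoke the combinatorial separation estimate of \cite{BMExpanding}: there is a constant $k_0$, depending only on $f$ and $\mathcal C$, such that whenever $\sigma$ and $\tau$ are disjoint $n$-cells, any point of $\sigma$ and any point of $\tau$ lie in disjoint $(n+k_0)$-tiles, i.e.\ $m(x,y)\le n+k_0$ for $x\in\sigma$, $y\in\tau$. This is where I expect the real work to lie; it is proved from the flower lemmas together with the fact that $\#\post(f)\ge 3$ --- which rules out two distinct $n$-edges sharing both endpoints --- and it is also the point at which the standing assumption that $f$ has no periodic critical points enters, via the uniform bound on local degrees in Lemma~\ref{noperiodic}, to make $k_0$ independent of $n$. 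Granting it, $d(x,y)\ge (1/C_0)\Lambda^{-(n+k_0)}$ for all $x\in\sigma$, $y\in\tau$, which is (1). Finally, the lower bound in (2) follows from (1): an $n$-tile $P$ has $\#\post(f)\ge 3$ boundary edges, hence contains an $n$-vertex $v$ and an $n$-edge $e\subseteq\partial P$ with $v\notin e$, so $\diam_d P\ge d(\{v\},e)\ge (1/C)\Lambda^{-n}$; and an $n$-edge has two distinct $n$-vertices $a,b$ as endpoints, so $\diam_d e\ge d(\{a\},\{b\})\ge (1/C)\Lambda^{-n}$. Replacing $C$ by a large enough constant absorbing all those produced above finishes the proof.
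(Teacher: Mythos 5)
Your argument is correct, and note that the paper itself gives no proof of this statement: Lemma~\ref{charvisual} is quoted from \cite{BMExpanding} (Lemma 8.10 there), and your sketch essentially reconstructs the Bonk--Meyer argument --- the converse via comparing $d(x,y)$ with disjoint tiles at level $m_{f,\mathcal C}(x,y)$ and intersecting tiles at level $m_{f,\mathcal C}(x,y)-1$, and the forward direction via the reduction to the witnessing curve, Lemma~\ref{twom} for the diameter upper bound, the separation estimate $m(x,y)\le n+k_0$ for condition (1), and the deduction of the diameter lower bound from (1) using distinct vertices and a non-adjacent edge of an $m$-gon with $m=\#\post(f)\ge 3$. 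The only inaccuracy is your parenthetical claim that the uniformity of $k_0$ is where the standing no-periodic-critical-points assumption (Lemma~\ref{noperiodic}) enters: that hypothesis is not needed; the uniform $k_0$ comes from expansion alone (mesh tending to zero, equivalently every $(n+k_0)$-tile lying in an $n$-flower uniformly in $n$), and Bonk--Meyer prove the separation estimate, and hence the whole lemma, for arbitrary expanding Thurston maps. Since you invoke that estimate as a quoted result rather than proving it, the mis-attribution does not create a gap in your proof.
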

The combinatorial expansion factor $\Lambda_0(f)$ (defined after Proposition \ref{expansionfactor}) is the supremum of the expansion factors for all the visual metric for $f$ (see Theorem 1.7 in \cite{BMExpanding}).
\begin{thm} \label{expansionvisualsup}
Let $f \: \S^2 \ra \S^2$ be an expanding Thurston map with combinatorial expansion factor $\Lambda_0(f)$.  Then
\begin{eqnarray*}
\Lambda_0:=\sup\{\Lambda>1\:& \mbox{there exists a visual metric for } f \\
 & \mbox{ with expansion factor }\Lambda \}.
\end{eqnarray*}
\end{thm}

\section{Gromov Hyperbolic Spaces}
\noindent
In this section, we review the definitions of Gromov hyperbolic spaces and the asymptotic upper curvature for Gromov hyperbolic spaces.

Let us first review some basic facts about Gromov hyperbolic spaces. We refer the reader to \cite{BSElements} as a general source on Gromov hyperbolic spaces. Let $(X,d)$ be a geodesic metric space. For any points $x,y,p\in X$, the \emph{Gromov product} $(x,y)_p$ of $x$ and $y$ with respect to base point $p$ is defined as
\begin{eqnarray} \label{gproduct}
  (x,y)_p := \frac12 \left[d(x,p)+d(y,p)-d(x,y) \right].
\end{eqnarray}
The space $X$ is called \emph{$\delta$-hyperbolic} (or Gromov hyperbolic) for some $\delta\geq 0$ if there exists a base point $p\in X$, such that for all $x,y,z\in X$ we have
\begin{eqnarray} \label{trianglein}
  (x,y)_p\geq \min\{(x,z)_p,(z,y)_p\}-\delta.
\end{eqnarray}
If this inequality holds for some base point $p\in X$, then it also holds for any other $p'\in X$ with $\delta$ being replaced by $2\delta$.

Let $(X,d)$ be a Gromov hyperbolic metric space with a fixed base point $p\in X$. A sequence of points $\{x_i\}\subseteq X$ \emph{converges to infinity} if
\begin{eqnarray*}
\lim_{i,j\ra\infty} (x_i,x_j)_p=\infty.
\end{eqnarray*}
This property of a sequence $\{x_i\}$ does not depend on the base point $p\in X$. We say two sequences converging to infinity $\{x_i\}$ and $\{x_i'\}$ are \emph{equivalent} if
\begin{eqnarray*}
\lim_{i\ra\infty} (x_i,x_i')_p=\infty.
\end{eqnarray*}
The \emph{boundary at infinity} $\partial_{\infty} X$ of $X$ is defined to be the set of equivalence classes of sequences of points converging to infinity. One can also define the \emph{Gromov product} for points $\xi,\xi'\in \partial_{\infty}X$ and $p\in X$ as
\begin{eqnarray*}
(\xi,\xi')_p:=\inf \liminf_{i\ra\infty}(x_i,x_i')_p
\end{eqnarray*}
where the infimum is taken over all sequences $\{x_i\}\in \xi$ and $\{x_i'\}\in \xi'$. Here $(\xi,\xi')_p=\infty$ if and only if $\xi=\xi'$.

A metric $\rho$ on the boundary at infinity  $\partial_{\infty} X$ of a Gromov hyperbolic space $X$ is called \emph{visual} if there exist $p\in X$, $\Lambda>1$ and $k\geq 1$ such that for all $\xi,\xi'\in \partial_{\infty}X$, we have that
\begin{eqnarray} \label{visualg}
\frac1{k}\Lambda^{-(\xi,\xi')_p}\leq \rho(\xi,\xi') \leq k \Lambda^{-(\xi,\xi')_p}.
\end{eqnarray}
We call the constant $\Lambda$ in this inequality the \emph{expansion factor} of the visual metric $\rho$.
Recall that we also defined a visual metric for an expanding Thurston map (see Definition 2.11).
When it is not clear from context, we will refer to the visual metric defined in \eqref{visualg} as a `visual metric in the Gromov hyperbolic sense'.

Given two metric spaces $(X,d_X)$ and $(Y,d_Y)$,  a map $f\:X\ra Y $ is called a \emph{quasi-isometry} if there are constants $\lambda\geq 1$ and $k\geq 0$ such that for all $x,x'\in X$
\[\frac1{\lambda}d_X(x,x')-k\leq d_Y(f(x),f(x'))\leq \lambda d_X(x,x')+k \] and for all $y\in Y$,
\[\inf_{x\in X} d_Y(f(x),y)\leq k. \]
If $\lambda=1$, we call the map $f$ a \emph{rough-isometry}. We say that the spaces $X$ and $Y$ are \emph{quasi-isometric (rough-isometric)} if there is a quasi-isometry (rough-isometry) between them.

In \cite{BFAsymptotic}, Bonk and Foertsch introduced the notion of upper curvature bounds for Gromov hyperbolic spaces up to rough-isometry (see \cite[Definition 1.1 and 1.2]{BFAsymptotic}).
\begin{de}
Let $\kappa\in [-\infty,0)$. We call a metric space $X$ an \emph{AC$_u(\kappa)$-space} if there exists $p\in X$ and a constant $c\geq 0$ such that for all $x,x'\in X$ and all finite sequences $x_0=x,x_1,\ldots,x_n=x'$ in $X$ with $n>0$,
\begin{eqnarray}\label{acspaceeq}
(x,x')_p\geq \min_{i=1,\ldots,n}(x_{i-1},x_i)_p-\frac1{\sqrt{-\kappa}}\log n-c.
\end{eqnarray}
Here we use the convention $1/\sqrt{\infty}=0$.
We call
\begin{eqnarray*}
K_u(X):=\inf\{\kappa\: X \mbox{ is an AC$_u(\kappa)$-space}\in [-\infty,0) \}
\end{eqnarray*}
the \emph{asymptotic upper curvature} of $X$.
\end{de}
Rough-isometric Gromov hyperbolic spaces have the same asymptotic upper curvature since under rough-isometries, Gromov products only change by a fixed additive amount, which can be absorbed in the constant $c$ in \eqref{acspaceeq}.

The asymptotic upper curvature is related to the expansion factors of visual metrics in Gromov hyperbolic spaces, due to the following theorem \cite[Theorem 1.5]{BFAsymptotic}.
\begin{thm} \label{acku}
Let $X$ be a Gromov hyperbolic metric space. If there exists a visual metric on $\partial_{\infty}X$ with expansion factor $\Lambda>1$, then $X$ is an AC$_u(\kappa)$-space with $\kappa=-\log^2\Lambda$. Conversely, if $X$ is an AC$_u(\kappa)$-space, then for every $1<\Lambda<e^{\sqrt{-\kappa}}$, there exists a visual metric on $\partial_{\infty}X$ with expansion $\Lambda$.
In particular,
\[K_u(X)=-\log^2\Lambda_0,\]where
\begin{eqnarray*}
\Lambda_0:=\sup\{\Lambda>1\:& \mbox{there exists a visual metric on } \partial_{\infty}X \\
 & \mbox{ with expansion factor }\Lambda \}.
\end{eqnarray*}
\end{thm}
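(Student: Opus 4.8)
The plan is to prove the two implications separately and then read off the formula for $K_u(X)$. Throughout I write $\rho_p(\xi,\xi'):=\Lambda^{-(\xi,\xi')_p}$ for $\xi,\xi'\in\partial_{\infty}X$, with $\rho_p(\xi,\xi)=0$; by $\delta$-hyperbolicity of the boundary Gromov product this is a quasi-metric, $\rho_p(\xi,\xi'')\le C_0\max\{\rho_p(\xi,\xi'),\rho_p(\xi',\xi'')\}$ for a constant $C_0=C_0(\delta,\Lambda)$. The whole statement is a dictionary between the chaining behaviour of $\rho_p$ and the defining inequality \eqref{acspaceeq}, the translation resting on the arithmetic identity that for $\kappa=-\log^{2}\Lambda$ one has $1/\sqrt{-\kappa}=1/\log\Lambda$, so that $\tfrac1{\sqrt{-\kappa}}\log n=\log_{\Lambda}n$.

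\textbf{Forward direction.} Suppose $\rho$ is a visual metric on $\partial_{\infty}X$ with expansion factor $\Lambda$, so $\rho\asymp\rho_p$ with some multiplicative constant $k\ge1$. For a finite sequence $\xi_0,\dots,\xi_n$ in $\partial_{\infty}X$ the triangle inequality gives $\rho(\xi_0,\xi_n)\le\sum_{i=1}^{n}\rho(\xi_{i-1},\xi_i)\le n\max_{i}\rho(\xi_{i-1},\xi_i)$; rewriting both sides via $\rho\asymp\rho_p$ and taking $\log_{\Lambda}$ yields
\[(\xi_0,\xi_n)_p\ \ge\ \min_{1\le i\le n}(\xi_{i-1},\xi_i)_p-\log_{\Lambda}n-2\log_{\Lambda}k,\]
which is \eqref{acspaceeq} for boundary points with $\kappa=-\log^{2}\Lambda$. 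To obtain the same inequality for arbitrary finite sequences $x_0,\dots,x_n$ in $X$ I would use the standard reduction identifying the AC$_u(\kappa)$ condition on $X$ with its analogue on $\partial_{\infty}X$ up to an adjustment of the constant $c$: each $x_i$ is replaced by the endpoint of a geodesic ray based at $p$ through (or near) $x_i$, and the Gromov products change only by a bounded additive amount controlled by $\delta$. Hence $X$ is an AC$_u(-\log^{2}\Lambda)$-space.

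\textbf{Reverse direction.} Suppose $X$ is an AC$_u(\kappa)$-space and fix $\Lambda$ with $1<\Lambda<e^{\sqrt{-\kappa}}$, so that $\alpha:=\log\Lambda/\sqrt{-\kappa}\in(0,1)$. Transporting \eqref{acspaceeq} to the boundary and exponentiating turns the AC$_u(\kappa)$ inequality into the chain estimate
\[\rho_p(\xi_0,\xi_n)\ \le\ \Lambda^{c}\,n^{\alpha}\max_{1\le i\le n}\rho_p(\xi_{i-1},\xi_i)\]
valid for every chain in $\partial_{\infty}X$. The goal is to manufacture from $\rho_p$ a genuine metric $\rho$ on $\partial_{\infty}X$ with $\rho\asymp\rho_p$, hence visual with expansion factor exactly $\Lambda$. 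I would do this by a Frink-type metrization: $\rho$ is built as an infimum of chain-sums of $\rho_p$, organized by dyadic scale so that the chain estimate above is iterated rather than applied once, and the crucial point is that $\alpha<1$ makes the loss accumulated across scales summable, so that $\rho$ does not collapse below $\rho_p$; one then checks that $\rho$ induces the topology of $\partial_{\infty}X$, again from $\rho\asymp\rho_p$. Equivalently one can run the classical construction of visual metrics and use the estimate above, with $\alpha<1$, to push the admissible expansion factor up to $e^{\sqrt{-\kappa}}$.

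\textbf{The formula.} Let $\Lambda_0$ denote the supremum in the statement; it satisfies $\Lambda_0>1$ because $\partial_{\infty}X$ always carries some visual metric. If $1<\Lambda<\Lambda_0$ there is a visual metric with expansion factor $\Lambda$, so by the forward direction $X$ is AC$_u(-\log^{2}\Lambda)$ and $K_u(X)\le-\log^{2}\Lambda$; letting $\Lambda\uparrow\Lambda_0$ gives $K_u(X)\le-\log^{2}\Lambda_0$. Conversely, whenever $X$ is AC$_u(\kappa)$ the reverse direction realizes every $\Lambda<e^{\sqrt{-\kappa}}$ as an expansion factor, so $\Lambda_0\ge e^{\sqrt{-\kappa}}$, i.e.\ $\kappa\ge-\log^{2}\Lambda_0$; taking the infimum over all admissible $\kappa$ gives $K_u(X)\ge-\log^{2}\Lambda_0$. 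Combining the two inequalities, $K_u(X)=-\log^{2}\Lambda_0$. I expect the reverse direction — producing an honest metric bi-Lipschitz to $\Lambda^{-(\cdot,\cdot)_p}$ for every $\Lambda$ below the threshold $e^{\sqrt{-\kappa}}$, with all the metrization constants under control — to be the main obstacle; the forward implication and the bookkeeping for the formula are comparatively routine once the identity $1/\sqrt{-\kappa}=1/\log\Lambda$ is in hand.
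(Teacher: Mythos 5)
First, note that the paper does not actually prove this statement: Theorem \ref{acku} is quoted from Bonk--Foertsch \cite{BFAsymptotic} (their Theorem 1.5), so your proposal can only be measured against that argument. Your forward direction contains a step that is false as stated: replacing each $x_i$ by the endpoint $\xi_i$ of a geodesic ray from $p$ through $x_i$ does \emph{not} change Gromov products by a bounded additive amount. If $x_{i-1}$ and $x_i$ lie on a common ray, then $(\xi_{i-1},\xi_i)_p=\infty$ while $(x_{i-1},x_i)_p=\min\{d(p,x_{i-1}),d(p,x_i)\}<\infty$. The correct comparison is one-sided, $(\xi,\eta)_p\geq (x,y)_p-2\delta$ together with $(x,y)_p\geq \min\{d(p,x),d(p,y),(\xi,\eta)_p\}-2\delta$; the reduction can be repaired, but only by observing that $d(p,x_0)\geq (x_0,x_1)_p$ and $d(p,x_n)\geq(x_{n-1},x_n)_p$, so these terms never realize the minimum. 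Moreover, the rays you invoke need not exist in a general Gromov hyperbolic space: some visuality/rough-starlikeness hypothesis is genuinely required for the forward implication (attach to the $n$-th vertex of a ray a radius-$n$ ball of $\H^2$: the boundary is a single point, hence carries ``visual'' metrics with every $\Lambda$, yet $K_u=-1$). This hypothesis is implicit in the source and is satisfied by the tile graph, where every tile lies on a geodesic ray from the base vertex $\S^2$, but your proof uses it silently while the statement you were given does not grant it.

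The larger gap is the reverse direction, which is the actual content of the theorem and which you only gesture at. The chain (Frink-type) construction $\rho(\xi,\xi')=\inf\sum_i\Lambda^{-(\xi_{i-1},\xi_i)_p}$ combined with the two- or three-point inequality only yields visual metrics for $\Lambda$ close to $1$: the natural ``split the chain in half and induct'' argument does not close, because each step loses a multiplicative factor of order $\Lambda^{c}3^{\alpha}$, which need not be less than $2$. To reach \emph{every} $\Lambda<e^{\sqrt{-\kappa}}$ one must exploit the full $n$-term inequality \eqref{acspaceeq}, with a block/stopping-time decomposition of the chain in which the number of blocks is controlled by the size of the sum, so that the loss $n^{\alpha}$ with $\alpha=\log\Lambda/\sqrt{-\kappa}<1$ is absorbed; ``dyadic scales with summable loss'' is the right slogan, but you supply no mechanism, and you acknowledge yourself that this is the main obstacle. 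Finally, a small point in the formula: ``if $1<\Lambda<\Lambda_0$ there is a visual metric with expansion factor $\Lambda$'' needs either the snowflake observation $\rho\mapsto\rho^{\log\Lambda/\log\Lambda'}$ or, more simply, an application of the forward direction to some realizable $\Lambda'\in(\Lambda,\Lambda_0]$ together with the monotonicity of the AC$_u(\kappa)$ condition in $\kappa$.
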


\section{Tile Graphs}
\noindent
In this section, we construct graphs for expanding Thurston maps. We prove that these graphs are Gromov hyperbolic and their boundary at infinity can be identified with $\S^2$. This construction should be compared to the construction of graphs associated to finite branched coverings in Section 3.2 and 3.3 in \cite{HPCoarse}.

Let $f \: \S^2\ra \S^2$ be an expanding Thurston map,
and $\mathcal C\subset \S^2$ be a Jordan curve such that
post$(f)\subset \mathcal C$.
Recall that there is a natural sequence of cell decompositions ${\mathcal D}^n(f,\mathcal C)$
on $\S^2$ whose $1$-skeletons are the pull-backs of the Jordan curve $\mathcal C$ under $f^n$ (see Section 2).
Proposition 8.9 in \cite{BMExpanding} states that there exists a visual metric $d$ for $f$ with expansion factor $\Lambda$ for some $\Lambda>1$.

We define a graph by the cell decompositions of $(f, \mathcal C)$ as follows.
Let
\[V=V(f,\mathcal C)\]
be the set of all tiles in the cell decompositions $\D^n(f,\mathcal C)$ of $(f, \mathcal C)$ for $n\geq -1$, where  $\D^{-1}(f,\mathcal C)$ contains a single $(-1)$-tile $\S^2$.
Let $V$ be the set of vertices of the graph. Define the edge set $E$ as follows:
there is an edge between the two vertices $X^n,Y^m\in V$, which we indicate by the notation
$X^n\sim Y^m$ if for the underlying tiles we have
\[|n-m|\leq 1 \mbox{ and } X^n\cap Y^m\not= \emptyset.\]
We call the graph
\[\G(f,\mathcal C):=G(V,E)\]
the \emph{tile graph} of $(f, \mathcal C)$. We usually drop one or both
parameters in $\G(f,\mathcal C)$ if $f$ or $\mathcal C$ are clear from the context.
We call \[\ell\: V \ra \Z\]
the \emph{level function}, where
for an $n$-tile $X^n$, we have $\ell(X^n)=n$.

If $X\cap Y=\emptyset$, let
\begin{align*}
\bar m_{f,\mathcal C}(X,Y) :=\max\{m\in \N_{-1}\:&\mbox{
there exist non-disjoint $m$-tiles $X^m$ and} \\
&\mbox{$Y^m$, such that } X\cap X^m\not= \emptyset,
Y\cap Y^m\not= \emptyset\};
\end{align*}
if $X\cap Y\not=\emptyset$, let
\begin{align*}
\bar m_{f,\mathcal C}(X,Y) :=\infty. 
\end{align*}
Here we assume that the $\infty$-tile is the empty set.
For $X,Y\in \G$, define
\begin{eqnarray} \label{mG}
m(X,Y)=m_{f,\mathcal C}(X,Y)=\min\{\ell(X),\ell(Y), \bar m_{f,\mathcal C}(X,Y)\}.
\end{eqnarray}
The tile graph $\G$ is path connected since any tile can be connected to the $(-1)$-tile $\S^2$.
We give $\G$ the path metric $\eta$. Notice that $\G$ is a
geodesic space under this metric. The distance of $X\in V$ to the base point $\S^2$ is
\[\eta(X,\S^2)= \ell(X)+1.\]
For $X,Y\in \G$, we let
\begin{eqnarray} \label{gp}
(X,Y)&:=& (X,Y)_{X^{-1}} = (X,Y)_{\S^2} \nonumber \\
&=& 1/2[\eta(X,\S^2)+\eta(Y,\S^2)-\eta(X,Y)]\\
&=&1/2[\ell(X)+\ell(Y)-\eta(X,Y)]+1, \nonumber
\end{eqnarray} be the
\emph{Gromov product} of $X$ and $Y$ with respect to $X^{-1}=\S^2$.

In the following, we are going to prove that the tile graph $\G$ equipped with the path metric $\eta$ is a Gromov
hyperbolic space.

\excise{
\begin{eg}
We define a map $f$ as follows (see the picture below): we glue along the boundary of two unit squares $[0,1]^2$, and get a pillow-like space which is homeomorphic to $\widehat{\C}$; we color one of the squares black and the other white; we divide each of the squares into 4 smaller squares of half the side length, and color them with black and white in checkerboard fashion; we map one of the small black pillows to the bigger black pillows by Euclidean similarity, and extend the map to the whole pillow-like space by reflection. In fact, the map $f$ is an expanding Thurston map (see Example 4.13 in \cite{YinLattes}), and the postcritical set $\post (f)$ consists of the four common corner points of the two big squares.
\begin{center}
\mbox{ \scalebox{0.7}{\includegraphics{2by2.eps}}}
\end{center}
Let $\mathcal C$ be the common boundary of the two big squares, then $\mathcal C$ contains $\post (f)$.
On the $n$-th level, the set of $n$-tiles corresponding to
\end{eg}
}

\begin{lem} \label{mdiam}
There exists a constant $C>1$ such that for any tiles $X,Y\in \G$,
\[\frac1{C}\Lambda^{-m(X,Y)}\leq \diam (X\cup Y) \leq  C \Lambda^{-m(X,Y)}.\]
\end{lem}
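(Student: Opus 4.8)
The plan is to reduce Lemma~\ref{mdiam} to the already-known metric estimates for visual metrics, namely Lemma~\ref{charvisual}, by carefully unwinding the definition \eqref{mG} of $m(X,Y)$ and handling the various cases according to whether $X\cap Y$ is empty and which of the three quantities $\ell(X)$, $\ell(Y)$, $\bar m_{f,\mathcal C}(X,Y)$ realizes the minimum. Throughout I will freely replace constants by larger ones, so all estimates are ``up to a multiplicative constant'' and I will not track the constants explicitly.

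First I would dispose of the case $X\cap Y\neq\emptyset$. Then $\bar m_{f,\mathcal C}(X,Y)=\infty$, so $m(X,Y)=\min\{\ell(X),\ell(Y)\}=:k$. Since $\diam(X\cup Y)\leq\diam(X)+\diam(Y)$ (using that $X$ and $Y$ intersect), and each of $X,Y$ is an $n$-tile with $n\geq k$, Lemma~\ref{charvisual}(2) gives $\diam(X),\diam(Y)\leq C\Lambda^{-k}$, hence the upper bound. For the lower bound, the tile of smaller level, say $X$ with $\ell(X)=k$, is a $k$-tile, so $\diam(X\cup Y)\geq\diam(X)\geq(1/C)\Lambda^{-k}$ again by Lemma~\ref{charvisual}(2); that settles this case.

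Next, the main case $X\cap Y=\emptyset$. Write $\bar m=\bar m_{f,\mathcal C}(X,Y)$ and $k=\min\{\ell(X),\ell(Y)\}$, so $m(X,Y)=\min\{k,\bar m\}$. The upper bound is the easy direction: since $X\cap Y=\emptyset$ there exist non-disjoint $\bar m$-tiles $X^{\bar m}\supseteq$ (a point of) $X$ and $Y^{\bar m}$ meeting $X$, $Y$ respectively, so $X$ is covered by $X^{\bar m}$ together with $X$ itself — more precisely, $X\subseteq X^{\bar m}$ fails in general, but $X$ is a tile of level $\geq k\geq m$ meeting $X^{\bar m}$, so $\diam(X)\leq C\Lambda^{-m}$; likewise $\diam(Y)\leq C\Lambda^{-m}$ and $\diam(X^{\bar m}\cup Y^{\bar m})\leq \diam(X^{\bar m})+\diam(Y^{\bar m})\leq 2C\Lambda^{-\bar m}\leq 2C\Lambda^{-m}$, and chaining these through the common points yields $\diam(X\cup Y)\leq 4C\Lambda^{-m}$. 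For the lower bound I split further. If $m=\bar m\leq k$: by maximality of $\bar m$, any $(\bar m+1)$-tile meeting $X$ and any $(\bar m+1)$-tile meeting $Y$ are disjoint; choosing such tiles $X',Y'$ (possible since $\ell(X),\ell(Y)\geq k\geq \bar m+1$ would be needed — here one must be slightly careful, but $X$ and $Y$ are each contained in a union of $(\bar m+1)$-cells of the subdivision, and disjointness of $X,Y$ forces some such pair to be disjoint), Lemma~\ref{charvisual}(1) gives $d(X',Y')\geq(1/C)\Lambda^{-(\bar m+1)}$, hence $\diam(X\cup Y)\geq d(X,Y)\geq (1/C)\Lambda^{-(\bar m+1)}=(1/(C\Lambda))\Lambda^{-m}$. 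If instead $m=k<\bar m$: then $X$ or $Y$, say $X$, is exactly a $k$-tile, and $\diam(X\cup Y)\geq\diam(X)\geq(1/C)\Lambda^{-k}=(1/C)\Lambda^{-m}$ by Lemma~\ref{charvisual}(2).

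The step I expect to be the main obstacle is the lower bound in the subcase $m=\bar m<\infty$: producing, from the maximality of $\bar m$, two genuinely \emph{disjoint} $(\bar m+1)$-cells (or tiles) that witness a definite separation between $X$ and $Y$. One has to argue that a tile meeting $X$ of sufficiently high level can be chosen ``near'' any prescribed point of $X$, and similarly for $Y$, and that the relevant levels $\ell(X),\ell(Y)$ are large enough (if not, one is in the $m=k$ subcase instead). This bookkeeping — reconciling the definitions of $\bar m_{f,\mathcal C}$ and $m'_{f,\mathcal C}$ via Lemma~\ref{twom}, and invoking the fact that the minimum in the definition of $m_{f,\mathcal C}(x,y)$ is attained because $\mathrm{mesh}\to 0$ — is routine but is where the care is needed. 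Once the separation is in hand, Lemma~\ref{charvisual} closes both directions.
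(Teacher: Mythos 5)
Your proposal is correct and takes essentially the same route as the paper: both directions reduce to Lemma~\ref{charvisual}, the upper bound by chaining diameters through a non-disjoint pair of tiles meeting $X$ and $Y$, and the lower bound from the observation that, by maximality in the definition of $\bar m_{f,\mathcal C}(X,Y)$, every pair of $(\bar m+1)$-tiles meeting $X$ and $Y$ respectively is disjoint, so Lemma~\ref{charvisual}(1) gives the separation, while your case split merely unpacks the paper's single $\max$/$\min$ computation. The step you flag as the main obstacle is not actually one: $(\bar m+1)$-tiles meeting $X$ and $Y$ always exist since tiles of every level cover $\S^2$ (no condition on $\ell(X),\ell(Y)$ is needed), their disjointness is immediate from maximality of $\bar m$ (finiteness of $\bar m$ for $X\cap Y=\emptyset$ following from expansion), and no appeal to Lemma~\ref{twom} is required --- which is exactly how the paper argues.
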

Here and in the following, the diameter function $\diam(\cdot)$ is with respect to the visual metric $d$ on $\S^2$.

\begin{proof}
Let $m=m(X,Y)$, and let $X^m, Y^m$ be non-disjoint $m$-tiles such that
\[X\cap X^m\not=\emptyset \mbox{ and } Y\cap Y^m\not=\emptyset.\]
We have that
\begin{eqnarray*}
\diam (X\cup Y) &\leq &\diam (X)+\diam (X^m)+\diam (Y^m)+\diam (Y)\\
&\leq& 4 C' \Lambda^{-m},
\end{eqnarray*}
where $C'>1$ is the same as the constant in Lemma \ref{charvisual}, which only depends on $f$ and $\mathcal C$.
Let $\bar m= \bar m_{f,\mathcal C}(X,Y)$, and let $X^{\bar m+1}, Y^{\bar m+1}$ be disjoint $(\bar m+1)$-tiles such that
\[X\cap X^{\bar m+1}\not=\emptyset,\quad Y\cap Y^{\bar m+1}\not=\emptyset.\] Then
\begin{eqnarray*}
\diam (X\cup Y) &\geq& \max\{ \diam (X), \diam (Y), d(X^{\bar m+1}, Y^{\bar m+1}) \} \\
&\geq & \frac1{C'}\max\{\Lambda^{-\ell(X)}, \Lambda^{-\ell(Y)}, \Lambda^{-\bar m}\} \\
&\geq &\frac1{C'}\Lambda^{-\min\{\ell(X),\ell(Y),\bar m\}}\\
&= &\frac1{C'}\Lambda^{-m},
\end{eqnarray*}
where $C'>1$ is the same $C$ as in Lemma \ref{charvisual}, which only depends on $f$ and $\mathcal C$. Let $C=4C'$, and the lemma follows.
\end{proof}

\begin{lem} \label{pdiam}
There exists a constant $k\geq 1$ such that for any tiles $X,Y\in \G$,
\[\diam (X\cup Y) \leq k\Lambda^{-(X,Y)}.\]
\end{lem}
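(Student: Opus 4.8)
The plan is to reduce the claimed estimate $\diam(X\cup Y)\leq k\Lambda^{-(X,Y)}$ to the already-established Lemma~\ref{mdiam}, by showing that the combinatorial quantity $m(X,Y)$ appearing there differs from the Gromov product $(X,Y)$ (computed with respect to the base point $\S^2$) by at most an additive constant. Granting a comparison of the form $(X,Y)\leq m(X,Y)+C_1$ for some constant $C_1$ depending only on $f$ and $\mathcal C$, we immediately get
\[
\diam(X\cup Y)\leq C\Lambda^{-m(X,Y)}\leq C\Lambda^{C_1}\Lambda^{-(X,Y)},
\]
so the lemma holds with $k=\max\{1,C\Lambda^{C_1}\}$. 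Thus the real content is the inequality $(X,Y)\leq m(X,Y)+C_1$, i.e. that the path metric $\eta$ on $\G$ satisfies $\eta(X,Y)\geq \ell(X)+\ell(Y)-2m(X,Y)+2-2C_1$, equivalently that a geodesic in $\G$ from $X$ to $Y$ cannot descend ``too far below'' level $m(X,Y)$.

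The key step is therefore a lower bound on $\eta(X,Y)$ in terms of $m(X,Y)$. I would argue as follows. First observe the easy direction is not needed here; we only want the lower bound on $\eta$. Take any edge path $X=Z_0\sim Z_1\sim\cdots\sim Z_N=Y$ in $\G$ realizing $\eta(X,Y)=N$. Along such a path the level function $\ell$ changes by at most $1$ at each step (by definition of $\sim$), and consecutive tiles intersect (again by definition of $\sim$). Let $j$ be an index minimizing $\ell(Z_j)$ along the path, and set $r=\ell(Z_j)$. Since $Z_0=X$ and any two consecutive $Z_i$ are non-disjoint tiles whose levels differ by at most one, the sub-chain $Z_0,\dots,Z_j$ exhibits, after possibly passing to tiles of a common level via Lemma~\ref{twom} (relating $m$ and $m'$), a connected chain of $r$-level tiles (or close to it) joining something meeting $X$ to something meeting $Z_j$; similarly for $Z_j,\dots,Z_N$ joining $Z_j$ to $Y$. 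In particular $X$ and $Y$ are connected through non-disjoint tiles of level roughly $r$, which forces $\bar m_{f,\mathcal C}(X,Y)\geq r-O(1)$, hence $m(X,Y)\geq \min\{\ell(X),\ell(Y),r\}-O(1)$. On the other hand the path must descend from level $\ell(X)$ to level $r$ and climb from level $r$ to level $\ell(Y)$, so $N\geq (\ell(X)-r)+(\ell(Y)-r)=\ell(X)+\ell(Y)-2r$. Combining these two facts with the formula $(X,Y)=\tfrac12[\ell(X)+\ell(Y)-\eta(X,Y)]+1$ yields $(X,Y)\leq r+1\leq m(X,Y)+C_1$, as desired. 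One has to handle separately the degenerate cases: if $X\cap Y\neq\emptyset$ then $m(X,Y)=\infty$ and the inequality is vacuous; and if $\ell(X)$ or $\ell(Y)$ is the minimizing level, i.e. $r=\min\{\ell(X),\ell(Y)\}$, then $m(X,Y)$ is already $\geq r$ by its definition and nothing is lost.

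The main obstacle is making the chain-passing argument rigorous: a geodesic in $\G$ may oscillate in level and may involve tiles whose levels differ by $1$, so I cannot directly extract a monochromatic (single-level) chain of tiles joining $X$ to $Y$. The fix is to use Lemma~\ref{twom}: replacing each tile $Z_i$ of level $\ell(Z_i)\in\{r,r+1\}$ that meets $Z_{i+1}$ by an $r$-tile containing the appropriate point costs only a bounded additive loss, because non-disjoint tiles of levels $r$ and $r+1$ can be replaced by non-disjoint tiles of level $r$ (an $(r+1)$-tile is contained in the union of the $r$-tiles meeting it, up to the flower/vertex subtleties, which again contribute only a universal constant). Carrying this bookkeeping through, with all constants depending only on $f$ and $\mathcal C$ and absorbed into $C_1$, gives the claimed bound; quasi-isometry invariance of the construction then shows $k$ can be taken uniform in $\mathcal C$ if one wishes, though that is not needed for the statement.
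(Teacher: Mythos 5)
Your overall strategy is to deduce the lemma from Lemma~\ref{mdiam} via the comparison $(X,Y)\leq m(X,Y)+C_1$. Note that in the paper this comparison is Proposition~\ref{mp}, whose proof \emph{uses} Lemma~\ref{pdiam}; so your reduction is only legitimate if you give an independent proof of $(X,Y)\leq m(X,Y)+C_1$, and that is exactly where your argument breaks down. The step ``$X$ and $Y$ are connected through non-disjoint tiles of level roughly $r$, which forces $\bar m_{f,\mathcal C}(X,Y)\geq r-O(1)$'' is not valid: by definition, $\bar m_{f,\mathcal C}(X,Y)\geq r$ requires a \emph{single pair} of intersecting $r$-tiles, one meeting $X$ and one meeting $Y$ (a chain of length two), whereas your construction only produces a possibly long chain of $r$-tiles from $X$ to $Y$. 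The existence of such a chain carries no information: since the $r$-tiles cover $\S^2$ and form a cell decomposition, \emph{any} two tiles can be joined by a chain of non-disjoint $r$-tiles for \emph{every} $r$, so your inference would give $\bar m_{f,\mathcal C}(X,Y)=\infty$ for all pairs, which is absurd. Concretely, take $X,Y$ of high level $n$ lying at distance comparable to $\diam(\S^2)$; a chain of $n$-tiles joining them exists, yet $\bar m_{f,\mathcal C}(X,Y)=O(1)$.

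The missing ingredient is the geodesic property of the path, which your sketch records ($N=\eta(X,Y)$) but never actually uses in the crucial step; the bound $N\geq(\ell(X)-r)+(\ell(Y)-r)$ and the replacement of mixed-level neighbors by $r$-tiles via Lemma~\ref{twom} hold for arbitrary paths, for which the desired conclusion $r\leq m(X,Y)+O(1)$ is false. To exploit geodesicity quantitatively one is led to control how much ground a path of given length can cover, i.e.\ to the estimate $\diam(X\cup Y)\leq\sum_i\diam(X_i)\leq C\sum_i\Lambda^{-\ell(X_i)}$ together with the fact that $\ell$ changes by at most $1$ per step, so the sum is dominated by a geometric series of size about $\Lambda^{-\frac12(\ell(X)+\ell(Y)-\eta(X,Y))}=\Lambda^{-(X,Y)+1}$. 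But that is precisely the paper's direct proof of Lemma~\ref{pdiam} (using only Lemma~\ref{charvisual}), with no detour through $m(X,Y)$; so either you fill the gap by essentially reproducing that argument, or the proposal as it stands does not prove the lemma.
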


\begin{proof}
Let $\eta=\eta(X,Y)$. Pick any path $X_0=X, X_1,\ldots, X_{\eta}=Y$. Then
\begin{eqnarray*}
\diam (X\cup Y)& \leq & \sum_{i=0}^{\eta}\diam(X_i)\\
&\leq & C \sum_{i=0}^{\eta}\Lambda^{-\ell(X_i)} \\
&\leq & C \min_{0\leq l\leq \eta}\left\{\sum_{i=0}^{l}\Lambda^{-\ell(X)+i}+ \sum_{i=l+1}^{\eta}\Lambda^{-\ell(Y)+(\eta-i)}\right\}\\
&\leq & \frac{C\Lambda}{\Lambda-1} \min_{0\leq l\leq \eta}\left\{\Lambda^{-\ell(X)+l}+ \Lambda^{-\ell(Y)+(\eta-l)}\right\}.
\end{eqnarray*}
Notice that on the right hand-side the minimum is obtained when the two exponents  of $\Lambda$ are the same:
\[ -\ell(X)+l= -\ell(Y)+(\eta-l),\]
so we let
\[l=\left[\frac12(\ell(X)-\ell(Y)+\eta)\right]\]
be the integer part of $\frac12(\ell(X)-\ell(Y)+\eta)$. Hence, we have
\begin{eqnarray*}
\diam (X\cup Y) &\leq & \frac{2C\Lambda}{\Lambda-1} \Lambda^{-[1/2(\ell(X)+\ell(Y)-\eta)]} \\
&\leq &k\Lambda^{-(X,Y)},
\end{eqnarray*}
where $C>1$ is the same $C$ as in Lemma  \ref{charvisual}, and
\[k=\frac{2C\Lambda^3}{\Lambda-1}\] also only depends on $f$.
\end{proof}

\begin{pro} \label{mp}
There exists a constant $C'>0$, such that for any tiles $X,Y\in \G$,
\begin{eqnarray*}
m(X,Y)-1\leq (X,Y)\leq m(X,Y)+C'.
\end{eqnarray*}
\end{pro}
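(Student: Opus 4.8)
The plan is to sandwich the Gromov product $(X,Y)$ between $m(X,Y)$ and $m(X,Y)+C'$ by comparing both quantities to $\diam(X\cup Y)$ measured in the visual metric $d$. We already have the two halves of this comparison available: Lemma \ref{mdiam} gives $\frac1{C}\Lambda^{-m(X,Y)}\le \diam(X\cup Y)\le C\Lambda^{-m(X,Y)}$, and Lemma \ref{pdiam} gives $\diam(X\cup Y)\le k\Lambda^{-(X,Y)}$. So the only genuinely new ingredient needed is a \emph{lower} bound for $\diam(X\cup Y)$ in terms of $\Lambda^{-(X,Y)}$, i.e. an inequality of the form $\diam(X\cup Y)\ge k'\Lambda^{-(X,Y)}$ for some constant $k'>0$ depending only on $f$.

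First I would establish the upper bound $(X,Y)\le m(X,Y)+C'$. Combining the lower bound of Lemma \ref{mdiam} with the upper bound of Lemma \ref{pdiam}, we get $\frac1{C}\Lambda^{-m(X,Y)}\le \diam(X\cup Y)\le k\Lambda^{-(X,Y)}$, hence $\Lambda^{-m(X,Y)}\le Ck\,\Lambda^{-(X,Y)}$, and taking $\log_\Lambda$ yields $(X,Y)\le m(X,Y)+\log_\Lambda(Ck)$; set $C'=\log_\Lambda(Ck)$. Next, for the lower bound $m(X,Y)-1\le (X,Y)$, I would argue combinatorially rather than metrically, since it should be sharp up to the stated additive $1$. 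The idea: let $m=m(X,Y)$. By definition of $m$ as $\min\{\ell(X),\ell(Y),\bar m(X,Y)\}$, the tiles $X$ and $Y$ can each be joined to a common point through short chains of tiles of controlled levels — concretely, $X$ touches some $m$-tile $X^m$ and $Y$ touches some $m$-tile $Y^m$ with $X^m\cap Y^m\ne\emptyset$ (this uses $m\le\bar m(X,Y)$, so such non-disjoint $m$-tiles exist, and $m\le\ell(X),\ell(Y)$ so the edges $X\sim X^m$, $Y\sim Y^m$ in the graph make sense because the level difference between $\ell(X)$ and $m$ need not be $\le1$ — here I need to be careful and instead walk down level by level). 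The clean way is: from $X$ at level $\ell(X)$ walk down to level $m$ through the chain $X, X^{(\ell(X)-1)}, X^{(\ell(X)-2)},\ldots$ where each $X^{(j)}$ is a $j$-tile containing $X$ (nested tiles), reaching an $m$-tile $\widetilde X^m$; similarly from $Y$ reach an $m$-tile $\widetilde Y^m$; then $\widetilde X^m$ and $\widetilde Y^m$ meet $X^m$ and $Y^m$ respectively, and since non-disjoint $m$-tiles are connected by a bounded-length path in $\G$... actually the simplest estimate: $\eta(X,Y)\le (\ell(X)-m)+(\ell(Y)-m)+(\text{bounded connector at level }m)$. To keep the additive constant exactly $1$, I would instead directly estimate $\eta(X,Y)$ from above by a path that descends from $X$ to $X^m$, crosses from $X^m$ to $Y^m$ in one edge, and ascends from $Y^m$ to $Y$: this has length at most $(\ell(X)-m)+1+(\ell(Y)-m)=\ell(X)+\ell(Y)-2m+1$, which plugged into $(X,Y)=\frac12[\ell(X)+\ell(Y)-\eta(X,Y)]+1$ gives $(X,Y)\ge \frac12[\ell(X)+\ell(Y)-(\ell(X)+\ell(Y)-2m+1)]+1=m+\frac12\ge m-1$ — in fact one gets the stronger $(X,Y)\ge m-\tfrac12$. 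I should double-check whether the descending chain from $X$ to an $m$-tile $\widetilde X^m$ followed by a single edge to $Y^m$ is valid: $\widetilde X^m$ is a descendant-ancestor of $X$ inside nested tiles so adjacent tiles in that chain share a nonempty intersection and differ by one level, hence are graph-adjacent; and $\widetilde X^m$ meets $X^m$? Not necessarily — so I would instead take $X^m$ itself to be the $m$-tile in the nested chain containing $X\cap X^m$, i.e. pick $X^m$ among the $m$-tiles witnessing $\bar m(X,Y)$ to also be a tile that the descending chain passes through; this requires a small argument that among the $m$-tiles meeting $X$ one can choose one meeting $Y^m$, which is exactly the content of $m\le\bar m(X,Y)$.

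The main obstacle I anticipate is the bookkeeping in the lower-bound direction: carefully constructing a path in $\G$ from $X$ to $Y$ of length at most $\ell(X)+\ell(Y)-2m+1$ and verifying each step is a legitimate edge (level difference $\le1$, nonempty intersection of underlying tiles). In particular one must handle the edge cases $X\cap Y\ne\emptyset$ (where $m=\min\{\ell(X),\ell(Y)\}$, so the descending chain from the deeper tile to the shallower one's level suffices, with no "crossing" step needed) and $\ell(X)$ or $\ell(Y)$ equal to $-1$ (the base tile $\S^2$, which meets everything). Once that path is exhibited, the arithmetic with the explicit formula \eqref{gp} for $(X,Y)$ is immediate, and the upper bound is a one-line consequence of Lemmas \ref{mdiam} and \ref{pdiam} as above. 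I do not expect to need the full strength of Gromov hyperbolicity here — this proposition is a purely quantitative comparison and should precede (and feed into) the hyperbolicity proof.
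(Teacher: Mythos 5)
Your plan is essentially the paper's own proof: the upper bound is obtained exactly as you say by combining Lemmas \ref{mdiam} and \ref{pdiam}, and the lower bound by an explicit path $X\to X^m\to Y^m\to Y$ through non-disjoint $m$-tiles fed into formula \eqref{gp} (the paper is content with path length $\le \ell(X)+\ell(Y)-2m+3$, i.e.\ $\eta(X,X^m)\le\ell(X)-m+1$ plus one crossing edge, which already gives $(X,Y)\ge m-1$, so you need not fight for the sharper additive $1$). One caution on your descending chain: since the decompositions $\D^{j}$ need not be nested, a $j$-tile containing all of $X$ may not exist, so build the chain as you later suggest, from tiles of consecutive levels all containing a fixed point of $X\cap X^m$ (choosing the level-$m$ tile to be $X^m$ itself), which is exactly the construction underlying the paper's estimate.
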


\begin{proof}
By Lemma \ref{mdiam} and Lemma \ref{pdiam}, we have that
\[ \frac1{C}\Lambda^{-m(X,Y)}\leq \diam (X\cup Y) \leq k\Lambda^{-(X,Y)}\] for some constants $C,k>1$ which only depend on $f$. Hence, there exists a constant $C'>0$, such that for any tiles $X,Y\in \G$,
\[(X,Y)\leq m(X,Y)+C'.\]

For the other inequality, let $m=m(X,Y)$, and let $X^m, Y^m$ be non-disjoint $m$-tiles such that
\[X\cap X^m\not=\emptyset,\quad Y\cap Y^m\not=\emptyset.\]
So \[\eta(X,X^m)\leq \ell(X)-m+1 ,\]and
\[\eta(Y,Y^m)\leq \ell(Y)-m+1 .\]
By the triangle inequality, we have that
\begin{eqnarray*}
\eta(X,Y)&\leq& \eta(X,X^m)+\eta(X^m,Y)\\
&\leq & \eta(X,X^m)+\eta(Y^m,Y)+1\\
&\leq & (\ell(X)-m)+(\ell(Y)-m)+3.
\end{eqnarray*}
Hence, we obtain that
\begin{eqnarray*}
(X,Y)&= &(X,Y)_{X^{-1}} =1/2[\ell(X)+\ell(Y)-\eta(X,Y)]+1\\
&\geq & 1/2[\ell(X)+\ell(Y)-(\ell(X)-m)-(\ell(Y)-m)-3]+1\\
&\geq &m(X,Y)-1.
\end{eqnarray*}
\end{proof}

\excise{
----------------------------------
\begin{cor}
There exists a constant $C'>0$, such that for any tiles $X,Y\in \G$, we have
\[  (X,Y)\leq m(X,Y)+C'.\]
\end{cor}
\begin{proof}
By the Lemma \ref{mdiam} and Lemma \ref{pdiam}, we have
\[ \frac1{k}\Lambda^{-m(X,Y)}\leq \diam (X\cup Y) \leq k'\Lambda^{-(X,Y)},\] for some constant $k,k'>1$ which only depends on $f$. The corollary follows easily.
\end{proof}

\begin{lem}
\[ m(X,Y)\leq (X,Y).\]
\end{lem}

\begin{proof}
Let $m=m(X,Y)$, and let $X^m, Y^m$ be non-disjoint $m$-tiles such that
\[X\cap X^m\not=\emptyset,\quad Y\cap Y^m\not=\emptyset.\]
By triangle inequality, we have
\begin{eqnarray*}
\eta(X,Y)&\leq& \eta(X,X^m)+\eta(X^m,Y)\\
&\leq & \eta(X,X^m)+\eta(Y^m,Y)+1\\
&\leq & (\ell(X)-m)+(\ell(Y)-m)+1.
\end{eqnarray*}
Hence,
\begin{eqnarray*}
(X,Y)&= &(X,Y)_{X^{-1}} =1/2[\ell(X)+\ell(Y)-\eta(X,Y)]+1\\
&\geq & 1/2[\ell(X)+\ell(Y)-(\ell(X)-m)-(\ell(Y)-m)-1]+1\\
&\geq &m(X,Y).
\end{eqnarray*}
\end{proof}
-------------------------------}

\begin{lem} \label{triangleineq}
There exists a number $c\geq 0$ such that for any tiles $X,Y,Z\in \G$,
\[m(X,Y)\geq \min\{m(X,Z),m(Y,Z)\} -c .\]
\end{lem}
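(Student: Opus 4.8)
\emph{Proof proposal.} The plan is to deduce this ``ultrametric-type'' inequality for the combinatorial quantity $m$ from the metric comparison in Lemma~\ref{mdiam}, using nothing more than the ordinary triangle inequality for the visual metric $d$ on $\S^2$. A direct combinatorial argument about which $n$-tiles meet which would be cumbersome; once we know that $\Lambda^{-m(X,Y)}$ is comparable to $\diam(X\cup Y)$, the statement reduces to a short estimate. All of the substantive content has, in effect, already been isolated in Lemma~\ref{mdiam}, so I do not expect a genuine obstacle here.

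First I would record the elementary geometric estimate. Every tile is a nonempty subset of $\S^2$, so we may choose a point $z\in Z$; then for any $a\in X$ and $b\in Y$,
\[ d(a,b)\leq d(a,z)+d(z,b)\leq \diam(X\cup Z)+\diam(Z\cup Y), \]
and hence
\[ \diam(X\cup Y)\leq \diam(X\cup Z)+\diam(Y\cup Z)\leq 2\max\{\diam(X\cup Z),\diam(Y\cup Z)\}. \]
Here I would note that this holds uniformly, including the degenerate case $Z=\S^2$ (the $(-1)$-tile) and cases in which some of the pairs intersect.

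Next I would apply Lemma~\ref{mdiam} to each of the three pairs $X\cup Y$, $X\cup Z$, $Y\cup Z$. Writing $C>1$ for the constant from that lemma (which depends only on $f$ and the chosen visual metric), and setting $\mu=\min\{m(X,Z),m(Y,Z)\}$, we get
\[ \frac1C\,\Lambda^{-m(X,Y)}\leq \diam(X\cup Y)\leq 2C\max\{\Lambda^{-m(X,Z)},\Lambda^{-m(Y,Z)}\}=2C\,\Lambda^{-\mu}. \]
Multiplying through by $C$ and taking logarithms to base $\Lambda$ (legitimate since $\Lambda>1$) yields $\mu-m(X,Y)\leq \log(2C^2)/\log\Lambda$, i.e.
\[ m(X,Y)\geq \min\{m(X,Z),m(Y,Z)\}-c, \qquad c:=\frac{\log(2C^2)}{\log\Lambda}\geq 0, \]
which is precisely the asserted inequality, with $c$ a constant depending only on $f$. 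The only points requiring (minor) care are that tiles are nonempty so the choice of $z\in Z$ is legitimate, and that Lemma~\ref{mdiam} is valid for all tiles; neither of these causes any difficulty.
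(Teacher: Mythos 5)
Your proof is correct and follows essentially the same route as the paper: establish $\diam(X\cup Y)\leq 2\max\{\diam(X\cup Z),\diam(Z\cup Y)\}$ via the triangle inequality for the visual metric, then transfer this to $m$ using the two-sided comparison of Lemma~\ref{mdiam} and take logarithms. The only cosmetic difference is that the paper spells out the diameter estimate by splitting the supremum over all pairs of points (including pairs within $X$ or within $Y$), which your argument handles implicitly; this changes nothing of substance.
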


\begin{proof}
For any $X,Y,Z\in \G$,
\begin{eqnarray*}
\diam(X\cup Y)&= &\max\{d(x,y),d(x,x'),d(y,y')\: x,x'\in X, y,y'\in Y\}\\
&\leq & \max\{d(x,z)+d(z,y),d(x,x'),d(y,y')\: \\
&& \hspace{1.5cm} x,x'\in X, y,y'\in Y, z\in Z\}\\
&\leq & \max\{d(x,z),d(x,x')\: x,x'\in X,z\in Z\}\\
&& +\max\{d(z,y),d(y,y')\: y,y'\in Y,z\in Z\}\\
&\leq & \diam(X\cup Z)+\diam(Z \cup Y),
\end{eqnarray*}
and so
\begin{eqnarray} \label{diamtri}
\diam(X\cup Y)&\leq & 2\max\{ \diam(X\cup Z),\diam(Z \cup Y)\}.
\end{eqnarray}

By Lemma \ref{mdiam}, there exists a constant $k>1$, such that for any $X,Y\in \G$,
\begin{eqnarray*}
\frac1{k}\Lambda^{-m(X,Y)}\leq \diam (X\cup Y) \leq k \Lambda^{-m(X,Y)}.
\end{eqnarray*}
Hence, by the inequalities above and inequality \eqref{diamtri}, we have that
\begin{eqnarray*}
m(X,Y)&\geq & -\log_{\Lambda}\big(k\diam(X\cup Y)\big) \\
&\geq & -\log_{\Lambda}\Big(2k\max\big\{ \diam(X\cup Z),\diam(Z \cup Y) \big\}\Big)\\
&\geq & \min\Big\{ -\log_{\Lambda}\big(2k\diam(X\cup Z)\big),-\log_{\Lambda}\big(2k\diam(Z \cup Y)\big) \Big\}\\
&\geq & \min\{m(X,Z),m(Y,Z)\} -c
\end{eqnarray*}
for some $c\geq 0$ that only depends on $f$.
\end{proof}

\begin{thm} \label{gh}
Let $f\: \S^2 \ra S^2$ be an expanding Thurston map 
and let $\mathcal C \subset \S^2$ be a Jordan curve containing $\post(f)$. Then the tile graph $\G(f,\mathcal C)$ equipped with the path metric $\eta$ is a Gromov hyperbolic space.
\end{thm}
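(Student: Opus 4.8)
The plan is to verify the Gromov three-point inequality \eqref{trianglein} directly at the base point $p = X^{-1} = \S^2$, using the combinatorial quantity $m(X,Y)$ from \eqref{mG} as a uniform proxy for the Gromov product $(X,Y)$ defined in \eqref{gp}. The two ingredients needed are already established: Proposition~\ref{mp} says that $(X,Y)$ and $m(X,Y)$ agree up to a universal additive constant, namely $m(X,Y)-1 \le (X,Y) \le m(X,Y)+C'$; and Lemma~\ref{triangleineq} says that $m$ satisfies an approximate ultrametric-type inequality $m(X,Y) \ge \min\{m(X,Z),m(Z,Y)\} - c$. (Both $C'$ and $c$ depend only on $f$, since they ultimately come from the visual-metric constant in Lemma~\ref{charvisual} via Lemmas~\ref{mdiam} and \ref{pdiam}.)

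The proof then is a short chain of estimates. Given $X,Y,Z\in\G$, first apply the lower bound of Proposition~\ref{mp} to get $(X,Y)\ge m(X,Y)-1$; then Lemma~\ref{triangleineq} to get $m(X,Y)\ge \min\{m(X,Z),m(Z,Y)\}-c$; then the other half of Proposition~\ref{mp} in the form $m(X,Z)\ge (X,Z)-C'$ and $m(Z,Y)\ge (Z,Y)-C'$. Combining,
\[
(X,Y)\ \ge\ \min\{(X,Z),(Z,Y)\} - (1+c+C').
\]
Thus \eqref{trianglein} holds at $p=\S^2$ with $\delta = 1+c+C'$, and since $\G$ with the path metric $\eta$ is a geodesic space (every tile is joined to the $(-1)$-tile $\S^2$, and the path metric is geodesic on the connected graph), this shows that $\G(f,\mathcal C)$ is Gromov hyperbolic.

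I do not expect a genuine obstacle at this stage: the real content has already been carried out in Lemmas~\ref{mdiam}, \ref{pdiam}, \ref{triangleineq} and Proposition~\ref{mp}, where the characterization of visual metrics (Lemma~\ref{charvisual}) converts the combinatorics of the cell decompositions $\D^n(f,\mathcal C)$ into the required metric comparisons. The only small point to keep in mind is that the definition of $\delta$-hyperbolicity used in this paper requires \eqref{trianglein} only at a single base point, so it is enough to check it at $p=\S^2$; the remark following \eqref{trianglein} then upgrades it to arbitrary base points at the cost of replacing $\delta$ by $2\delta$.
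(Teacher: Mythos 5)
Your proposal is correct and is essentially the paper's own argument: the paper likewise combines Proposition~\ref{mp} (the additive comparability of $(X,Y)$ and $m(X,Y)$) with Lemma~\ref{triangleineq} to obtain $(X,Y)\geq \min\{(X,Z),(Z,Y)\}-c'$ at the base point $\S^2$, you have merely made the constant $\delta=1+c+C'$ explicit.
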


\begin{proof}
For any tiles $X,Y\in \G$, by Proposition \ref{mp}, the Gromov product $(X,Y)$ defined in equation \eqref{gp} is equal to $m(X,Y)$ up to a constant which only depends on $f$. So by Lemma \ref{triangleineq}, there exists a constant $c'>0$, such that for any tiles $X,Y,Z\in \G$,
\[(X,Y)\geq \min\{(X,Z), (Y,Z)\} -c' .\] Therefore, the graph $G(f,\mathcal C)$ equipped with the path metric $\eta$ is a Gromov hyperbolic space.
\end{proof}

\begin{re}
In the proofs of Proposition \ref{mp} and Lemma \ref{triangleineq}, we used visual metrics as a bridge to connect $m(\cdot,\cdot)$ and the Gromov product $(\cdot,\cdot)$. This idea is contained in \cite{BPCohomologie}. Theorem \ref{gh}, Proposition \ref{mp} and Lemma \ref{triangleineq} can also be proved combinatorially without using visual metrics.
\end{re}

\begin{pro} \label{roughgg}
For any Jordan curves $\mathcal C$ and $\mathcal C'$ containing $\post (f)$, the tile graphs $\G=\G(f,\mathcal C)$ and $\G'=\G(f,\mathcal C')$ equipped with path metric respectively are rough-isometric.
\end{pro}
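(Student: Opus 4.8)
The plan is to exhibit an explicit rough-isometry $\Phi\colon \G(f,\mathcal C)\to\G(f,\mathcal C')$ by sending an $n$-tile $X$ for $(f,\mathcal C)$ to \emph{some} $n$-tile $\Phi(X)$ for $(f,\mathcal C')$ that meets $X$ (such a tile exists by Lemma \ref{tilenumber}(4), since $n$-tiles for $(f,\mathcal C')$ cover $\S^2$), with $\Phi$ fixing the common base point $\S^2=X^{-1}$. The key point will be that $\Phi$ distorts distances only by a bounded additive amount — i.e.\ there is a constant $k\geq 0$ with
\[
\bigl|\eta'(\Phi(X),\Phi(Y)) - \eta(X,Y)\bigr|\leq k
\]
for all tiles $X,Y$, and that $\Phi$ is coarsely surjective. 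Coarse surjectivity is immediate from the same covering remark applied in the other direction: every $n$-tile for $(f,\mathcal C')$ meets some $n$-tile for $(f,\mathcal C)$, whose image under $\Phi$ is then within bounded $\eta'$-distance of it (using that $\Phi$ preserves levels, so two $n$-tiles that both meet a fixed $n$-tile are joined by a path of length $\leq 3$, say, via the level-$(n-1)$ tiles containing them, or more simply by the triangle-graph structure). So the whole proposition reduces to the additive distortion bound.

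For the distortion bound I would \emph{not} argue combinatorially with the graphs directly; instead, I would use visual metrics on $\S^2$ as the bridge, exactly as in the remark following Theorem \ref{gh}. Fix a single visual metric $d$ for $f$ on $\S^2$ with expansion factor $\Lambda$; this is a visual metric simultaneously for the cell decompositions coming from $\mathcal C$ and from $\mathcal C'$, since Lemma \ref{charvisual} characterises visual metrics by the estimates $d(\sigma,\tau)\gtrsim\Lambda^{-n}$ for disjoint $n$-cells and $\diam\tau\asymp\Lambda^{-n}$ for $n$-tiles, and these hold for \emph{any} Jordan curve through $\post(f)$ once they hold for one (this is precisely what makes $\Lambda_0(f)$ independent of $\mathcal C$; cf.\ Proposition \ref{expansionfactor} and inequality (17.1) in \cite{BMExpanding}). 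Now combine three ingredients: (i) by Proposition \ref{mp} applied in $\G=\G(f,\mathcal C)$, the Gromov product $(X,Y)_{\G}$ agrees with $m_{f,\mathcal C}(X,Y)$ up to an additive constant; (ii) by Lemma \ref{mdiam} in each graph, $m_{f,\mathcal C}(X,Y)$ and $m_{f,\mathcal C'}(\Phi(X),\Phi(Y))$ are both equal, up to additive constants depending only on $f$, to $-\log_\Lambda\diam(X\cup Y)$ and $-\log_\Lambda\diam(\Phi(X)\cup\Phi(Y))$ respectively; and (iii) since $\Phi(X)$ meets $X$ and $\diam(\Phi(X))\leq C\Lambda^{-n}\asymp\diam(X)$ when $\ell(X)=n$ (Lemma \ref{charvisual}(2)), we get $\diam(X\cup Y)\asymp\diam(\Phi(X)\cup\Phi(Y))$ with multiplicative constant depending only on $f$. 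Chaining (i)--(iii) gives $(X,Y)_{\G} = (\Phi(X),\Phi(Y))_{\G'} + O(1)$. Since the Gromov product is expressed through the metric and the fixed base point via $\eta(X,Y)=\ell(X)+\ell(Y)-2(X,Y)_{\G}+2$ (equation \eqref{gp}) and likewise for $\eta'$, and since $\Phi$ preserves levels, the additive control on Gromov products transfers verbatim to additive control on $\eta$ versus $\eta'$, which is the desired rough-isometry estimate.

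The main obstacle is making ingredient (iii) uniform: one must be sure that the comparison $\diam(X\cup Y)\asymp\diam(\Phi(X)\cup\Phi(Y))$ has a constant independent of $X,Y$ and of their levels. This is handled by the triangle-type inequality for diameters already established inside the proof of Lemma \ref{triangleineq} — namely $\diam(A\cup B)\leq 2\max\{\diam(A\cup C),\diam(C\cup B)\}$ together with $\diam(A\cup B)\leq \diam A+\diam(\text{link})+\diam B$ — applied to interpolate between $X\cup Y$ and $\Phi(X)\cup\Phi(Y)$ through the overlaps $X\cap\Phi(X)\neq\emptyset$, $Y\cap\Phi(Y)\neq\emptyset$, using only that $\diam X,\diam\Phi(X)\leq C\Lambda^{-\ell(X)}$ and the lower bound $\diam(X\cup Y)\geq\tfrac1C\Lambda^{-\min\{\ell(X),\ell(Y)\}}$ from Lemma \ref{mdiam}. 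A secondary point to check is that the choice of $\Phi(X)$ among the (boundedly many) $n$-tiles meeting $X$ is irrelevant up to a bounded change, which again follows because any two such tiles are at bounded $\eta'$-distance; hence $\Phi$ need only be defined up to bounded ambiguity, which suffices for a rough-isometry.
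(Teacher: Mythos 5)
Your plan is correct and follows essentially the same route as the paper: the paper likewise constructs a level-preserving map sending each tile to a tile of the other decomposition that intersects it (there, a genuine bijection obtained by matching the tiles containing each point of $f^{-n}(p)$ for a fixed $p\in\post(f)$), and then deduces the additive distortion bound from exactly your chain (i)--(iii), i.e.\ the comparison $\diam(X\cup Y)\asymp\diam(X'\cup Y')$ via Lemma \ref{charvisual}, Lemma \ref{mdiam}, Proposition \ref{mp}, and equation \eqref{gp}. The only divergence is that the paper's bijection makes surjectivity automatic while you argue coarse surjectivity separately (your parenthetical ``path of length $\leq 3$ via level-$(n-1)$ tiles'' is shaky since tiles of the two decompositions are not nested, but the needed bounded $\eta'$-distance follows immediately from the same Lemma \ref{mdiam}/Proposition \ref{mp} machinery), which is immaterial.
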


\begin{proof}
By equation \eqref{gp}, for any $X,Y\in \G(f,\mathcal C)$, we have
\begin{eqnarray} \label{dp}
  \eta(X,Y) &=& \ell(X)+\ell(Y)+2-2(X,Y).
\end{eqnarray}
We have similar relations for the path metric $\eta'$ of $\G'$. Let $m=m_{f,\mathcal C}$ and $m'=m_{f,\mathcal C'}$ as defined in equation \eqref{mG}.
We know that $m(X,Y)$ and $(X,Y)$ are equal up to a constant that only depends on $f$ by Proposition \ref{mp}. So
if we can show that there exists a level-preserving bijection $g\:\G\ra \G'$ and a constant $\lambda\geq 0$, such that for any $X,Y\in \G$,
\begin{eqnarray*}
  m(X,Y)-\lambda\leq m'(g(X),g(Y))\leq m(X,Y)+\lambda,
\end{eqnarray*}
then by equation \eqref{dp}, the map $g$ will be a rough isometry between the path metrics of $\G$ and $\G'$.

Fix $p\in {\rm post}(f)$. We will define
\[g\: \G(f,\mathcal C)\ra \G(f,\mathcal C')\]
by specifying a bijection between $n$-tiles of $(f,\mathcal C)$ and $(f,\mathcal C')$ for all $n\geq -1$. For $n=-1$, let
\[g(\S^2)=\S^2.\]
For $n\geq 0$, and for any $q\in f^{-n}(p)$, we claim that there exists a bijection $g_{n,q}$ between $n$-tiles of $(f,\mathcal C)$ containing $q$ and $n$-tiles of $(f,\mathcal C')$ containing $q$,
\begin{equation*}
   g_{n,q}\: \Big\{n\mbox{-tile } X\in \D^n(f,\mathcal C): q\in X \Big\}\ra \Big\{n\mbox{-tile } X'\in \D^n(f,\mathcal C'): q\in X' \Big\}.
\end{equation*}
Indeed, the number of tiles containing $q$ is equal to twice the degree of $f^n$ at $q$, and this justifies the existence of the bijection $g_{n,q}$. Since every $n$-tile contains exactly one point in $f^{-n}(p)$, we get a bijection of all $n$-tiles by $g_{n,q}$ for $q\in f^{-n}(p)$.

For any $X, Y\in \G$, let $X',Y' \in \G'$ be their images under $g$. It follows from the definition of $g$ that
\[X\cap X'\not=\emptyset \mbox{ and } Y\cap Y'\not=\emptyset.\]
Now we are going to show that there exists $k\geq 1$, such that for any $X,Y\in \G$,
\begin{eqnarray*}
\frac1{k}\diam (X'\cup Y') \leq \diam (X\cup Y)\leq k\diam (X'\cup Y').
\end{eqnarray*}
Let $m=m(X,Y)$. We have that
\begin{eqnarray*}
\diam (X\cup Y)&\leq &\diam(X) + \diam (X'\cup Y') + \diam (Y)\\
&\leq & C^2 \diam (X')+C^2\diam(Y') + \diam(X'\cup Y')\\
&\leq & (C^2+1) \diam (X'\cup Y'),
\end{eqnarray*}
where $C>1$ is the same $C$ as in Lemma \ref{charvisual}, which only depends on $f$, $\mathcal C$ and $\mathcal C'$.
This implies that
\[\diam (X\cup Y)\leq k\diam (X'\cup Y'), \]for some $k>1$ only depending on $f$. Similarly, we get that
\[\diam (X'\cup Y')\leq k\diam (X\cup Y). \]

Since $\diam (X\cup Y)$ and $\Lambda^{-m(X,Y)}$ are the same up to a scaling by Lemma \ref{mdiam}, there exists a constant $\lambda>0$, such that
\[m(X,Y)-\lambda\leq m'(g(X),g(Y))\leq m(X,Y)+\lambda \] for all $X,Y\in \G(f,\mathcal C)$.
\end{proof}

\begin{re}
In the proof of Proposition \ref{roughgg}, the bijective rough-isometry $g$ between tile graphs of two different Jordan curves induces a bijection $g_{\infty}$ on the boundary at infinity of these two tile graphs.
\end{re}

\begin{re}
Theorem \ref{gh} and Proposition \ref{roughgg} should be compared to Theorem 3.3.1 in \cite{HPCoarse}. More specifically, Ha{\"{\i}}ssinsky and Pilgrim introduce a collection of graphs for a more general notion of an expanding Thurston map and prove that they are Gromov hyperbolic, and quasi-isometric to each other.
One should be able to prove that the tile graph $\G(f, {\mathcal C})$ is quasi-isometric to a graph from Theorem 3.3.1 in \cite{HPCoarse}, and hence deduce an alternative proof of Theorem \ref{gh}.
\end{re}

\begin{pro} \label{samevisual}
The boundary at infinity $\partial_{\infty}\G$ of a tile graph $\G(f,\mathcal C)$ can be identified with $\S^2$.
Under this identification, a metric $d$ is a visual metric on $\S^2$ with respect
to the expanding Thurston map $f$ if and only if $d$ is a visual metric on $\partial_{\infty}\G$ (in the sense of Gromov hyperbolic spaces).
\end{pro}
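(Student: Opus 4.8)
The plan is to exhibit a concrete identification of $\partial_{\infty}\G$ with $\S^2$ and then to reduce the ``if and only if'' to a single estimate: that the Gromov product on $\partial_{\infty}\G$ coincides, up to a bounded additive error, with the point function $m_{f,\mathcal C}(\cdot,\cdot)$ on $\S^2$ that, by Definition~\ref{visual} and Lemma~\ref{charvisual}, governs visual metrics for $f$. Throughout I fix the visual metric $\rho$ for $f$ with expansion factor $\Lambda$ used in the construction of $\G$, so that Lemma~\ref{mdiam} and Proposition~\ref{mp} are available; I reserve the letter $d$ for the metric appearing in the statement, and write $a\asymp b$ when $a/b$ is bounded above and below by constants depending only on $f$.

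First I would build the map $\iota\colon\partial_{\infty}\G\to\S^2$ sending the class of a sequence $\{X_i\}$ of tiles with $(X_i,X_j)\to\infty$ to the point $\bigcap_{N\ge 1}\overline{\bigcup_{i\ge N}X_i}$. That this is a single point follows from Proposition~\ref{mp} ($(X_i,X_j)\to\infty$ forces $m(X_i,X_j)\to\infty$) and Lemma~\ref{mdiam} (hence $\diam_{\rho}(X_i\cup X_j)\to0$), so the decreasing closed sets $\overline{\bigcup_{i\ge N}X_i}$ have diameters tending to $0$ and, by compactness of $\S^2$, shrink to a point $\xi$; the same estimate shows each representing sequence ``shrinks onto'' its limit, $\sup_{x\in X_i}\rho(x,\xi)\to0$. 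From this $\iota$ is well defined on equivalence classes and injective (two sequences with the same image satisfy $\diam_{\rho}(X_i\cup Y_i)\to0$, hence $m(X_i,Y_i)\to\infty$, hence $(X_i,Y_i)\to\infty$, so they are equivalent), and surjective (given $\xi\in\S^2$, picking for each $n\ge-1$ an $n$-tile $X_n\ni\xi$ gives $m(X_n,X_m)=\min\{n,m\}\to\infty$ because $X_n\cap X_m\ni\xi$, so $\{X_n\}$ converges to infinity with $\iota(\{X_n\})=\xi$). Using the metric $\rho$ this $\iota$ is in fact a homeomorphism, but only the bijection is needed below.

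The heart of the matter is the estimate
\[\bigl|(\xi,\xi')_{\S^2}-m_{f,\mathcal C}(\xi,\xi')\bigr|\le C_1\qquad\text{for all distinct }\xi,\xi'\in\partial_{\infty}\G=\S^2,\]
for a constant $C_1\ge0$ depending only on $f$, where $(\xi,\xi')_{\S^2}$ is the Gromov product on $\partial_{\infty}\G$ based at the $(-1)$-tile $\S^2$ and $m_{f,\mathcal C}$ is the point function from the definition preceding Lemma~\ref{twom}. For arbitrary representatives $\{X_i\}$ of $\xi$ and $\{Y_i\}$ of $\xi'$, Proposition~\ref{mp} gives $(X_i,Y_i)_{\S^2}=m(X_i,Y_i)+O(1)$ and Lemma~\ref{mdiam} gives $\Lambda^{-m(X_i,Y_i)}\asymp\diam_{\rho}(X_i\cup Y_i)$; since $X_i$ and $Y_i$ shrink onto $\xi$ and $\xi'$, one has $\diam_{\rho}(X_i\cup Y_i)\to\rho(\xi,\xi')$ \emph{for every} choice of representatives, whence $\liminf_i(X_i,Y_i)_{\S^2}=-\log_{\Lambda}\rho(\xi,\xi')+O(1)$ in all cases, and therefore $(\xi,\xi')_{\S^2}=-\log_{\Lambda}\rho(\xi,\xi')+O(1)$. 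Since $\rho$ is a visual metric for $f$, Lemma~\ref{charvisual} applied with the curve $\mathcal C$, together with Lemma~\ref{twom} to pass between $m_{f,\mathcal C}$ and $m'_{f,\mathcal C}$, gives $\rho(\xi,\xi')\asymp\Lambda^{-m_{f,\mathcal C}(\xi,\xi')}$, and the estimate follows.

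With this estimate the two implications are short. If $d$ is a visual metric for $f$ with expansion factor $\Lambda_d$, then by the forward direction of Lemma~\ref{charvisual} it satisfies conditions (1) and (2) for $(f,\mathcal C)$, whence (again via Lemma~\ref{twom}) $d(\xi,\xi')\asymp\Lambda_d^{-m_{f,\mathcal C}(\xi,\xi')}\asymp\Lambda_d^{-(\xi,\xi')_{\S^2}}$, so $d$ is a visual metric on $\partial_{\infty}\G$ with the same expansion factor. Conversely, if $d$ is a visual metric on $\partial_{\infty}\G$, then after passing to the base point $\S^2$ (which changes a visual metric by at most a bounded multiplicative factor and preserves the expansion factor; see \cite{BSElements}) we get $d(\xi,\xi')\asymp\Lambda_d^{-(\xi,\xi')_{\S^2}}\asymp\Lambda_d^{-m_{f,\mathcal C}(\xi,\xi')}$, which is precisely the defining inequality (Definition~\ref{visual}) of a visual metric for $f$ with the curve $\mathcal C$. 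I expect the main obstacle to be the middle step, and within it the claim that the infimum over all representing sequences in the boundary Gromov product is attained up to a uniform additive constant: the upper bound on $(\xi,\xi')_{\S^2}$ is immediate by testing on the nested representatives of the first step, but the matching lower bound requires ruling out that some other representatives $\{X_i\},\{Y_i\}$ make $m(X_i,Y_i)$—hence the Gromov product—much larger than $m_{f,\mathcal C}(\xi,\xi')$, which is exactly where one needs that $X_i\cup Y_i$ eventually contains points arbitrarily close to both $\xi$ and $\xi'$, so that $\limsup_i\diam_{\rho}(X_i\cup Y_i)\le\rho(\xi,\xi')$ and Lemma~\ref{mdiam} applies. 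The remainder is bookkeeping with the additive constants of Proposition~\ref{mp} and Lemmas~\ref{mdiam},~\ref{twom}, and~\ref{charvisual}.
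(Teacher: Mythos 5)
Your proposal is correct and follows essentially the same route as the paper: identify $\partial_{\infty}\G$ with $\S^2$ via limit points of shrinking tile sequences, then use Lemmas~\ref{mdiam} and~\ref{pdiam} together with Proposition~\ref{mp} to show the boundary Gromov product based at the $(-1)$-tile agrees with $m_{f,\mathcal C}$ (equivalently, with $-\log_{\Lambda}$ of a fixed visual metric) up to a uniform additive constant, from which both implications follow. Your explicit handling of the infimum over representatives and of the base-point change for visual metrics is exactly the bookkeeping the paper carries out (equations \eqref{infdiam}--\eqref{mxi}), so no genuinely new ingredient is involved.
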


Here the metric $d$ on $\partial_{\infty}\G$ means the pull-pack metric of $d$ under the identification.

\begin{proof}
Let $d$ be a visual metric with expansion factor $\Lambda$ of $\S^2$ with respect to $f$.

For any  sequence $\{X_n\}$ converging to $\infty$
\[\lim_{i,j\ra \infty} (X_i,X_j)=\infty,\] we have a filtration
\begin{eqnarray*}
\bigcup_{i={1}}^{\infty} X_i\supset \bigcup_{i={2}}^{\infty} X_i\supset\ldots \supset\bigcup_{i={n}}^{\infty} X_i \supset \bigcup_{i=n+1}^{\infty} X_i\supset\ldots
\end{eqnarray*}
with
\[ \diam \left(\bigcup_{i={n}}^{\infty} X_i\right)\ra 0\mbox{ as } n\ra \infty.\]
Hence, there exists a limit point $x\in \S^2$ such that for any $\epsilon>0$, there exists $N>0$ such that for all $n>N$,
\begin{eqnarray} \label{inepsilonn}
\bigcup_{i={n}}^{\infty} X_i\subset N_{\epsilon}(x),
\end{eqnarray}
where $N_{\epsilon}(x)$ is an $\epsilon$-neighborhood of $x$ in $\S^2$, i.e.,
\[  X_n\subset N_{\epsilon}(x),\] or
\begin{eqnarray*}
d(x,X_n)< \epsilon.
\end{eqnarray*}
We claim that the limit point is unique. Indeed, if there exists $y\in \S^2$ also satisfying \eqref{inepsilonn}, then
\[d(x,y)\leq d(x,\diam(X_n))+d(y,\diam(X_n))\ra 0\mbox{ as } n\ra\infty.\]
Hence, $x=y$. 
Let $\{Y_n\}$ be an sequence  converging to infinity equivalent to $\{X_i\}$, i.e.,
\[\lim_{i\ra \infty} (X_i,Y_i)=\infty.\]
We claim that the limit point of $\{Y_n\}$ is $x$. Indeed, by Lemma \ref{pdiam}, we have
\[d(x,Y_n)\leq d(x,X_n)+d(Y_n,X_n)\leq d(x,X_n)+k{\Lambda}^{-(Y_n,X_n)}\ra 0 \]
as $n$ goes to infinity since $(Y_n,X_n)\ra \infty$. Hence, any two equivalent sequences converging to infinity have the same limit point, and we can assign a limit point to an equivalence class of sequences converging to infinity.

We define
\[h\: \partial_{\infty}\G\ra \S^2\]
by mapping any equivalence class of sequences  converging to infinity to its limit point. For any $x\in \S^2$, there exists $X_i$ with $\ell(X_i)=i$ containing $x$, for any $i\geq -1$. Then by Lemma \ref{pdiam}, we have that
\begin{eqnarray*}
(X_i,X_j)&\geq& -\log_{\Lambda}\diam(X_i\cup X_j)+\log k\\
&\geq&-\log_{\Lambda} \big(\min\{ \diam(X_i),\diam(X_j)\} \big) +\log k \ra \infty
\end{eqnarray*}
as $i,j\ra\infty$, where $k\geq 1$ is a constant as in Lemma \ref{pdiam}.
So $\{X_i\}$ is a converging sequence with limit point $x$. Hence, the map $h$ is surjective.
In order to prove the injectivity, for any two 
sequences converging to infinity $\{X_i\}$ and $\{Y_i\}$, we let $x$ and $y$ be their limit points respectively. If $x=y$, then
\[\diam(X_n\cup Y_n) \ra 0 \mbox{ as } n\ra \infty.\]
So by Lemma \ref{mdiam} and \ref{mp},
\[(X_n,Y_n) \geq m(X_n,Y_n)-1 \geq -\log_{\Lambda}\big(C\diam(X_n\cup Y_n)\big) -1\ra \infty \]
as $n$ goes to infinity, which implies that $\{X_i\}$ and $\{Y_i\}$ are equivalent.
Hence, 
$h$ is injective.

We only need to show that that there exists a constant $C>0$ such that for any $\xi,\xi'\in \partial_{\infty}\G$, $x=h(\xi)$ and $y=h(\xi')$,
\[\frac1{C} \Lambda^{-(\xi,\xi')}\leq d(x,y)\leq C\Lambda^{-(\xi,\xi')}. \]
Pick any $\{X_n\}\in \xi$ and $\{Y_n\}\in \xi'$. By Lemma \ref{mdiam}
\begin{eqnarray*}
\frac1{C}\Lambda^{-(X_n,Y_n)}\leq \diam(X_n\cup Y_n) \leq C \Lambda^{-(X_n,Y_n)}.
\end{eqnarray*}
Taking the limit superior, we get
\begin{eqnarray*}
\frac1{C}\limsup_{n\ra\infty} \Lambda^{-(X_n,Y_n)}\leq \lim_{n\ra\infty}\diam(X_n\cup Y_n)
\leq C\limsup_{n\ra\infty}  \Lambda^{-(X_n,Y_n)}.
\end{eqnarray*}
Hence, we have
\begin{eqnarray} \label{infdiam}
\frac1{C}\Lambda^{-\liminf_{n\ra\infty} (X_n,Y_n)}&\leq& \lim_{n\ra\infty}\diam(X_n\cup Y_n)\\
 &&\hspace{2cm}\leq C \Lambda^{-\liminf_{n\ra\infty} (X_n,Y_n)}. \nonumber
\end{eqnarray}
Since
\[d(x,y)=\lim_{n\ra\infty}\diam(X_n,Y_n)\] and
\[(\xi,\xi')\leq \liminf_{n\ra\infty} (X_n,Y_n),\]
by inequality \eqref{infdiam}
\begin{eqnarray} \label{dleq}
d(x,y)&=&\lim_{n\ra\infty}\diam(X_n\cup Y_n) \nonumber\\
 &\leq& {C} \Lambda^{-\liminf_{n\ra\infty} (X_n,Y_n)}\\
 &\leq& {C} \Lambda^{- (\xi,\xi')}.\nonumber
\end{eqnarray}
Since \[(\xi,\xi')= \inf \liminf_{n\ra\infty} (X_n,Y_n)\] where infimum is taken for all $\{X_n\}\in \xi$ and $\{Y_n\}\in \xi'$, by inequality \eqref{infdiam},
\begin{eqnarray*}
\frac1{C}\sup\Lambda^{-\liminf_{n\ra\infty} (X_n,Y_n)}\leq \lim_{n\ra\infty}\diam(X_n\cup Y_n) =d(x,y)
\end{eqnarray*}
where supremum is taken for all $\{X_n\}\in \xi$ and $\{Y_n\}\in \xi'$. Hence,
\begin{eqnarray} \label{dgeq}
\frac1{C}\Lambda^{-(\xi,\xi')}=\frac1{C}\Lambda^{-\inf\liminf_{n\ra\infty} (X_n,Y_n)}\leq d(x,y).
\end{eqnarray}
Combining equations \eqref{dleq} and \eqref{dgeq}, we get that
\begin{eqnarray} \label{dleqgeq}
\frac1{C}\Lambda^{-(\xi,\xi')}\leq d(x,y)\leq 4C\Lambda^{-(\xi,\xi')},
\end{eqnarray}
so
\begin{eqnarray*}
\frac1{C}\Lambda^{-(\xi,\xi')}\leq d(h(\xi),h(\xi'))\leq C\Lambda^{-(\xi,\xi')}
\end{eqnarray*}
for all $\xi,\xi'\in \partial_{\infty}\G$.
Therefore, the pull-back of the metric $d$ on $\S^2$ under $h$ is a visual metric on $\partial_{\infty}\G$.

Since $d$ is a visual metric with respect to $f$, equation \eqref{dleqgeq} implies that there exists a constant $c\geq 0$ such that for all $x,y\in \S^2$, and $\xi=h^{-1}(x)$, $\xi=h^{-1}(y)$,
\begin{eqnarray} \label{mxi}
(\xi,\xi')-c\leq m(x,y)\leq (\xi,\xi')+c.
\end{eqnarray}
Let $\rho$ be a visual metric on $\partial_{\infty}\G$ on the Gromov hyperbolic space, so there exists constant $k\geq 1$, such that for any $\xi,\xi'\in \partial_{\infty}\G$,
\[ \frac1{k}\Lambda^{-(\xi,\xi')}\leq \rho(\xi,\xi' ) \leq k\Lambda^{-(\xi,\xi')}.\]
By equation \eqref{mxi}, there exists a constant $k'\geq 1$, such that
\[\frac1{k'}\Lambda^{-(x,y)}\leq \frac1{k}\Lambda^{-(\xi,\xi')}\leq \rho(h^{-1}(x),h^{-1}(y) ) \leq k\Lambda^{-(\xi,\xi')}\leq k'\Lambda^{-m(x,y)},\]
where $x,y\in \S^2$, $\xi=h^{-1}(x)$ and $\xi=h^{-1}(y)$.
Therefore, the pull-back of the metric $\rho$ on $\partial_{\infty}\G$ under $h^{-1}$ is a visual metric on $\S^2$.
\end{proof}

For any Jordan curves $\mathcal C$ and $\mathcal C'$ containing $\post (f)$, let $\partial_{\infty}\G=\partial_{\infty}\G(f,\mathcal C)$ and $\partial_{\infty}\G'=\partial_{\infty}\G(f,\mathcal C')$ be the boundary at infinity of the tile graphs $\G(f,\mathcal C)$ and $\G(f,\mathcal C')$ respectively. By the proposition above, there exist identifications
\[h\: \partial_{\infty}\G \ra \S^2\]
and
\[h'\: \partial_{\infty}\G' \ra \S^2.\]
So we have the following diagram
\[
\xymatrix@R=0.5cm{
  \partial_{\infty}\G \ar[dd]_{g_{\infty}} \ar[dr]^{h}             \\
                & \S^2\ar[dl]^{(h')^{-1}}         \\
  \partial_{\infty}\G'                }
\]
This induced bijection $g_{\infty}=(h')^{-1}\circ h$ should be the same as $g_{\infty}$ as in the remark after Proposition \ref{roughgg}. In addition, under this identification,  visual metrics on $\partial_{\infty}\G$ and $\partial_{\infty}\G'$  are also identified. This is the following corollary.

\begin{cor} \label{corsamevisual}
For any Jordan curves $\mathcal C$ and $\mathcal C'$ containing $\post (f)$, there exists a natural identification between $\G=\partial_{\infty}\G(f,\mathcal C)$ and $\G'=\partial_{\infty}\G(f,\mathcal C')$. Under this identification, a metric $\rho$ is a visual metric on $\partial_{\infty}\G$ if and only if it is a visual metric on $\partial_{\infty}\G'$.
\end{cor}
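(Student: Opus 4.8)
The plan is to derive this purely formally from Proposition~\ref{samevisual}, applied once to $\mathcal C$ and once to $\mathcal C'$. That proposition provides identifications $h\: \partial_{\infty}\G(f,\mathcal C)\ra \S^2$ and $h'\: \partial_{\infty}\G(f,\mathcal C')\ra \S^2$ such that, for a metric $d$ on $\S^2$, the pulled-back metric $h^{*}d$ is a visual metric on $\partial_{\infty}\G(f,\mathcal C)$ in the Gromov hyperbolic sense precisely when $d$ is a visual metric for $f$ on $\S^2$, and likewise for $h'$ and $\partial_{\infty}\G(f,\mathcal C')$. I would take the natural identification to be $g_{\infty}:=(h')^{-1}\circ h\colon \partial_{\infty}\G(f,\mathcal C)\ra \partial_{\infty}\G(f,\mathcal C')$ --- the bijection from the diagram preceding the corollary, which coincides with the boundary map induced by the rough-isometry of Proposition~\ref{roughgg}.

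The argument itself would go as follows. Given a visual metric $\rho$ on $\partial_{\infty}\G=\partial_{\infty}\G(f,\mathcal C)$, transport it to $\S^2$ by setting $d(x,y):=\rho(h^{-1}(x),h^{-1}(y))$, so that $h^{*}d=\rho$. Proposition~\ref{samevisual} applied to $\mathcal C$ then shows that $d$ is a visual metric for $f$ on $\S^2$, and Proposition~\ref{samevisual} applied to $\mathcal C'$ shows that $(h')^{*}d$ is a visual metric on $\partial_{\infty}\G'$ in the Gromov hyperbolic sense. Since $h=h'\circ g_{\infty}$, a one-line computation gives $((h')^{*}d)(g_{\infty}(\xi),g_{\infty}(\xi'))=d(h(\xi),h(\xi'))=(h^{*}d)(\xi,\xi')=\rho(\xi,\xi')$ for all $\xi,\xi'$, i.e.\ under the identification $g_{\infty}$ the metric $\rho$ is carried exactly to $(h')^{*}d$. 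Running the same chain of equivalences in the opposite direction (swapping the roles of $h$ and $h'$, and of $g_{\infty}$ and $g_{\infty}^{-1}$) yields the converse, so $\rho$ is a visual metric on $\partial_{\infty}\G$ if and only if its image under $g_{\infty}$ is a visual metric on $\partial_{\infty}\G'$.

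I expect no substantive obstacle: the statement is a formal corollary of Proposition~\ref{samevisual}, and the only point requiring care is the bookkeeping of the directions of the pull-back metrics, i.e.\ checking that the two identifications of the respective boundaries with $\S^2$ are compatible through $g_{\infty}=(h')^{-1}\circ h$. If one wished to avoid routing through $\S^2$, an alternative plan would be to invoke the general principle that a rough-isometry of Gromov hyperbolic spaces perturbs Gromov products by a bounded additive amount and hence induces a boundary homeomorphism preserving the class of visual metrics (together with their possible expansion factors); combined with Proposition~\ref{roughgg} this gives the corollary directly, and simultaneously identifies the relevant map with the $g_{\infty}$ appearing in the remark after that proposition.
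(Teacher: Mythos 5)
Your argument is correct and is essentially the paper's own: the paper likewise takes the identification $g_{\infty}=(h')^{-1}\circ h$ from the diagram preceding the corollary and deduces the statement by applying Proposition~\ref{samevisual} once for $\mathcal C$ and once for $\mathcal C'$, routing the visual-metric equivalence through the class of visual metrics for $f$ on $\S^2$. Your bookkeeping of the pull-backs (and the alternative via Proposition~\ref{roughgg} and invariance of Gromov products under rough-isometry) matches the intended proof, so nothing further is needed.
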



\section{Asymptotic Upper Curvature}
\noindent
In this section, we define the asymptotic upper curvature for an expanding Thurston map. After review the definition of Latt\`es maps, we give a curvature characterization of Latt\`es maps.

Let $f \: \S^2 \ra S^2$ be an expanding Thurston map.
We define the \emph{asymptotic upper curvature} of $f$ as
\begin{eqnarray} \label{asyf}
K_u(f)\:=K_u(\G(f,\mathcal C)),
\end{eqnarray}
where $\mathcal C \subset \S^2$ is any Jordan curve containing $\post(f)$ and $\G=\G(f,\mathcal C)$ denotes
the Gromov hyperbolic graph constructed from the cell decompositions of $(f, \mathcal C)$.
For any Jordan curves $\mathcal C$ and $\mathcal C'$, the Gromov hyperbolic space $\G(f,\mathcal C)$ and $\G(f,\mathcal C')$ are rough-isomeric by Proposition \ref{roughgg}, and the asymptotic upper curvature is invariant under rough-isometry, so
\[K_u(\G(f,\mathcal C))=K_u(\G(f,\mathcal C')). \]
Therefore, the asymptotic upper curvature $K(f)$ is well-defined in equation \eqref{asyf}.

A \emph{Latt\`es map} $f\:\widehat\C\ra \widehat\C$ is a rational map that is obtained from a finite quotient of a conformal torus endomorphism, i.e., the map $f$ satisfies the following commutative diagram:
\begin{equation}\label{lat}
    \begin{CD}
\T @>\bar{A}>> \T\\
@V\Theta VV @VV\Theta V\\
\widehat\C @>f>> \widehat\C
\end{CD}
\end{equation}
where $\bar A$ is a map of a torus $\T$ that is a quotient of an affine map of the complex plane, and $\Theta$ is a finite-to-one holomorphic map. Latt\`es maps were the first examples of rational maps whose Julia set is the whole sphere $\widehat \C$, and a Latt\`es map is an expanding Thurston map. In \cite{YinLattes}, we have the following combinatorial characterization of Latt\`es maps:
\begin{thm}[Yin, 2011] \label{main0}
A map $f\:\S^2\ra \S^2$ is topologically conjugate to a Latt\`es map if and only if the following conditions hold:
\begin{itemize}
  \item $f$ is an expanding Thurston map;
  \item $f$ has no periodic critical points;
  \item there exists $c>0$ such that $D_n\geq c(\deg f)^{n/2}$ for all $n>0$.
\end{itemize}
\end{thm}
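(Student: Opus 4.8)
The plan is to establish the two implications separately; the forward one is short, and the real content is in the converse, which I would approach by quasisymmetric uniformization followed by a rigidity argument for rational maps.

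For the forward implication, suppose $f$ is topologically conjugate to a Latt\`es map. Being an expanding Thurston map with no periodic critical points is invariant under topological conjugacy and holds for Latt\`es maps, so only the growth estimate needs work. Here I would use the defining diagram~\eqref{lat}: $\bar A$ is covered by an affine map $z\mapsto\alpha z+\beta$ with $|\alpha|^{2}=\deg\bar A=\deg f$, so it expands the flat metric on $\T$ by $|\alpha|=(\deg f)^{1/2}$. Pushing this metric forward by $\Theta$ gives the canonical orbifold metric $d$ on $\S^{2}$; relative to a suitable $f$-invariant Jordan curve, every $n$-tile then has $d$-diameter comparable to $(\deg f)^{-n/2}$ and disjoint $n$-cells lie comparably far apart, so by Lemma~\ref{charvisual}, $d$ is a visual metric with expansion factor $(\deg f)^{1/2}$. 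A connected union of $n$-tiles joining two disjoint $0$-edges has $d$-diameter bounded below by a fixed constant, hence contains at least a constant times $(\deg f)^{n/2}$ tiles, which gives $D_{n}\geq c(\deg f)^{n/2}$.

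For the converse, assume $f$ is an expanding Thurston map with no periodic critical points and $D_{n}\geq c(\deg f)^{n/2}$ for all $n$. First I would invoke the a priori bound $\Lambda_{0}(f)=\lim_{n}D_{n}^{1/n}\leq(\deg f)^{1/2}$ (true because a visual metric of expansion $\Lambda$ makes $\S^{2}$ Ahlfors regular of exponent $\log(\deg f)/\log\Lambda$, which cannot fall below the topological dimension $2$), together with the a priori upper bound $D_{n}\leq C(\deg f)^{n/2}$ from \cite{BMExpanding}. Combined with the hypothesis these give $D_{n}\asymp(\deg f)^{n/2}$, and this uniform estimate yields a visual metric $d$ for $f$ with expansion factor exactly $(\deg f)^{1/2}$. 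With respect to $d$ the measure of maximal entropy $\mu_{f}$ has constant Jacobian $\deg f$, so it assigns each $n$-tile mass comparable to $(\deg f)^{-n}$; since $n$-tiles have $d$-diameter comparable to $(\deg f)^{-n/2}$ and only boundedly many meet any ball of that radius (this uses the absence of periodic critical points, to bound local degrees), one gets $\mu_{f}(B_{d}(x,r))\asymp r^{2}$. Hence $(\S^{2},d)$ is Ahlfors $2$-regular, and it is linearly locally connected because the tiles form a self-similar family of uniform quasidisks. By the Bonk--Kleiner quasisymmetric uniformization theorem, $(\S^{2},d)$ is quasisymmetrically equivalent to the standard Riemann sphere, and then the rationality criterion for expanding Thurston maps in \cite{BMExpanding} shows that $f$ is topologically conjugate to a postcritically finite rational map $R$ with $J(R)=\widehat\C$, $\deg R=\deg f$, and $\Lambda_{0}(R)=\Lambda_{0}(f)=(\deg R)^{1/2}$.

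It then remains to conclude that $R$ is a Latt\`es map, and I expect this to be the main obstacle. The equality $\Lambda_{0}(R)=(\deg R)^{1/2}$ says that $\mu_{R}$ attains Hausdorff dimension $2$ in the visual metric; since Ahlfors $2$-regularity is the borderline case, this forces the Lyapunov exponent of $\mu_{R}$ down to its minimum $\chi_{\mu_{R}}=\tfrac12\log\deg R$, and then the dimension formula $\dim_{H}\mu_{R}=\log(\deg R)/\chi_{\mu_{R}}=2$ places $R$ in the extremal case of Zdunik's theorem on the maximal-entropy measure, which forces $R$ to have parabolic orbifold; a postcritically finite rational map with parabolic orbifold is a Latt\`es map. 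The delicate point throughout this last step is that the identification of $\S^{2}$ with a visual metric and the spherical sphere is only quasisymmetric, and Hausdorff dimension is not a quasisymmetry invariant --- so $\dim_{H}\mu_{f}$ in the visual and spherical metrics cannot simply be equated, and one must route the comparison through the Lyapunov exponent, which is a genuine analytic invariant, using that $2$ is precisely the dimension of the ambient sphere. Making precise the three ingredients --- that $D_{n}\asymp(\deg f)^{n/2}$ gives an Ahlfors $2$-regular, linearly locally connected visual metric sphere; Bonk--Kleiner uniformization together with the rationality criterion of \cite{BMExpanding}; and the extremal case of Zdunik's rigidity theorem --- is where the real work lies.
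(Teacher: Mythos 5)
A preliminary remark: this paper does not prove Theorem \ref{main0} at all --- it is quoted as the main result of \cite{YinLattes} --- so your proposal has to be measured against the proof given there. In outline you follow the same route: the flat orbifold metric as a visual metric with expansion factor $(\deg f)^{1/2}$ for the forward direction, and, for the converse, a visual metric with expansion factor $(\deg f)^{1/2}$, Ahlfors $2$-regularity via the measure of maximal entropy, Bonk--Kleiner uniformization, the Bonk--Meyer rationality criterion, and a rigidity theorem of Zdunik. One intermediate step is stated too casually, however: passing from $D_n\geq c(\deg f)^{n/2}$ to a visual metric with expansion factor \emph{exactly} $(\deg f)^{1/2}$ is not automatic, since the supremum $\Lambda_0$ of expansion factors of visual metrics is in general not attained (as the introduction of the present paper points out). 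What is needed is the existence criterion of Bonk--Meyer: a visual metric with expansion factor $\Lambda$ exists if and only if $D_n\geq c\Lambda^n$ for all $n$, and this should be invoked explicitly. Your auxiliary appeal to a uniform a priori upper bound $D_n\leq C(\deg f)^{n/2}$ is neither needed for this nor clearly available; what one knows a priori is only $\lim_n D_n^{1/n}=\Lambda_0\leq(\deg f)^{1/2}$.

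The genuine gap is in your endgame. You rightly observe that Hausdorff dimension does not transfer across the quasisymmetric identification with $\widehat\C$, but the proposed remedy --- that Ahlfors $2$-regularity of the visual metric ``forces'' $\chi_{\mu_R}=\tfrac12\log\deg R$ --- is asserted without any mechanism: nothing in your argument controls spherical derivatives, so the dimension formula for $\mu_R$ cannot yet be brought to bear, and as written this step does not close. The missing idea is absolute continuity: a quasisymmetric homeomorphism between the standard sphere (Ahlfors $2$-regular and Loewner) and an Ahlfors $2$-regular metric $2$-sphere is absolutely continuous in measure, so the conjugating homeomorphism transports $\mu_f\asymp\mathcal{H}^2_d$ to the measure of maximal entropy $\mu_R$ of the rational map and shows that $\mu_R$ is absolutely continuous with respect to spherical Lebesgue measure. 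Zdunik's theorem in this form (a rational map whose measure of maximal entropy is absolutely continuous with respect to Lebesgue measure has parabolic orbifold, hence is a Latt\`es map when the Julia set is the whole sphere) then finishes the converse, which is essentially how the proof in \cite{YinLattes} concludes. With that substitution, and with the visual-metric existence criterion made explicit, your outline matches the actual argument.
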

We have the following statement(see Corollary 8.2 in \cite{YinLattes}).
\begin{cor} \label{cormain}
A map $f\:\S^2\ra \S^2$ is topologically conjugate to a Latt\`es map if and only if the followings conditions hold:
\begin{itemize}
  \item $f$ is an expanding Thurston map;
  \item $f$ has no periodic critical points;
  \item there exists a visual metric on $\S^2$ with respect to $f$ with expansion factor $\Lambda=\deg(f)^{1/2}$.
\end{itemize}
\end{cor}

This leads to an curvature characterization of Latt\`es maps as follows.

\begin{thm} \label{main2}
Let $f \: \S^2 \ra \S^2$ be an expanding Thurston map.
The asymptotic upper curvature of $f$ satisfies
\[K_u(f)\geq-\frac14\log^2(\deg f). \]
If in addition, the map $f$ has no periodic critical points, then the tile graph $\G=\G(f)$ is an AC$_u(\kappa)$-space with
\[ \kappa= -\frac14\log^2(\deg f),\]
if and only if the map $f$ is topologically conjugate to a Latt\`es map.
\end{thm}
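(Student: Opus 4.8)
The proof strategy connects three quantities: the asymptotic upper curvature $K_u(f)$, the combinatorial expansion factor $\Lambda_0(f)$, and the degree $\deg f$. By Theorem~\ref{acku} applied to $\G = \G(f,\mathcal C)$, we have $K_u(\G) = -\log^2 \Lambda_{\mathrm{vis}}$, where $\Lambda_{\mathrm{vis}}$ is the supremum of expansion factors of visual metrics on $\partial_\infty \G$. By Proposition~\ref{samevisual}, visual metrics on $\partial_\infty \G$ correspond exactly to visual metrics on $\S^2$ with respect to $f$, so $\Lambda_{\mathrm{vis}}$ equals the supremum of expansion factors of visual metrics for $f$, which by \cite[Theorem~1.5]{BMExpanding} is precisely $\Lambda_0(f)$. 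Hence $K_u(f) = -\log^2 \Lambda_0(f)$, and the whole theorem reduces to a statement about $\Lambda_0(f)$ versus $(\deg f)^{1/2}$.

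The plan is therefore as follows. First I would establish the general upper bound $\Lambda_0(f) \le (\deg f)^{1/2}$. This should follow from a counting argument: the number of $n$-tiles is $2(\deg f)^n$ by Lemma~\ref{tilenumber}(3), and a chain of $n$-tiles joining opposite sides of $\mathcal C$ cannot be arbitrarily short relative to the total tile count—more precisely, one bounds $D_n$ from below in terms of the number of tiles, or equivalently uses an isoperimetric/area estimate to show $D_n \ge c\,(\deg f)^{n/2}$ is impossible to beat, giving $\Lambda_0(f) = \lim D_n^{1/n} \le (\deg f)^{1/2}$. Taking $-\log^2$ of both sides (which reverses the inequality) yields $K_u(f) = -\log^2 \Lambda_0(f) \ge -\frac14 \log^2(\deg f)$, proving the first assertion. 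Note this direction does not need the no-periodic-critical-points hypothesis.

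Next, for the rigidity statement, assume $f$ has no periodic critical points. Then $\G$ being an $\mathrm{AC}_u(\kappa)$-space with $\kappa = -\frac14\log^2(\deg f)$ means, via Theorem~\ref{acku}, that for every $\Lambda < e^{\sqrt{-\kappa}} = (\deg f)^{1/2}$ there is a visual metric with expansion factor $\Lambda$; combined with the already-established upper bound, this is equivalent to $\Lambda_0(f) = (\deg f)^{1/2}$, i.e., the supremum in Theorem~\ref{acku} is actually attained (as the value $(\deg f)^{1/2}$ itself, or at least approached). I would then translate this into the hypothesis of Theorem~\ref{main0}: $\Lambda_0(f) = (\deg f)^{1/2}$ together with the fact that $\Lambda_0(f) = \lim_n D_n^{1/n}$ gives, after controlling the subexponential error (here one must be a little careful: $\Lambda_0 = (\deg f)^{1/2}$ only gives $D_n = (\deg f)^{n/2 + o(n)}$ a priori, whereas Theorem~\ref{main0} wants the clean bound $D_n \ge c(\deg f)^{n/2}$), the third bullet of Theorem~\ref{main0}. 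Conversely, if $f$ is topologically conjugate to a Latt\`es map, then $D_n \ge c(\deg f)^{n/2}$ holds, forcing $\Lambda_0(f) \ge (\deg f)^{1/2}$, hence equality, hence $K_u(f) = -\frac14\log^2(\deg f)$ and (by the attainment clause of Theorem~\ref{acku} for Latt\`es maps, where the supremum of visual expansion factors is known to be attained) $\G$ is an $\mathrm{AC}_u(\kappa)$-space with that $\kappa$.

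The main obstacle I anticipate is the gap between ``$\mathrm{AC}_u(\kappa)$ with $\kappa = -\frac14\log^2(\deg f)$'' and ``$D_n \ge c(\deg f)^{n/2}$'': the $\mathrm{AC}_u$ condition and Theorem~\ref{acku} only pin down the exponential growth \emph{rate} $\Lambda_0$, giving $D_n$ up to a subexponential factor, but Theorem~\ref{main0} demands a multiplicative lower bound with no loss. Bridging this requires either a sharper version of the correspondence (showing that $\mathrm{AC}_u(\kappa)$ at the critical value $\kappa = -\log^2 \Lambda_0$ forces the stronger estimate $D_n \gtrsim \Lambda_0^n$ rather than merely $\Lambda_0^{n-o(n)}$), or a direct argument using the additive constant $c$ in the $\mathrm{AC}_u(\kappa)$ inequality~\eqref{acspaceeq} to control the visual-metric constant $C$ uniformly and hence bound $D_n$ from below by $c'\Lambda_0^n$. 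I would expect the resolution to come from examining how the constant $c$ in \eqref{acspaceeq} propagates through Theorem~\ref{acku} into the multiplicative constant of the visual metric, and then through Lemma~\ref{charvisual} into a bound on $D_n$. The other routine-but-nontrivial point is the lower bound on $D_n$ used for the universal inequality; this is essentially a combinatorial counting/connectivity estimate on the cell decompositions that I expect to be standard given the machinery of \cite{BMExpanding}.
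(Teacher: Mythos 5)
Your overall framework (Theorem~\ref{acku} plus Proposition~\ref{samevisual} to identify $K_u(f)$ with $-\log^2$ of the supremal visual expansion factor, and the attained factor $(\deg f)^{1/2}$ for Latt\`es maps to get the easy direction) is the same as the paper's. Two problems remain, one minor and one essential. Minor: for $K_u(f)\ge -\frac14\log^2(\deg f)$ you need the \emph{upper} bound $\Lambda_0(f)=\lim D_n^{1/n}\le(\deg f)^{1/2}$, i.e.\ the \emph{existence} of comparatively short connecting chains, $D_n\lesssim(\deg f)^{n/2}$; your sketch argues in the wrong direction (``$D_n$ bounded from below''), and in any case this inequality is a quoted Bonk--Meyer result rather than something to re-derive. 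Essential: the step you yourself flag as the ``main obstacle''---passing from the AC$_u(\kappa)$ hypothesis at the critical value $\kappa=-\frac14\log^2(\deg f)$ to the clean bound $D_n\ge c(\deg f)^{n/2}$ demanded by Theorem~\ref{main0}---is left open, and the bridge you propose does not work as stated: the converse clause of Theorem~\ref{acku} only produces visual metrics with expansion factors strictly below $e^{\sqrt{-\kappa}}=(\deg f)^{1/2}$, with multiplicative constants that degenerate as the factor approaches the endpoint, so propagating constants through that theorem again yields only $D_n\ge c(\Lambda)\Lambda^{n}$ for $\Lambda<(\deg f)^{1/2}$, i.e.\ the growth rate $\Lambda_0(f)=(\deg f)^{1/2}$ and not the required multiplicative lower bound.

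The paper closes this gap by bypassing the visual-metric correspondence entirely in that direction: it applies the defining inequality \eqref{acspaceeq}, with base point $\S^2$, directly to a minimal chain $X_1,\dots,X_{D_n}$ of $n$-tiles joining opposite sides of $\mathcal C$. Consecutive tiles in the chain intersect, so their graph distance is bounded (this is where the absence of periodic critical points enters, via the degree bound $N$ of Lemma~\ref{noperiodic}), giving $(X_{i-1},X_i)\ge n+1-\frac14N$; on the other hand $X_1$ and $X_{D_n}$ meet disjoint $0$-edges, so $\diam(X_1\cup X_{D_n})$ is bounded below and Lemma~\ref{pdiam} gives $(X_1,X_{D_n})\le C_0$ with $C_0$ independent of $n$. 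Substituting into \eqref{acspaceeq}, where $1/\sqrt{-\kappa}=1/\log(\deg f)^{1/2}$, yields $\log D_n\ge(n+C)\log(\deg f)^{1/2}$, hence $D_n\ge C'(\deg f)^{n/2}$, and Theorem~\ref{main0} then gives the Latt\`es conclusion. Because the additive constant $c$ of \eqref{acspaceeq} enters only additively in the exponent, it costs only a fixed multiplicative constant in $D_n$---precisely the sharpening your proposal is missing.
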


\begin{proof}
The first part follows directly from the definition of asymptotic upper curvature of $f$ and from Theorem \ref{acku} and Theorem \ref{expansionvisualsup}.

If $f$ is topologically conjugate to a Latt\`es map, then by Corollary \ref{cormain}, there exists a visual metric on $\S^2$ with respect to $f$ with expansion factor $\Lambda=\deg(f)^{1/2}$. By Proposition \ref{samevisual}, there exists a visual metric on $\partial_{\infty}\G$ in the sense of Gromov hyperbolic spaces with expansion factor $\Lambda=\deg(f)^{1/2}$.  By Theorem \ref{acku}, the Gromov hyperbolic space $\G$ is an AC$_u(\kappa)$-space with \[\kappa=-\frac14\log^2(\deg f).\]

Conversely, if $\G$ is an AC$_u(\kappa)$-space with
\[ \kappa= -\frac14\log^2(\deg f),\]
then for all $X,X'\in \G$ and all finite sequences $X_0=X,X_1,\ldots,X_n=X'$ in $\G$,
\begin{eqnarray}\label{acspaceeq1}
(X,X')\geq \min_{i=1,2,\ldots,n}(X_{i-1},X_i)-\frac{\log n}{\log(\deg f)^{1/2}}-c.
\end{eqnarray}

Let $D_n$ be the minimum number of $n$-tiles needed to join opposite sides of Jordan curve $\mathcal C$ as defined in \eqref{defdn}, for $n>0$. For $\#\post(f)\geq 4$, let $P_n=X_1\ldots X_{D_n}$ be an $n$-tile chain joining opposite sides of $\mathcal C$.
By the equation \eqref{acspaceeq1}, we have
\begin{eqnarray*}
(X_1,X_{D_n})\geq \min_{i=1,2,\ldots,D_n}(X_{i-1},X_i)-\frac{\log D_n}{\log(\deg f)^{1/2}}-c,
\end{eqnarray*}
so
\begin{eqnarray} \label{logdndegf}
\frac{\log D_n}{\log(\deg f)^{1/2}}\geq \min_{i=1,2,\ldots,D_n}(X_{i-1},X_i)-(X_1,X_{D_n})-c.
\end{eqnarray}

By equation \eqref{gp}, we have

\begin{equation} \label{XiXi}
\begin{aligned}
(X_{i-1},X_i) &=\frac12[\ell(X_{i-1})+\ell(X_{i})-\eta(X_{i-1},X_i)]+1 \\
&= \frac12[2n-\eta(X_{i-1},X_i)]+1\\
&\geq  n+ \frac12
\end{aligned}
\end{equation}
where $\eta(X_{i-1},X_i)=1$ since $\eta$ is the path metric, and $X_{i-1}$ and $X_i$ have nonempty intersection.
Applying equation \eqref{XiXi} and Lemma \ref{pdiam} to equation \eqref{logdndegf}, we have
\begin{eqnarray*}
\frac{\log D_n}{\log(\deg f)^{1/2}}&\geq &\min_{i=1,2,\ldots,D_n}(X_{i-1},X_i)-(X_1,X_{D_n})-c\\
&\geq& n+ \frac12 
+\log_{\Lambda}\big(\diam(X_1\cup X_{D_n})\big)-\log_{\Lambda}k-c, \\
\end{eqnarray*}
where $k\geq 1$ only depends on $f$ and $\mathcal C$ as in Lemma \ref{pdiam}, and $N>0$ only depends on $f$.
Let $d$ be the minimum length of a line segment joining opposite sides of the Jordan curve, then $d>0$ and
\[\diam(X_1\cup X_{D_n})\geq d. \]
So
\begin{eqnarray*}
\frac{\log D_n}{\log(\deg f)^{1/2}}
&\geq& n+ \frac12 
+\log_{\Lambda}(d)-\log_{\Lambda}k-c \\
&=& n+C.
\end{eqnarray*}
Here the constant
\[C=\frac12+\log_{\Lambda}(d)-\log_{\Lambda}k-c \]
only depends on $f$, $\mathcal C$ and $\Lambda$.
Hence, we have
\begin{eqnarray*}
{\log D_n}&\geq & (n+C) {\log(\deg f)^{1/2}}.
\end{eqnarray*}
Therefore, we have
\begin{eqnarray} \label{end}
{D_n}&\geq &   C'(\deg f)^{n/2},
\end{eqnarray}
where $C'=(\deg f)^{C/2}$  only depends on $f$ and $\mathcal C$.

When  $\#\post(f)=3$, the argument above holds with minor modifications. More specifically, consider a set $S$ of $D_n$ $n$-tiles joining opposite sides of $\mathcal C$. By a simple compactness argument, there exists a positive lower bound $d$ on the diameter of any set intersecting all three edges of $\mathcal C$.
Hence, we may choose $n$-tiles $X$ and $Y$ in $S$ that intersect two distinct $0$-edges and satisfy $\diam(X \cup Y) \geq d$. Then one can choose $P_n=X_1\ldots X_{r_n}$ to be an $n$-tile chain joining opposite sides of $\mathcal C$, such that $X_1 = X$, $X_{r_n} = Y$, $X_i \in S$ for $i =1, \ldots, r_n$, and $r_n<2D_n$.


By Theorem \ref{main0}, the map $f$ is topologically conjugate to a Latt\`es map.
\end{proof}

\newpage

\bibliographystyle{amsplain}
\bibliography{qian}
\end{document}